\newcommand{\rd}{{\rm d}}
\newcommand{\ri}{{\rm i}}
\newcommand{\rG}{{\rm G}}
\newcommand{\rI}{{\rm I}}
\newcommand{\bx}{{\bf x}}
\newcommand{\bR}{{\bf R}}
\newcommand{\cL}{\mathcal{L}}
\newcommand{\cS}{\mathcal{S}}
\newcommand{\sX}{\mathscr{X}}
\newcommand{\fa}{{\mathfrak a}}
\newcommand{\fg}{{\mathfrak g}}
\newcommand{\fh}{{\mathfrak h}}
\newcommand{\fl}{{\mathfrak l}}
\newcommand{\fm}{{\mathfrak m}}
\newcommand{\R}{\mathbb{R}}
\newcommand{\C}{\mathbb{C}}
\newcommand{\su}{\mathfrak{su}}
\newcommand{\gl}{\mathfrak{\gl}}
\newcommand{\fsl}{\mathfrak{sl}}
\newcommand{\SU}{{\rm SU}}
\newcommand{\GL}{\mathrm{GL}}
\newcommand{\Curl}{\mathrm{Curl}}
\newcommand{\Ad}{\mathrm{Ad}}
\newcommand{\Aut}{\mathrm{Aut}}
\newcommand{\Diff}{\mathrm{Diff}}
\newcommand{\End}{{\mathrm{End}}}
\renewcommand{\epsilon}{\varepsilon}
\newcommand{\Ric}{{\rm Ric}}
\newcommand{\ad}{\mathrm{ad}}
\renewcommand{\Re}{\mathop{\mathrm{Re}}}
\newcommand{\tr}{\mathop{\mathrm{tr}}\nolimits}
\newcommand{\vol}{\mathrm{vol}}
\newcommand{\gt}{\texorpdfstring{\mathrm{G}_2}{\space}}
\renewcommand{\div}{\mathrm{div}}
\newcommand{\der}{\mathrm{Der}}
\newcommand{\qandq}{\quad\text{and}\quad}
\newcommand{\qwithq}{\quad\text{with}\quad}
\newcommand{\qforq}{\quad \text{for} \quad}
\newcommand{\qwhereq}{\quad \text{where} \quad}
\def\<{\mathopen{}\left<}
\def\>{\right>\mathclose{}}
\def\({\mathopen{}\left(}
\def\){\right)\mathclose{}}
\definecolor{gold}{rgb}{0.85,.66,0}
\definecolor{cherry}{rgb}{0.9,.1,.2}
\definecolor{burgundy}{rgb}{0.8,.2,.2}
\definecolor{orangered}{rgb}{0.85,.3,0}
\definecolor{orange}{rgb}{0.85,.4,0}
\definecolor{olive}{rgb}{.45,.4,0}
\definecolor{lime}{rgb}{.6,.9,0}
\definecolor{green}{rgb}{.2,.7,0}
\definecolor{grey}{rgb}{.4,.4,.2}
\definecolor{brown}{rgb}{.4,.3,.1}
\newtheorem{theorem}{Theorem}
\newtheorem{prop}{Proposition}
\newtheorem{cor}{Corollary}
\newtheorem{lemma}[prop]{Lemma}
\numberwithin{substep}{step}
\numberwithin{subcase}{case}
\theoremstyle{remark}
\newtheorem{remark}{Remark}
\theoremstyle{definition}
\newtheorem{definition}{Definition}
\newtheorem{example}{Example}
\DeclareMathOperator{\rivarphi}{\ri_{\varphi}}
\DeclareMathOperator{\ripsi}{\ri_{\psi}}
\DeclareMathOperator{\Gl}{Gl}
\newcommand{\Addresses}{{
  \bigskip
  \footnotesize

  \textsc{IMECC, University of Campinas, Campinas - São Paulo, Brazil}\par\nopagebreak
 \quad A.~Moreno: \texttt{andres.moreno@ime.unicamp.br} 

  \bigskip

  \textsc{Federal University of Cear{\'a}, Fortaleza - Cear{\'a}, Brazil}\par\nopagebreak
  \quad\quad J.~Saavedra: \texttt{julieth.p.saavedra@gmail.com}
 
}}
\numberwithin{equation}{section}
\numberwithin{theorem}{section}
\numberwithin{prop}{section}
\numberwithin{cor}{section}
\numberwithin{definition}{section}
\numberwithin{remark}{section}
\numberwithin{example}{section}
\begin{document}
	\title{On the Laplacian coflow of invariant $\gt$-structures and its solitons}

 \author{Andrés J. Moreno and Julieth Saavedra}

  \date{\today}

\maketitle
	
\vspace{-0.5cm}	
\begin{abstract}
In this work, we approach the Laplacian coflow of a coclosed $\gt$-structure $\varphi$ using the formulae for the irreducible $\gt$-decomposition of the Hodge Laplacian and the Lie derivative of the Hodge dual $4$-form of $\varphi$. In terms of this decomposition, we characterise the conditions for a vector field as an infinitesimal symmetry of a coclosed $\gt$-structure, as well as the soliton condition for the Laplacian coflow. More specifically, we provide an easier proof for the absence of compact shrinking solitons of the Laplacian coflow. Moreover, we revisit  the Laplacian coflow of coclosed $\gt$-structures on almost Abelian Lie groups addressed by Fino-Bagaglini \cite{Fino2018}. However, our approach is based on  the bracket flow point of view. Notably, by showing that the norm of the Lie bracket is strictly decreasing, we prove that we have long-time existence for any coclosed Laplacian coflow solution. 
\end{abstract}
	
\begin{adjustwidth}{0.95cm}{0.95cm}
    \tableofcontents
\end{adjustwidth}

\section{Introduction}

A $\gt$-structure is defined by a positive $3$-form $\varphi$, which, in turn, defines the metric $g$ and the corresponding Hodge dual $4$-form $\psi:=\ast\varphi$. The main goal in $\gt$-geometric is the study of \emph{torsion free} $\gt$-structures, i.e. $\nabla\varphi=0$, which is equivalent to the \emph{closed} $\rd\varphi=0$ and the \emph{coclosed} condition $\rd\psi=0$ (e.g \cite{Fernandez1982}). Using Ricci flow ideas,  Bryant introduced the \emph{Laplacian flow} of closed  $\gt$-structures \cite{Bryant2006}, which is an evolution of an initial closed $\gt$-structure along its Hodge Laplacian, namely
\begin{equation}\label{eq:Laplacian_flow}\frac{\partial\varphi(t)}{\partial t}=\Delta_{t}\varphi(t),
\quad \varphi(0)=\varphi. 
\end{equation}
The Laplacian flow is not parabolic, however, when the initial condition is closed, the flow \eqref{eq:Laplacian_flow} preserves the closed condition and it evolves as a Ricci-like flow on $\Omega^3$. It allows using DeTurck's trick and, then, the Laplacian flow becomes parabolic in the direction of closed forms. In  \cite{Bryant2011}, Bryant and Xu addressed this approach in order to prove the short time existence of \eqref{eq:Laplacian_flow}.

Motivated by Bryant and Xu ideas on the Laplacian flow of closed $\gt$-structures, Karigiannis, McKay and Tsui introduced the \emph{Laplacian coflow}  of coclosed $\gt$-structures in \cite{Karigiannis2012}. It means that, instead of considering the heat flow equation for $\varphi$, they deal with the flow: 
\begin{equation}\label{eq:Laplacian_coflow_intro}
    \frac{\partial\psi(t)}{\partial t}=\Delta_{t}\psi(t), \quad \psi(0)=\psi.
\end{equation}
Equally to the Laplacian flow, if the initial condition satisfies $\rd\psi=0$, the flow \eqref{eq:Laplacian_coflow_intro} preserves the coclosed condition. 
On one side, the Laplacian coflow is interesting, because coclosed $\gt$-structures exist in any (compact and non-compact) spin and orientable $7$-manifold by a parametric $h$-principle 
(see \cite{Crowley2015a}). 
Unfortunately, the analytic approach employed for the Laplacian flow does not apply in the case  \eqref{eq:Laplacian_coflow_intro}, since it is not parabolic in the direction of the coclosed forms. Hence, the short-time  existence of the Laplacian coflow is still an open problem. 
Nevertheless, in \cite{Grigorian2013}, Grigorian proposed a modification of \eqref{eq:Laplacian_coflow_intro} fixing the failure of the Laplacian coflow to be parabolic, specifically the \emph{modified Laplacian coflow} of coclosed $\gt$-structures is the evolution given by
\begin{equation}\label{Eq: modified Laplacian coflow}
    \frac{\partial\psi}{\partial t}=\Delta_{t}\psi(t)+2\rd((A-\tr_{g(t)} T(t))\varphi(t)), \qforq A>0.
\end{equation}
However, the critical points of \eqref{Eq: modified Laplacian coflow} are no longer torsion-free $\gt$-structures. For instance, if $\varphi$ is a \emph{nearly parallel} $\gt$-structure, i.e. $\rd\varphi=4\psi$, it vanishes the left hand side of \eqref{Eq: modified Laplacian coflow} for $A=5$. So, despite the fact that the modified Laplacian coflow can be seen as a tool for improving the torsion of $\varphi$, it does not search only for the torsion-free ones.

Regardless of the absence of an analytical theory of the Laplacian coflow in the general setting, the flow \eqref{eq:Laplacian_coflow_intro} had received the attention of some authors for manifolds with either a symmetry or an additional geometrical structure. For instance:

Assuming short time existence and uniqueness of \eqref{eq:Laplacian_coflow_intro}, in \cite{Karigiannis2012}, Karigiannis, McKay and Tsui studied soliton solutions on warped products of a circle or an interval with a compact $6$-manifold $N$ with $\SU(3)$-structure $(\omega,\Re(\Omega))$. Running the Laplacian coflow among cohomogeneity-one solutions, when $(N,\omega,\Re(\Omega))$ is a Calabi-Yau manifold, they proved that the unique soliton solutions on the warped product are the steady ones. In particular, in the compact case, the soliton solutions are given by translations and phase rotations of the standard torsion-free $\gt$-structure. 


On Sasakian manifolds $(M,\xi,\eta,\Phi)$ with a contact Calabi-Yau structure $(\omega:=\rd\eta,\Re(\Upsilon))$ in \cite{lotay2022CCY}, Lotay, Sá Earp
and Saavedra proved the existence of a family of $\gt$-structures by solving the Laplacian coflow, choosing $\epsilon\in \R^*$ and initial data $\varphi=\epsilon\eta\wedge\omega+\Re(\Upsilon)$, which is coclosed and the solution exists in $t\in(-\frac{1}{10\epsilon^2}, \infty)$. Hence, the solution of the Laplacian coflow is immortal with a finite singularity at $t=-\frac{1}{10\epsilon^2}$. It was the first example of compact solution to the Laplacian coflow which had an infinite time type $IIB$ singularity.

On $3$-Sasakian manifolds, there exists two non-equivalent nearly parallel $\gt$-structures  \cite{Friedrich1997}. Moreover, using the natural $\SU(2)$-action, there is a $4$-parameter family of coclosed $\gt$-structures (up to sign), which contains the nearly parallel ones. Under a special Ansatz of this family of coclosed $\gt$-structures, Kennon and Lotay proved that any solution of the Laplacian coflow starting at a coclosed $\gt$-structure converges, after rescaling, to one of the nearly parallel $\gt$-structure in the same family of the initial data  \cite{Lotay2022}. In particular, the nearly parallel $\gt$-structures are both stable within their families.


In the other hand, when $M=G/H$ is a homogeneous space and the solutions of \eqref{eq:Laplacian_coflow_intro} are required to be $G$-invariant, the Laplacian coflow becomes an ordinary differential equation. Namely, let $\fg$ and $\fh$ be the Lie algebras of $G$ and $H$ respectively, and $\fg=\fh\oplus\fm$ a \emph{reductive decomposition} (i.e. $\Ad(H)$-invariant), any $G$-invariant solution of \eqref{eq:Laplacian_coflow_intro} on $M$ is determined by an $\Ad(K)$-invariant $4$-form $\psi(t)$ on $\fm\simeq T_oM$ (where $o=1_GH$). Then, since $\Delta\psi$ is invariant by diffeomorphisms of $M$, the flow \eqref{eq:Laplacian_coflow_intro} restricted to $G$-invariant solutions is equivalent with:
\begin{align}\label{eq: invariant_Laplacian_coflow}
    \frac{d}{dt}\psi(t)=\Delta_{\psi(t)}\psi(t) \qforq \psi(t)\in \left(\Lambda^4\fm^*\right)^{\Ad(H)}.
\end{align}
Hence, short time existence and uniqueness of \eqref{eq: invariant_Laplacian_coflow} are followed by the well known ODE arguments, since the linear map $\Delta$ on $\Lambda^4\fm^*$ is continuous. For instance, in \cite{kath2021}, Kath and Lauret obtained expanding solitons and immortal solutions of the Laplacian coflow when $M$ is the connected and simply connected Lie group with Lie algebra $\fa\ltimes \R^4$, where $\fa$ is any maximal $\R$-split torus of $\fsl(\R^4)$. The latest have been obtained using the bracket flow approach (see \cite{Lauret2016} for a deep exposition of this method). Conversely, using a direct method, Bagaglini, Fernández and Fino obtained explicit immortal solutions of \eqref{eq: invariant_Laplacian_coflow} when $M$ is the $7$-dimensional Heisenberg group \cite{bagaglini2020}. In \cite{Fino2018}, Bagaglini and Fino gave explicit immortal solutions and solitons of the Laplacian coflow for a subclass of almost Abelian Lie groups. 

In this work, we study the Laplacian coflow of invariant coclosed $\gt$-structures. Intending to proceed with this, in Section \ref{Sec: preliminaries}, we provide some preliminaries on coclosed $\gt$-structures to establish the notation that is going to be used for the rest of the paper. In Section \ref{Sec: Laplacian coflow}, we recall the definition of the Laplacian coflow of coclosed $\gt$-structures and its soliton solutions. Specifically for the parameter $\lambda\in \R$ and the vector field $X\in \sX(M)$, such that $\varphi$ satisfies the soliton equation \eqref{Eq.Soliton}. We characterise the soliton condition in terms of the full torsion tensor $T$ of $\varphi$ by
\begin{equation}\label{eq: soliton_eq_intro}
    \div T=-\frac{1}{2}(\Curl X)^\flat+X\lrcorner T \qandq
    -\Ric+\frac{1}{2}T\circ T+(\tr T)T=\frac{\lambda}{4}g+\frac{1}{2}\cL_X g.
\end{equation}
where  $T\circ T$ and $\div T$ are defined in \eqref{eq:products}, and \eqref{eq:div.curl}, respectively. $\Ric$ denotes the Ricci curvature induced by $\varphi$ and $(\Curl X)_c=\nabla_aX_b{\varphi^{ab}}_c$ denotes the curl of $X$. As an application of \eqref{eq: soliton_eq_intro}, we give in Corollary \ref{cor: non-compact_shrinking_soliton} an alternative proof for the non-existence of compact shriking solitons of the Laplacian coflow.  Finally, in Section \ref{Sec: almost abelian}, we address the Laplacian coflow of invariant coclosed $\gt$-structures on almost Abelian Lie groups $G_A$, with Lie algebra $\fg$ and Lie bracket $A\in \fg\fl(\R^6)$. Using the bracket flow, we write the Laplacian coflow \eqref{eq: invariant_Laplacian_coflow} in terms of the Lie bracket as
\begin{align}\label{eq: bracket flow introduction}
	\frac{d}{dt}A=&-\Big(\frac{1}{2}\tr(S_A)^2+\frac{1}{4}(\tr JA)^2\Big)A+\frac{1}{2}[A,[A,A^t]]+\frac{1}{2}[A,S_A\circ_6S_A],
	\end{align}
where $S_A$ denotes the symmetric part of $A$, $J$ is the canonical almost complex structure on $\R^6$ and the product $S_A\circ_6S_A$ is defined in \eqref{TcircT}. Hereby, we are able to prove:
\begin{theorem}\label{inmortal solution}
   Let $\fg$ be an almost Abelian Lie algebra with Lie bracket $A\in \fg\fl(\R^6)$ and coclosed (non-flat) $\gt$-structure $\varphi$. Then, the solution $\varphi(t)$  of the Laplacian coflow starting at $\varphi$   is immortal, i,e, it is defined for all $t\in (\varepsilon_1,\infty)$.
\end{theorem}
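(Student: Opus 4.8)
The plan is to work entirely at the level of the bracket flow \eqref{eq: bracket flow introduction}, which by the bracket-flow formalism of Lauret \cite{Lauret2016} shares its maximal interval of existence with the Laplacian coflow solution $\varphi(t)$; it therefore suffices to prove that the bracket $A(t)$ cannot blow up in finite positive time. The engine is the monotonicity of the squared norm $|A|^2=\tr(A^tA)$ announced in the abstract. First I would differentiate along the flow,
\begin{equation*}
\tfrac{1}{2}\tfrac{d}{dt}|A|^2=\langle A,\dot A\rangle
=-\Big(\tfrac{1}{2}\tr(S_A)^2+\tfrac{1}{4}(\tr JA)^2\Big)|A|^2
+\tfrac{1}{2}\langle A,[A,[A,A^t]]\rangle
+\tfrac{1}{2}\langle A,[A,S_A\circ_6 S_A]\rangle,
\end{equation*}
and then treat the three summands separately.

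The first summand is manifestly non-positive, since $\tr(S_A)^2\ge 0$, $(\tr JA)^2\ge 0$ and $|A|^2\ge 0$. For the second, I would use an elementary trace manipulation together with the fact that $C:=[A,A^t]$ is symmetric: expanding $\langle A,[A,C]\rangle=\tr\big(A^t(AC-CA)\big)$ and applying cyclic invariance of the trace gives $\tr\big((A^tA-AA^t)C\big)=-\tr(C^2)=-|[A,A^t]|^2\le 0$. Hence the first two terms contribute the non-positive quantity $-\big(\tfrac{1}{2}\tr(S_A)^2+\tfrac{1}{4}(\tr JA)^2\big)|A|^2-\tfrac{1}{2}|[A,A^t]|^2$.

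The remaining and genuinely delicate summand is the cross term $\tfrac{1}{2}\langle A,[A,S_A\circ_6 S_A]\rangle=-\tfrac{1}{2}\langle [A,A^t],S_A\circ_6 S_A\rangle$ (again using that $S_A\circ_6 S_A$ is symmetric and the same trace manipulation). This is where I expect the main obstacle to lie: unlike the first two terms its sign is not formal, and to control it I would insert the explicit definition of $\circ_6$ from \eqref{TcircT} together with the structural constraints that the coclosed $\gt$-condition places on $A\in\fg\fl(\R^6)$ in the almost Abelian setting. The objective is to show that this cross term is either itself non-positive or dominated by the negative contribution $-\tfrac{1}{2}|[A,A^t]|^2$, so that in total $\tfrac{d}{dt}|A|^2\le 0$, with strict inequality whenever $\varphi$ is non-flat.

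Granting the monotonicity, the conclusion is soft. The bound $|A(t)|\le |A(t_0)|$ confines the trajectory to the compact ball of radius $|A(t_0)|$ for all $t\ge t_0$; since the right-hand side of \eqref{eq: bracket flow introduction} is polynomial in $A$, hence smooth and bounded on this ball, the standard escape-time/continuation criterion for ODEs rules out a finite-time forward singularity. Therefore the maximal forward interval is $[t_0,\infty)$, and by the equivalence of the two flows the Laplacian coflow solution $\varphi(t)$ persists up to $t=+\infty$, i.e.\ it is immortal.
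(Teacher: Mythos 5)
Your overall strategy is the same as the paper's: pass to the bracket flow via Lauret's equivalence, show $|A|^2$ is non-increasing, and conclude by the standard ODE continuation criterion. Your treatment of the first two summands is correct and matches the computation in Proposition \ref{evolution_of_the_norm}. However, there is a genuine gap at exactly the point you flag as ``the main obstacle'': the cross term $-\tfrac{1}{2}\langle S_A\circ_6 S_A,[A,A^t]\rangle$ is never actually estimated. You state the \emph{objective} of showing it is non-positive or dominated by $-\tfrac{1}{2}|[A,A^t]|^2$, but you do not carry this out, and this is the only non-routine step of the whole proof. Worse, the route you sketch would not succeed as stated: the term is not non-positive in general, and it is not dominated by the commutator term alone.

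What the paper actually does is the following. Applying Cauchy--Schwarz and Peter--Paul with the specific constant $\tfrac14$ (as in \eqref{CSPP-inequality}) gives $-\langle S_A\circ_6S_A,[A,A^t]\rangle\le \tfrac14|S_A\circ_6S_A|^2+|[A,A^t]|^2$; the second piece exactly cancels $-|[A,A^t]|^2$, and the first piece is controlled by the nontrivial algebraic identity of Lemma \ref{S_circ_S}, namely $|S_A\circ_6 S_A|^2=4\bigl(|S_A|^4-2|S_A^2|^2-\langle JS_A,S_A\rangle^2\bigr)$, whose proof requires the $\SU(3)$ contraction identities \eqref{SU3_identities} and \eqref{eq: rho+rho+}. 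The leftover $|S_A|^4$ is then absorbed not by the commutator term but by the \emph{first} summand, using $|A|^2=|S_A|^2+\tfrac14|A-A^t|^2\ge|S_A|^2$, which yields
\begin{equation*}
\frac{d}{dt}|A|^2\le -\tfrac14|S_A|^2|A-A^t|^2-\tfrac12(\tr JA)^2|A|^2-2|S_A^2|^2-\langle JS_A,S_A\rangle^2\le 0 .
\end{equation*}
So the missing ingredients are the norm identity for $S_A\circ_6S_A$ and the observation that the quadratic term $-|S_A|^2|A|^2$, rather than $-|[A,A^t]|^2$, is what soaks up the Peter--Paul remainder. Your closing ``soft'' argument (bounded trajectory plus polynomial right-hand side implies forward completeness) is fine once the monotonicity is in hand.
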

 In spite of not obtaining explicit solutions of \eqref{eq: invariant_Laplacian_coflow} as it has been done in \cite{Fino2018} for a subclass of almost Abelian Lie algebras, the Theorem \eqref{inmortal solution} generalise the result of long-time existence of solutions for any almost Abelian Lie algebra. Moreover, the ODE \eqref{eq: bracket flow introduction} allows to study the dynamical behavior of the $2$-parameter family 
 \begin{equation*}
	    A=\left[
	    \begin{array}{c|c}
	    B & 0 \\ \hline
	    0 & -B^t
	    \end{array}
	    \right] \qwithq B=\left[\begin{array}{ccc}
	    0 & x & 0 \\ 
	    y & 0 & 0 \\
	    0 & 0 & 0
	    \end{array}
	    \right] \qandq x,y\in\R
	\end{equation*}
 showing in Proposition \ref{Prop. Example} that the Laplacian coflow is stable.
 To conclude, we study the invariant solitons of the Laplacian coflow in terms of the Lie brackets $A\in \fg\fl(\R^6)$, satisfying the time independent equation (see Theorem \ref{alge_soliton_prop})
\begin{equation}\label{eq:bracket soliton intro}
		[A,A^t]+S_A\circ_6 S_A=-\Big(\tr S_A^2-\frac{1}{2}(\tr JA)^2+2d\Big)I_6+(D+D^t)|_{\bR^6},
 	\end{equation}
    where $D$ is a derivation of $(\fg,A)$ and
    $$
      d=\frac{|[A,A^t]|^2+\langle S_A\circ_6 S_A, [A,A^t]\rangle}{2|A|^2}.
    $$
    As an application of \eqref{eq:bracket soliton intro}, firstly we prove if $A$ is skew-symmetric, then $(\fg,A,\varphi)$ defines an invariant soliton of the Laplacian coflow (see Corollary \ref{skew_sym_algebraic_soliton}). Secondly, we prove that any invariant (non-flat) soliton on an almost Abelian Lie group is an expanding solition (Proposition \ref{prop_absences_solitons_steady}). Finally, as far as we know it, we provide the first example of a semi-algebraic soliton of the Laplacian coflow, which is no algebraic (Example \ref{ex: semi_algebraic_example}).
 
\subsubsection*{Note}
Fino and Bagaglini \cite{Fino2018} have substantial overlap with this paper. However, while a number of conclusions are similar, the point of view on the Laplacian coflow is different. In this paper, we use the bracket flow introduced by Lauret in \cite{Lauret2016}, while in \cite{Fino2018}, a more traditional geometric flow approach is used. Both approaches are valuable and complementary, since they provide different perspectives on the same phenomenon. Since we are studying the same flow in the same space, we want to emphasise that this paper has different techniques, and both papers will give a better understanding of the Laplacian coflow.   
    
\subsubsection*{Notation}
 Let $(M, g)$ be a smooth oriented Riemannian $7$-manifold. We use the Einstein summation convention throughout. We compute in a local orthonormal frame, so all indices are subscripts and any repeated indices are summed over all values from $1$ to $7$.
 A differential $k$-form $\alpha$ on $M$ will be written as
$$
\alpha=\frac{1}{k !} \alpha_{i_1 i_2 \cdots i_k} \mathrm{~d} x^{i_1} \wedge \mathrm{d} x^{i_2} \wedge \cdots \wedge \mathrm{d} x^{i_k}
$$
in local coordinates $\left(x^1, \ldots, x^7\right)$, where $\alpha_{i_1 i_2 \cdots i_k}$ is completely skew-symmetric in its indices. With this convention, the interior product $\left.\partial_m\right\lrcorner \alpha$ of $\alpha$ with a coordinate vector field $\partial_m$ is the $(k-1)$-form
$$
\left.\partial_m\right\lrcorner \alpha=\frac{1}{(k-1) !} \alpha_{m i_1 i_2 \cdots i_{k-1}} \mathrm{~d} x^{i_1} \wedge \mathrm{d} x^{i_2} \wedge \cdots \wedge \mathrm{d} x^{i_{k-1}}.
$$
The metric $g$ on a Riemannian manifold $M$ induces a metric on $k$-forms, such that the inner product of $\alpha$ and $\beta$ is
$$
g(\alpha, \beta)=\frac{1}{k !} \alpha_{i_1 \cdots i_k} \beta_{j_1 \ldots j_k} g^{i_1 j_1} \ldots g^{i_k j_k}.
$$
The Levi-Civita connection associated to $g$ is denoted by $\nabla$, and its Christoffel symbols by $\Gamma_{i j}^k$. We write $\nabla_i$ for covariant differentiation in the $\partial_i$ direction. If $T_{i_1 \cdots i_k}$ is a tensor of type $(0, k)$, then $\nabla_m T_{i_1 \cdots i_k}$ always means $\left(\nabla_m T\right)_{i_1 \cdots i_k}$. We write the exterior derivative $\mathrm{d} \alpha$ of a $k$-form $\alpha$ as
$$
\mathrm{d} \alpha=\frac{1}{k !}\left(\nabla_m \alpha_{i_1 \cdots i_k}\right) \mathrm{d} x^m \wedge \mathrm{d} x^{i_1} \cdots \wedge \mathrm{d} x^{i_k}
$$
in terms of the covariant derivative.
The metric $g$ defines an isomorphism between $TM$ and $T^*M$ (raising and lowering indices.) If $v$ is a vector field, then the metric dual 1-form $v^\flat$ is defined by $v^\flat(w)=g(v, w)$. In coordinates, $\left(\partial_i\right)^\flat=g_{i k} \mathrm{~d} x^k$. Similarly, the 1-form $\alpha$ has a metric dual vector field $\alpha^{\sharp}$, and $\left(\mathrm{d} x^i\right)^{\sharp}=g^{i k} \partial_k$.

We use '$\vol$' to denote the volume form on $M$ associated to the metric $g$ and an orientation. The Hodge star operator $*$ taking $k$-forms to $(7-k)$-forms is defined by
$$
\alpha \wedge * \beta=g(\alpha, \beta) \mathrm{vol}
$$
 Our convention for labelling the Riemann curvature tensor is
$$
R_{i j k m} \frac{\partial}{\partial x^m}=\left(\nabla_i \nabla_j-\nabla_j \nabla_i\right) \frac{\partial}{\partial x^k}
$$
in terms of coordinate vector fields. With this convention, the Ricci tensor is $R_{j k}=$ $R_{l j k l}$ and the first Bianchi identity of the Riemann curvature tensor is:
\begin{equation}\label{eq: 1-bianchi identity}
    R_{abmn}+R_{amnb}+R_{anbm}=0.
\end{equation}
 %
 We use $\Gamma(E)$ to denote the space of smooth sections of $E$. As special instances, we denote  the following cases as:
\begin{itemize}
    \item $\Omega^k:=\Gamma\left(\Lambda^k\left(T^* M\right)\right)$ is the space of smooth $k$-forms on $M$;
    \item $\mathcal{S}:=\Gamma\left(\mathrm{S}^2\left(T^* M\right)\right)$ is the space of smooth symmetric 2-tensors on $M$.
    \item $\sX(M):=\Gamma(TM)$ the space of vector fields.
\end{itemize}
With respect to the metric $g$ on $M$, we use $\mathcal{S}_0$ to denote those sections $h$ of $\mathcal{S}$ that are traceless. That is, $\mathcal{S}_0$ consists of those sections of $\mathcal{S}$, such that $\operatorname{Tr} h=g^{i j} h_{i j}=0$ in local coordinates. Then $\mathcal{S} \simeq \Omega^0 \oplus \mathcal{S}_0$, where $h \in \mathcal{S}$ is decomposed as $h=\frac{1}{7}(\operatorname{Tr} h) g+h_0$. Then, we have $\Gamma\left(T^* M \otimes T M\right)=\Omega^0 \oplus \mathcal{S}_0 \oplus \Omega^2$, where the splitting is pointwise orthogonal with respect to the metric on $T^* M \otimes T M$ induced by $g$. 

\smallskip 

\noindent {\bf Acknowledgements:} The authors would like to thank Jorge Lauret for
introducing them to the idea of the bracket flow in this context, and also the Universidad Nacional de Cordoba for hosting that conversation in 2019. Also, we are grateful to  Henrique S{\'a} Earp for the meaningful discussions and advises. This work stems on the MATHAMSUD Regional Program 21-MATH-06 collaborations.  AM was funded by the Sao Paulo Research Foundation (Fapesp) [2021/08026-5] and JPS was supported by the Coordination for the Improvement of Higher Education Personnel-Brazil (CAPES) [88887.648550/2021-00].

\section{Preliminaries} \label{Sec: preliminaries}
In this section we collect some results related to   $\gt$-structures that will be needed in the present paper. Any result of this section can be found in \cite{Karigiannis2007, Grigorian2013, Bryant2006}. 
\subsection{$\gt$-structures and their torsion } A \emph{$\gt$-structure} on a $7$-manifold $M$ is given by a differential $3$-form $\varphi$ on $M$, which is pointwise isomorphic to  the $3$-form 
$$ 
\varphi_0
=e^{123}+e^{145}+e^{167}+e^{246}-e^{257}-e^{347}-e^{356}
\in\Lambda^3(\R^7)^*,
$$ 
where  $e^{ijk}=e^i\wedge e^j\wedge e^k$ and $\{e^1,\dots,e^7\}$ is the dual basis of the canonical basis of $\R^7$. 
The $\gt$-structure $\varphi$ determines a Riemannian metric $g_{\varphi}$ and a volume form $\vol_{\varphi}$ so that
$$
	6g_\varphi(X,Y)  \vol_{\varphi}
	=(X\lrcorner \varphi)\wedge (Y\lrcorner\varphi)\wedge\varphi \qforq X,Y\in \sX(M).
$$
In addition, $\varphi$ induces a Hodge star operator $\ast_{\varphi}$ and we denote its dual $4$-form by $\psi=\ast_{\varphi}\varphi$.    For simplicity, we will write $g=g_{\varphi}$ and $\ast=\ast_\varphi$.  A $\gt$-structure gives rise to a decomposition of the space of differential $k$-forms $\Omega^k$ on $M$ into irreducible $\gt$-submodules. For instance,
\begin{align}
\label{eq:form.decomp}
	\Omega^2 &= \Omega^2_7\oplus\Omega^2_{14}\qandq
	\Omega^3  = \Omega^3_1\oplus\Omega_{7}^{3}\oplus\Omega^3_{27},
\end{align}
where $\Omega^k_l$ has (pointwise) dimension $l$. 
In \cite{Bryant2006}, R. Bryant defines an injective map $\ri_{\varphi}:\cS^2\to\Omega^3$, given in local coordinates $x^1,\dots x^7$ by 
	\begin{align}
	   	\ri_{\varphi}(h)&=\frac{1}{3!}\rivarphi(h)_{ijk}\rd x^{ijk}=\frac{1}{3!}(h_i^m\varphi_{mjk}+h_j^m\varphi_{imk}+h_k^m\varphi_{ijm})\rd x^{ijk}
	   	,\label{Eq:i.operator} 
	\end{align}
	where  $h\in \cS^2$ is a symmetric $2$-tensor field on $M$. 
  Additionally, the map $\rivarphi$ is surjective on $\Omega^3_1\oplus\Omega^3_{27}$ and its Hodge dual satisfies (e.g. \cite[Proposition 2.8]{Karigiannis2007})
    \begin{equation}\label{eq:i_psi}
        \ast\ri_{\varphi}(h)=\frac{1}{4!}(\Bar{h}_i^m\psi_{mjkl}+\Bar{h}_j^m\psi_{imkl}+\Bar{h}_k^m\psi_{ijml}+\Bar{h}_l^m\psi_{ijkm})\rd x^{ijkl}=:\ripsi(\Bar{h}),
    \end{equation}
    where $\Bar{h}=\frac14\tr(h)g-h$. In particular, for any trace-free symmetric 2-tensors $h\in S_0^2$, we have  $\ri_{\varphi}(h)\in\Omega^3_{27}$ and $\ripsi(h)\in \Omega^4_{27}=\ast\left(\Omega^3_{27}\right)$.
According with the $\gt$-decomposition of $\Omega^4$ and $\Omega^5$, the exterior derivative of $\varphi$ and $\psi$ are completely described in term of the \emph{torsion forms} $\tau_0\in\Omega^0$, $\tau_1\in\Omega^1$,  $\tau_2\in\Omega_{14}^2$ and $\tau_3\in\Omega^3_{27}$,  given by (see ~\cite{Bryant2006}*{Proposition 1})
\begin{align}\label{eq: Fernandez dpsi}
  	\rd\varphi 
  	=\tau_0\psi+ 3\tau_1\wedge\varphi+\ast\tau_3  \in  \Omega_1^4\oplus\Omega^4_7\oplus\Omega^4_{27} \qandq
	\rd\psi 
=4\tau_1\wedge\psi+\tau_2\wedge\varphi \in  \Omega^5_7\oplus\Omega^5_{14}.  
\end{align}
Moreover, for the \emph{full torsion tensor} $T$ of $\varphi$, which is defined locally by (see \cite{Karigiannis2007})
\begin{equation}
\label{Eq.nabla.varphi}
	\nabla_i\varphi_{jkl}
	=T_i^m\psi_{mjkl},
\end{equation}
we may use the torsion forms to relate them with $T$, by
\begin{equation}
\label{Eq:Torsion}
	T =\frac{\tau_0}{4}g -\tau_{27}-\tau_1^{\sharp}\lrcorner\varphi -\frac{1}{2}\tau_2 ,
\end{equation}
where $\tau_{27}$ is the trace-free symmetric $2$-tensor satisfying $\tau_3=\rivarphi(\tau_{27})$ and $\tau_1^\sharp$ denotes the unique vector field induced by the Riemannian metric $g$, (i.e. $g(\tau_1^\sharp,X)=\tau_1(X)$ for any $\in \sX(M)$). In addition, from \eqref{Eq.nabla.varphi} for the $4$-form $\psi$, we have
\begin{equation}\label{eq: nabla.psi}
    \nabla_m\psi_{ijkl}=-(T_{mi}\varphi_{jkl}-T_{mj}\varphi_{ikl}-T_{mk}\varphi_{jil}-T_{ml}\varphi_{jki}).
\end{equation}
	
\subsection{Properties of coclosed $\gt$-structures}
A $\gt$-structure $\varphi$ is \emph{coclosed} if it satisfies $\rd\psi=0$, in terms of \eqref{eq: Fernandez dpsi} the coclosed condition is equivalent with $\tau_1=0$ and $\tau_2=0$. Hence, the full torsion tensor of a coclosed $\gt$-structure simplifies to the symmetric $2$-tensor
\begin{equation}
\label{eq:torsion.coclosed}
    T=\frac{\tau_0}{4}g -\tau_{27} 
    \in \cS^2.   
\end{equation}
In addition, $\rd\varphi\in \Omega^4_1\oplus\Omega^4_{27}$ thus, by \eqref{Eq:i.operator}, \eqref{eq: Fernandez dpsi} and \eqref{eq:torsion.coclosed}, we have
\begin{equation}\label{eq: dvarphi_i_h}
    \rd\varphi=\ast\rivarphi\left(\frac{1}{3}(\tr T)g-T\right).
\end{equation}
The following proposition include some well known identities of coclosed $\gt$-structures given in \cite{Grigorian2013}, obtained  as consequence of a general formulae of the exterior derivative of a generic $3$-form. Here, we give an alternative proof of those identities, using the called \emph{$\gt$-Bianchi type identity}
\begin{equation}\label{eq: Bianchi-type identity}
    \nabla_iT_{jk}-\nabla_jT_{ik}=\left(\frac{1}{2}R_{ijmn}-T_{im}T_{jn}\right){\varphi_k}^{mn},
\end{equation}
where $T_{ij}$ is the coordinate of \eqref{eq:torsion.coclosed} and $R_{ijmn}$ denotes the Riemann curvature tensor. We remark that the identity \eqref{eq: Bianchi-type identity} can be read as the infinitesimal version of the diffeomorphism invariance of $T$ as a function $\varphi$. (see \cite{Karigiannis2007}*{Section 4} for an extensive discussion in the $\gt$-case and \cite{fadel2022} for any $H$-structure.). In the statement, for any $h,k\in \cS^2$, we denote the inner product $\langle h,k\rangle$ and the circ product $h\circ k\in \cS^2$ by
\begin{align}
\label{eq:products}
    \langle h,k\rangle = h_{ij}k_{ab}g^{ia}g^{jb}\qandq (h\circ k)_{ab}=\varphi_{amn}\varphi_{bpq}h^{mp}k^{nq}.
\end{align}
As well as the divergence and the curl of $h$, given in coordinates by
\begin{align}\label{eq:div.curl}
    \div h_a=\nabla_bh_a^b\qandq \Curl h_{ab}=\nabla_m h_{an}\varphi_b^{\,\,mn}. 
\end{align}

\begin{prop} 
Let $\varphi$ be a coclosed $\gt$-structure with full torsion tensor $T$, then the divergence and the curl of $T$ satisfy we have 
\begin{equation}\label{eq: Laplacian.Grigorian}
        \div T_a= \nabla_a\tr T \qandq \Curl T_{ab}=\Curl T_{ba}.
\end{equation}
In addition, the Ricci tensor and the scalar curvature are
\begin{equation}\label{eq: Ricci_formula}
      \Ric =-\Curl T-T^2+(\tr T)T \qandq
        R=(\tr T)^2-|T|^2.
\end{equation}
\end{prop}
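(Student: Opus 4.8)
The plan is to derive all four identities purely by contracting the $\gt$-Bianchi type identity \eqref{eq: Bianchi-type identity}, repeatedly exploiting two elementary cancellations: (i) any contraction of the \emph{symmetric} tensor $T$ against $\varphi$ over a pair of indices in which $\varphi$ is antisymmetric vanishes, and (ii) any contraction of $R_{ijkl}$ over three indices against a totally antisymmetric tensor (such as $\varphi$ or $\psi$) vanishes when those three are the last (or, equivalently by pair symmetry, the first) three slots of $R$, by the first Bianchi identity \eqref{eq: 1-bianchi identity}.

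First I would prove $\div T_a=\nabla_a\tr T$ by tracing \eqref{eq: Bianchi-type identity} over $j,k$ (contracting with $g^{jk}$). Using that $T$ is symmetric, the left side becomes $\nabla_i\tr T-\div T_i$. On the right, the curvature term $\tfrac12 R_{ijmn}\varphi_{jmn}$ vanishes by (ii), since $\varphi_{jmn}$ is totally antisymmetric and $j,m,n$ occupy the last three slots of $R_{ijmn}$, while $T_{im}T_{jn}\varphi_{jmn}$ vanishes by (i). This gives the first identity at once. For the Ricci identity and the curl symmetry simultaneously, I would solve \eqref{eq: Bianchi-type identity} for $\nabla_m T_{an}$ and substitute into $\Curl T_{ab}=\nabla_m T_{an}\varphi_{bmn}$. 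The term $\nabla_a T_{mn}\varphi_{bmn}$ dies by (i), leaving $(\tfrac12 R_{mars}-T_{mr}T_{as})\varphi_{rsn}\varphi_{bmn}$. I then contract the two copies of $\varphi$ by the standard identity $\varphi_{rsn}\varphi_{bmn}=g_{rb}g_{sm}-g_{rm}g_{sb}+\psi_{rsbm}$. Against the $gg$-part the torsion terms assemble into $-T^2+(\tr T)T$ (with $(T^2)_{ab}=T_{am}T_{mb}$) and the curvature terms collapse to the Ricci tensor through $R_{mabm}=R_{ab}$ and $R_{mamb}=-R_{ab}$ (Riemann symmetries plus the convention $R_{jk}=R_{ljkl}$); against the $\psi$-part, the torsion term $T_{mr}T_{as}\psi_{rsbm}$ dies by (i). The outcome $\Curl T=-\Ric-T^2+(\tr T)T$ is manifestly symmetric, so $\Curl T_{ab}=\Curl T_{ba}$ comes for free.

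The one piece not handled by (i) or (ii) is the pure curvature--$\psi$ contraction $R_{mars}\psi_{rsbm}$, and showing it vanishes is the main obstacle: here the three contracted slots of $R$ are neither the first nor the last three, so the first Bianchi identity does not apply directly. The plan is to produce two independent relations between $R_{mars}\psi_{rsbm}$ and $R_{mrsa}\psi_{rsbm}$ --- one by contracting the last-three Bianchi relation $R_{mars}+R_{mrsa}+R_{msar}=0$ first with $\varphi_{rsn}$ and then with $\varphi_{bmn}$ (re-expanding $\varphi\varphi$ by the contraction identity), and one by contracting the first-three form of Bianchi directly with $\psi_{rsbm}$, carefully tracking the permutation signs of the totally antisymmetric $\psi$. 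The two relations force both contractions to be zero. This index bookkeeping --- together with keeping the curvature sign convention consistent with \eqref{eq: Bianchi-type identity} --- is the delicate part; a clean sanity check is the constant-curvature model $R_{ijkl}=\kappa(g_{ik}g_{jl}-g_{il}g_{jk})$, for which $R_{mars}\psi_{rsbm}$ is visibly zero because every surviving $\psi$ then carries a repeated index.

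Finally, the scalar curvature formula follows by tracing the Ricci identity: $\tr\Curl T=\nabla_m T_{an}\varphi_{amn}=0$ by (i), $\tr(T^2)=|T|^2$, and $\tr\big((\tr T)T\big)=(\tr T)^2$, which yields $R=(\tr T)^2-|T|^2$.
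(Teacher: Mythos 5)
Your proposal is correct and follows essentially the same route as the paper: trace the $\gt$-Bianchi type identity \eqref{eq: Bianchi-type identity} for the divergence, contract it with $\varphi$ for the curl, expand $\varphi\varphi$ via \eqref{Eq.contraction.var.var. 1 index}, and kill the leftover terms using the symmetry of $T$ and the first Bianchi identity. The only organizational difference is that you compute $\Curl T_{ab}$ outright and read off both the Ricci formula and the symmetry of $\Curl T$ at once, whereas the paper first shows $\Curl T_{ab}-\Curl T_{ba}=0$ (where the torsion terms cancel pairwise) and then extracts the Ricci formula from the same computation; your ordering is arguably cleaner. One concrete simplification you are missing: the term you single out as the main obstacle, $R_{mars}\psi_{rsbm}$, does \emph{not} require your two-relation detour. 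Since $R$ is antisymmetric in its first two slots, $R_{mars}=-R_{amrs}$, and $\psi_{rsbm}=\psi_{bmrs}$ (an even permutation), so $R_{mars}\psi_{rsbm}=-R_{amrs}\psi_b{}^{mrs}$, which is precisely a contraction of $R$ over its \emph{last} three slots against a totally antisymmetric tensor; it vanishes by the first Bianchi identity exactly as in the paper's preliminary identity \eqref{eq: contraction R_varphi}, which is your principle (ii) verbatim. Finally, you are right to flag the curvature sign bookkeeping as delicate: a literal application of the paper's stated conventions ($R_{jk}=R_{ljkl}$) in this contraction produces $R_{mabm}=\Ric_{ab}$, whereas the paper's own displayed computation identifies $\tfrac12 R_{mabq}g^{mq}-\tfrac12 R_{mapb}g^{mp}$ with $-\Ric_{ab}$; this overall sign is immaterial for the symmetry of $\Curl T$ but fixes the signs in \eqref{eq: Ricci_formula}, so you would need to pin down the convention implicit in \eqref{eq: Bianchi-type identity} (inherited from the cited source) rather than rely on the conventions as stated in the Notation section.
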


\begin{proof}
Using \eqref{eq: 1-bianchi identity} and  the symmetries of $R_{abmn}$ is easy to prove that 
\begin{equation}\label{eq: contraction R_varphi}
    R_{abmn}\varphi^{bmn}=0 \qandq R_{amnp}{\psi_b}^{mnp}=0.
\end{equation}
Now, since $T$ is symmetric, using \eqref{eq: Bianchi-type identity} and \eqref{eq: contraction R_varphi} for the divergence $T$, we have
\begin{align*}
    \div T_a=\nabla_bT_a^b=\nabla_aT_b^b+\left(\frac12 R_{bamn}-T_{am}T_{bn}\right)\varphi^{bmn}=\nabla_a\tr T,
\end{align*}
and in addition, by \eqref{Eq.contraction.var.var. 1 index} for the curl of $T$, we get
\begin{align*}
    \Curl T_{ab}-\Curl T_{ba}=&\nabla_mT_{an}{\varphi_b}^{mn}-\nabla_mT_{bn}{\varphi_a}^{mn}\\
    =&\left(\frac12 R_{mapq}-T_{mp}T_{aq}\right){\varphi_n}^{pq}{\varphi_b}^{mn}-\left(\frac12 R_{mbpq}-T_{mp}T_{bq}\right){\varphi_n}^{pq}{\varphi_a}^{mn}\\
     =&\left(\frac12 R_{mapq}-T_{mp}T_{aq}\right)(g_b^pg^{qm}-g_b^qg^{pm}+{\psi_b}^{mpq})\\
     &-\left(\frac12 R_{mbpq}-T_{mp}T_{bq}\right)(g_a^pg^{qm}-g_a^qg^{pm}+{\psi_a}^{mpq})\\
     =&\frac12 R_{mabq}g^{mq}-\frac12 R_{mapb}g^{mp}-T_{am}T^m_b+\tr (T)T_{ab}\\
     &-\frac12 R_{mbaq}g^{mq}+\frac12 R_{mbpa}g^{mp}+T_{bm}T^m_a-\tr (T)T_{ba}\\
     =&-\Ric_{ab}+\Ric_{ba}=0.
\end{align*}

The formula for $\Ric$ can be derived from the computation above and for the scalar curvature, it follow from the observation 
$$
\Curl T_{aa}=\nabla_mT_{an}{\varphi_a}^{mn}=0.
$$
\end{proof}

Similar to \cite{Bryant2006}*{Corollary 2} for the case of closed $\gt$-structures, we can characterise the Einstein metrics induced by a coclosed $\gt$-structure:

\begin{cor}
    A coclosed $\gt$-structure $\varphi$ induces an Einstein metric if and only if the full torsion tensor satisfies
    \begin{equation}\label{eq: Einstein condition}
        \rivarphi(\Curl T)=\frac{3}{7}|T|^2\varphi-(\tr T)\tau_3-\rivarphi(T^2).
    \end{equation}
\end{cor}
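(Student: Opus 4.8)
The plan is to reduce the statement to a purely algebraic identity obtained by feeding the Ricci formula from the previous proposition into the injective operator $\rivarphi$. Since $\dim M = 7$, the metric $g$ is Einstein precisely when $\Ric = \tfrac{R}{7}\,g$, i.e. when the trace-free part of $\Ric$ vanishes. I would start from the two outputs of the proposition, namely $\Ric = -\Curl T - T^2 + (\tr T)T$ and $R = (\tr T)^2 - |T|^2$, so that the Einstein condition becomes the symmetric $2$-tensor equation $-\Curl T - T^2 + (\tr T)T = \tfrac{R}{7}\,g$.

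The main idea is that $\rivarphi\colon \cS^2 \to \Omega^3$ is \emph{linear and injective}, so this tensor equation holds if and only if its image under $\rivarphi$ holds; this is what upgrades the computation into a genuine biconditional and supplies both directions of the corollary at once. Two elementary inputs are needed. First, directly from the definition \eqref{Eq:i.operator} one has $\rivarphi(g) = 3\varphi$. Second, I would record the relation between $\tau_3$ and $\rivarphi(T)$: tracing $T = \tfrac{\tau_0}{4} g - \tau_{27}$ gives $\tau_0 = \tfrac{4}{7}\tr T$ and hence $\tau_{27} = \tfrac{\tr T}{7}\, g - T$, so that $\tau_3 = \rivarphi(\tau_{27}) = \tfrac{3\tr T}{7}\varphi - \rivarphi(T)$.

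Applying $\rivarphi$ to the Einstein equation and using linearity together with $\rivarphi(g)=3\varphi$, I would solve for $\rivarphi(\Curl T)$ to obtain $\rivarphi(\Curl T) = -\rivarphi(T^2) + (\tr T)\rivarphi(T) - \tfrac{3R}{7}\varphi$. Substituting $(\tr T)\rivarphi(T) = \tfrac{3(\tr T)^2}{7}\varphi - (\tr T)\tau_3$ (the rearranged second input) and then replacing $R$ by $(\tr T)^2 - |T|^2$ collapses the two multiples of $\varphi$ via $\tfrac{3(\tr T)^2}{7} - \tfrac{3R}{7} = \tfrac{3}{7}|T|^2$, yielding exactly $\rivarphi(\Curl T) = \tfrac{3}{7}|T|^2\varphi - (\tr T)\tau_3 - \rivarphi(T^2)$.

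There is no analytic obstacle here; the content is entirely algebraic. The only places demanding care are (a) invoking injectivity of $\rivarphi$ correctly, so that the ``if'' direction is obtained and not merely the ``only if'', and (b) the bookkeeping of trace-free parts, ensuring that $\tau_{27} = -T_0$ (the trace-free part of $T$) and that the scalar multiples of $\varphi$ recombine to the stated coefficient $\tfrac{3}{7}|T|^2$. I expect (b), rather than any conceptual difficulty, to be the main thing to get right.
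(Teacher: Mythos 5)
Your proposal is correct and follows essentially the same route as the paper, whose entire proof is the one line ``apply $\rivarphi$ to \eqref{eq: Ricci_formula}''; you have simply supplied the details (linearity and injectivity of $\rivarphi$, $\rivarphi(g)=3\varphi$, $\tau_{27}=\tfrac{\tr T}{7}g-T$ so $\tau_3=\tfrac{3\tr T}{7}\varphi-\rivarphi(T)$, and $R=(\tr T)^2-|T|^2$), and the resulting algebra checks out. The one point worth noting explicitly is that $\Curl T$ is symmetric for coclosed $\varphi$ by \eqref{eq: Laplacian.Grigorian}, so that $\rivarphi$ may legitimately be applied to it.
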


\begin{proof}
    The result follows by applying the map $\rivarphi$ in \eqref{eq: Ricci_formula}.
\end{proof}

\begin{remark}
     Using the expression of the full torsion tensor in terms of the torsion forms \eqref{eq:torsion.coclosed}, the equation \eqref{eq: Einstein condition} becomes
    \begin{equation}\label{eq: Einstein_torsion_forms}
        \rivarphi(\Curl \tau_{27})=\frac37|\tau_{27}|^2\varphi-\frac{5\tau_0}{4}\tau_3-\rivarphi(\tau_{27}^2).
    \end{equation}
    It is well know that a metric induced by the  nearly parallel $\gt$-structure (i.e. $\tau_3=\rivarphi(\tau_{27})=0$) is Einstein. It is easy to check that \eqref{eq: Einstein_torsion_forms} is satisfied trivially for a nearly $\gt$-structure. 
\end{remark}


 \section{Laplacian coflow of $\gt$-structures}\label{Sec: Laplacian coflow}
In this section, we recall  the definition of the Laplacian coflow and we also study soliton solutions  and symmetries of coclosed $\gt$-structure. For these we follow \cite{Karigiannis2012,Grigorian2013}.  
 
\begin{definition}
		A time-dependent $\gt$-structure $\{\varphi(t)\}_{t\in (\epsilon_1,\epsilon_2)}$ on a $7$-manifold $M$, satisfies the Laplacian coflow of coclosed $\gt$-strcutures, if for any $t\in(\epsilon_1,\epsilon_2)$ we have 
		\begin{equation}
		\label{Eq.coflowLaplacian}
		\frac{\partial}{\partial t}\psi(t)=\Delta_t\psi(t) \qandq \rd\psi(t)=0,
		\end{equation}
		where $\psi(t)=\ast_t\varphi(t)$ and $\Delta_t=\rd\rd^{\ast_t}+\rd^{\ast_t}\rd$ is the Hodge Laplacian with respect to the metric $g(t)=g_{\varphi(t)}$.  
	\end{definition}
	
	
As for many geometric flows, we are interested in considering \emph{self-similar solutions}
\begin{equation}\label{eq: self-similar_solution}
    \varphi(t)=\lambda(t)f(t)^*\varphi \qwhereq \lambda(t)\in C^\infty(M) \qandq f(t)\in \Diff(M),
\end{equation}
it means, solutions that evolves the initial data $\varphi$ by diffeomorphisms and scalings, since these kind of solutions are expected to be related to singularities of the flow. In particular, self-similar solutions with initial condition $\varphi$ are equivalent with a time independent equation of $\psi=\ast\varphi$, called the \emph{soliton equation}, namely,  $\varphi$ is called a \emph{soliton} for the Laplacian coflow \eqref{Eq.coflowLaplacian}, if $\psi$ satisfies the soliton equation: 
	\begin{equation}\label{Eq.Soliton}
	\Delta_\psi\psi=\cL_{X}\psi+\lambda\psi
	\end{equation} 
	where $\lambda\in\R$ and $X$ is a complete vector field on $M$. Moreover,  the soliton $(\varphi,\lambda,X)$ is called \emph{expanding, steady}, or \emph{shrinking}, if $\lambda>0$, $\lambda=0$ or $\lambda<0$, respectively.  

The following lemma, decomposes the Hodge Laplacian of $\psi$ according to the $\gt$-irreducible decomposition of $\Omega^2$, it appeared  originally in \cite{Grigorian2013}*{Proposition 4.6}. Here, we provide the computations in detail for the self-contained of the work, for it we follow the computation given in \cite{Lotay2017} for $\Delta_\varphi\varphi$ in the closed case. 

\begin{lemma}\label{Lemma. Laplacian Grigorian}
    Let $\varphi$ be a coclosed $\gt$-structure on a manifold $M$ with associated metric $g$. Then,
\begin{align*}
\Delta_{\psi}\psi&=\frac{2}{7}((\tr T)^2+|T|^2)\psi\oplus( 
    \rd\tr T) \wedge\varphi\\
	& \quad\oplus \ast_{\varphi}\ri_{\varphi}\Big(\Ric-\frac{1}{2}T\circ T-(\tr T)T
	+\frac{1}{14}\left((\tr T)^2+|T|^2\right)g\Big)\in \Omega^4_1\oplus\Omega_7^4\oplus\Omega^4_{27}.
\end{align*}
\end{lemma}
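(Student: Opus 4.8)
The plan is to use the coclosed hypothesis to collapse the Hodge Laplacian to a single term, and then to compute that term directly in a local orthonormal frame, following the template of the closed-case computation of $\Delta_\varphi\varphi$.

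Since $\rd\psi=0$, the Laplacian reduces to $\Delta_\psi\psi=\rd\rd^{\ast}\psi$. To evaluate $\rd^{\ast}\psi$ I use that on $4$-forms of a $7$-manifold $\rd^{\ast}=\ast\rd\ast$ (the sign $(-1)^{7\cdot 5+1}$ is $+1$), together with $\ast\psi=\varphi$ and $\ast\ast=\id$ on $\Omega^3$. Feeding in the identity \eqref{eq: dvarphi_i_h} for $\rd\varphi$ then gives
\[
\rd^{\ast}\psi=\ast\rd\varphi=\ast\ast\,\rivarphi(P)=\rivarphi(P),\qquad P:=\tfrac13(\tr T)g-T .
\]
Thus the entire problem reduces to differentiating the explicit $3$-form $\rivarphi(P)$, i.e.\ to computing and decomposing $\Delta_\psi\psi=\rd\,\rivarphi(P)$.

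Next I would expand $\rd\,\rivarphi(P)$ in coordinates. Writing $\rivarphi(P)$ out via \eqref{Eq:i.operator} and applying the coordinate formula for the exterior derivative of a $3$-form, each covariant derivative falls either on the tensor $P$ or on a factor of $\varphi$. On the $\varphi$-factors I substitute $\nabla_i\varphi_{jkl}=T_i^{m}\psi_{mjkl}$ from \eqref{Eq.nabla.varphi}, which produces terms quadratic in $T$ of the schematic shape $P\cdot T\cdot\psi$; on the $P$-factors I obtain terms linear in $\nabla T$, of shape $\nabla P\cdot\varphi$. After full antisymmetrization over the four free indices I contract the resulting products of $\varphi$ with $\varphi$ and with $\psi$ using the standard one-, two-, and three-index contraction identities between $\varphi$ and $\psi$. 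This bookkeeping is the laborious heart of the argument and the step most prone to sign and indexing errors.

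Finally I project onto $\Omega^4_1\oplus\Omega^4_7\oplus\Omega^4_{27}$ (recall $\Omega^4$ carries no $14$-dimensional summand). For the $\Omega^4_1$ part I pair with $\psi$, using $\langle\psi,\psi\rangle=7$, $\rivarphi(g)=3\varphi$, and the scalar curvature identity $R=(\tr T)^2-|T|^2$ from \eqref{eq: Ricci_formula}, obtaining the coefficient $\tfrac27\big((\tr T)^2+|T|^2\big)$. The $\Omega^4_7$ part is the piece assembled from the $\nabla P$ terms; since $\alpha\mapsto\alpha\wedge\varphi$ is the (unique up to scale) $\gt$-equivariant embedding of the $7$-representation into $\Omega^4$, and $\div T_a=\nabla_a\tr T$ by \eqref{eq: Laplacian.Grigorian}, this collapses to $(\rd\tr T)\wedge\varphi$. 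The remaining trace-free symmetric contribution assembles as $\ast\rivarphi$ of a combination of $\Curl T$ and quadratic torsion terms; replacing $\Curl T=-\Ric-T^2+(\tr T)T$ via \eqref{eq: Ricci_formula} and reorganizing the quadratics through the circ product of \eqref{eq:products} yields the stated tensor $\Ric-\tfrac12 T\circ T-(\tr T)T+\tfrac1{14}\big((\tr T)^2+|T|^2\big)g$. As a consistency check one verifies this tensor is trace-free (its trace is $R-\tfrac12((\tr T)^2-|T|^2)-(\tr T)^2+\tfrac12((\tr T)^2+|T|^2)=0$), so it genuinely lies in $\Omega^4_{27}$.

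The main obstacle is the middle step: correctly performing the $\varphi$–$\psi$ contractions after antisymmetrization and cleanly separating the trace, $\Omega^4_7$, and trace-free symmetric contributions. The $\gt$-Bianchi identity \eqref{eq: Bianchi-type identity} is the essential tool here, as it converts the derivatives of $T$ that are not pure divergences into curvature, allowing the final answer to be written through $\Ric$ rather than through raw second derivatives of $\varphi$.
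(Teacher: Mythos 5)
Your proposal follows essentially the same route as the paper's proof: both reduce $\Delta_\psi\psi$ to $\rd\,\rivarphi\big(\tfrac13(\tr T)g-T\big)$ via coclosedness and \eqref{eq: dvarphi_i_h}, then extract the three irreducible components by contracting against $\psi$ and $e^m\wedge\varphi$ and substituting $\div T=\nabla\tr T$ together with $\Ric=-\Curl T-T^2+(\tr T)T$. The contraction bookkeeping you flag as the laborious heart is exactly what the paper carries out in full, so your outline is a faithful but abbreviated version of the same argument, and your trace-free consistency check at the end is correct.
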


\begin{proof}
    Since $\rd\psi=0$, by  \eqref{eq: dvarphi_i_h} we have
    \begin{equation}\label{Eq: Laplacian psi exact}
        \Delta_\psi\psi=\rd\rd^*\psi=\rd\ast\rd\varphi=\rd\beta \qwhereq \beta:=\ri_{\varphi}(h)=\ri_{\varphi}\left(\frac{1}{3}(\tr T)g-T\right)\in \Omega^3_{27}.
    \end{equation}
    In local coordinates, we can write \eqref{Eq: Laplacian psi exact} as 
    $$
    \Delta_\psi\psi=\frac{1}{4!}(\Delta_\psi\psi)_{ijkl}dx^{ijkl},
    $$
    where
    \begin{equation}\label{eq: coordinates of Laplacian psi}
        (\Delta_\psi\psi)_{ijkl}=\nabla_i\beta_{jkl}-\nabla_j\beta_{ikl}+\nabla_k\beta_{ijl}-\nabla_l\beta_{ijk}.
    \end{equation}
    We can decomposes $\Delta_\psi\psi$ into irreducible summands as
    \begin{equation}
        \Delta_\psi\psi=a\psi + X^\flat\wedge\varphi+\ast\ri_{\varphi}(s),
    \end{equation}
    where $a$ is a function, $X$ a vector field and $s$ is a trace-less symmetric $2$-tensor. Now, we compute the expression of $a$, $X$ and $s$ in terms of the full torsion tensor of $\varphi$.
    For $a$, using \eqref{Eq: Laplacian psi exact}, \eqref{eq: coordinates of Laplacian psi}, \eqref{Eq.contraction.var.psi.2 index} and \eqref{Eq.contraction.psi.psi.2 index}, we have
    \begin{align*}
        a=\frac{1}{7}\langle\Delta_\psi,\psi\rangle=&\frac{1}{168}(\nabla_i\beta_{jkl}-\nabla_j\beta_{ikl}+\nabla_k\beta_{ijl}-\nabla_l\beta_{ijk})\psi^{ijkl}\\
        =&\frac{1}{42}\nabla_i(h_j^m\varphi_{mkl}+h_k^m\varphi_{jml}+h_l^m\varphi_{jkm})\psi^{ijkl}\\
        =&\frac{1}{14}(\nabla_ih_j^m\varphi_{mkl}+h_j^mT_i^n\psi_{nmkl})\psi^{ijkl}\\
        =&\frac{2}{7}(\nabla_ih_j^m{\varphi_m}^{ij}+\tr h \tr T-\langle h,T\rangle)\\
        =&\frac{2}{7}\left((\tr T)^2+|T|^2\right),
    \end{align*}
    where $h$ is the symmetric $2$-tensor given in \eqref{Eq: Laplacian psi exact}. For the vector field $X$, we have
    \begin{align*}
        \langle \Delta_\psi\psi,e^m\wedge\varphi\rangle=\ast(X^\flat\wedge\varphi\wedge\ast(e^m\wedge\varphi))=4\langle X^\flat, e^m\rangle=4X_ng^{nm}.
    \end{align*}
    Thus, using \eqref{Eq: Laplacian psi exact}, \eqref{Eq.contraction.var.var. 1 index}, \eqref{Eq.contraction.var.var.2 index}, \eqref{Eq.contraction.var.psi.2 index} and \eqref{eq: Laplacian.Grigorian}, we get
    \begin{align*}
        X_m=&\frac{1}{4}\langle\Delta_\psi\psi,e^n\wedge\varphi\rangle g_{mn}\\
        =&\frac{1}{98}(\nabla_i\beta_{jkl}-\nabla_j\beta_{ikl}+\nabla_k\beta_{ijl}-\nabla_l\beta_{ijk})(e^n\wedge\varphi)^{ijkl}g_{mn}\\
        =&\frac{1}{4!}(\nabla_m\beta_{jkl}\varphi^{jkl}-3\nabla_j\beta_{mkl}\varphi^{jkl})\\
        =&\frac{1}{4!}(\nabla_m(\beta_{jkl}\varphi^{jkl})-\beta_{jkl}\nabla_m\varphi^{jkl}-3\nabla_j(\beta_{mkl}\varphi^{jkl})+3\beta_{mkl}\nabla_j\varphi^{jkl})\\
        =&\frac{1}{4!}\left(3\nabla_m(h_j^n\varphi_{njk}\varphi^{jkl})-3h_j^n\varphi_{nkl}T_{mp}\psi^{pjkl}-3\nabla_j(h_m^n\varphi_{nkl}\varphi^{jkl}+2h_k^n\varphi_{mnl}\varphi^{jkl})\right)\\
        =&\frac{1}{8}\left(6\nabla_mh_j^ng^j_n-4h_j^nT_{mp}{\varphi_n}^{pj}-6\nabla_jh_m^j-2\nabla_m h_k^ng_n^k+2\nabla_nh_m^n\right)\\
        =&\frac{1}{2}\left(\frac{4}{3}\nabla_m(\tr T)-\frac{1}{3}\nabla_m(\tr T)+\nabla_jT_m^j\right)=(\div T)_m.
    \end{align*}
    Finally, to find the symmetric $2$-tensor $s$, we have:
    \begin{align}\label{Eq.symmetric.part.Laplacian}
        \begin{split}
	&  (\Delta_\psi\psi)_{imnp}{\psi_j}^{mnp}+(\Delta_\psi\psi)_{jmnp}{\psi_i}^{mnp}\\
	&  =a(\psi_{imnp}{\psi_j}^{mnp}+\psi_{jmnp}{\psi_i}^{mnp})+(\ast \ri_{\varphi}(s))_{imnp}{\psi_j}^{mnp}+(\ast \ri_{\varphi}(s))_{jmnp}{\psi_i}^{mnp},
	\end{split}
    \end{align}
    Then, using \eqref{eq:i_psi},\eqref{Eq.contraction.psi.psi.2 index} and \eqref{Eq.contraction.psi.ps. 3 index}, we get
 \begin{align*}
     (\ast \ri_{\varphi}(s))_{imnp}{\psi_j}^{mnp}=&-s_i^q\psi_{qmnp}{\psi_j}^{mnp}-3s_m^q\psi_{iqnp}{\psi_j}^{mnp}\\ \nonumber
     =&-24s_i^qg_{qj}-3s_m^q(4g_{ij}g_q^m-4g_i^mg_{qj}+2{\psi_{iqj}}^m)=-12s_{ij}
 \end{align*}
 By symmetry, the right hand side of  \eqref{Eq.symmetric.part.Laplacian} becomes
\begin{equation}
\label{Eq.Laplacian.h.left}	                    \begin{split}
(\Delta_\psi\psi)_{imnp}{\psi_j}^{mnp}+(\Delta_\psi\psi)_{jmnp}{\psi_i}^{mnp}=24\left(2ag_{ij}-s_{ij}\right).
	\end{split}
\end{equation}
Now, using \eqref{eq: coordinates of Laplacian psi}, \eqref{Eq.contraction.var.psi.2 index}, \eqref{Eq.contraction.var.psi.2 index}, \eqref{Eq.contraction.var..psi.1 index} and \eqref{Eq.contraction.psi.psi.2 index}, we have
\begin{align*}
    (\Delta_\psi\psi)_{imnp}{\psi_j}^{mnp}=&(\nabla_i\beta_{mnp}-3\nabla_m\beta_{inp}){\psi_j}^{mnp}\\
    =&3(\nabla_ih_m^q\varphi_{qnp}+h_m^q\nabla_i\varphi_{qnp}){\psi_j}^{mnp}-3\nabla_m((h_i^q\varphi_{qnp}+2h_n^q\varphi_{iqp}){\psi_j}^{mnp})\\
    &+3(h_i^q\varphi_{qnp}+2h_n^q\varphi_{iqp})\nabla_m{\psi_j}^{mnp}\\
    =&3\left(4\nabla_ih_m^q{\varphi_{qj}}^m+h_m^qT_i^l(4g_{lj}g_q^m-4g_l^mg_{qj}+2{\psi_{lqj}}^m)-4\nabla_m(h_i^q{\varphi_{qj}}^m)\right.\\
    &\left.+2\nabla_m(h_i^q{\varphi_{qj}}^m-h_{nj}{\varphi_i}^{mn}-h_n^m{\varphi_{ji}}^n-\tr h {\varphi_{ij}}^m)\right.\\
    &\left.+(h_i^q\varphi_{qnp}+2h_n^q\varphi_{iqp})(-T_{mj}\varphi^{mnp}+\tr T{\varphi_j}^{np}-T_m^n{\varphi_j}^{mp}+T_m^p{\varphi_j}^{mn})\right)\\
    =&6\left(2(\tr h T_{ij}-T_i^mh_{mj})-\nabla_mh_i^q{\varphi_{qj}}^m-\nabla_mh_{nj}{\varphi_i}^{mn}-\nabla_m(h_n^m{\varphi_{ji}}^n+\tr h {\varphi_{ij}}^m)\right.\\
    &\left.-3h_i^mT_{mj}-\tr h T_{ij}+h_i^mT_{mj}+3\tr T h_{ij}+\tr T\tr h g_{ij}-\tr T h_{ij}-\tr T h_{ij}+h_i^mT_{mj}\right.\\
    &\left.-T_m^nh_n^mg_{ij}+T_i^mh_{mj}-(T\circ h)_{ij}\right)\\
    =&6\left(\tr h T_{ij}-T_i^mh_{mj}-(\Curl h)_{ij}-(\Curl h)_{ji}-\nabla_m(h_n^m{\varphi_{ji}}^n+\tr h {\varphi_{ij}}^m)\right.\\
    &\left.-h_i^mT_{mj}+\tr T h_{ij}+(\tr T\tr h-\langle T,h\rangle)g_{ij}-(T\circ h)_{ij}\right).
\end{align*}
Thus, replacing $h=\frac{1}{3}(\tr T)g-T$ in the above expression and using \eqref{eq: Laplacian.Grigorian}, the left hand side of \eqref{Eq.symmetric.part.Laplacian} becomes
\begin{align*}
     (\Delta_\psi\psi)_{imnp}{\psi_j}^{mnp}+(\Delta_\psi\psi)_{jmnp}{\psi_i}^{mnp} =24\left(T_i^mT_{mj}+(\Curl T)_{ij}+\frac12((\tr T)^2+|T|^2)g_{ij}+\frac12(T\circ T)_{ij}\right).
\end{align*}
Finally, from \eqref{Eq.Laplacian.h.left}, we obtain 
$$
s_{ij}=-(\Curl T)_{ij}-T_i^mT_{mj}-\frac12(T\circ T)_{ij}+\frac{1}{14}((\tr T)^2+|T|^2)g_{ij}
$$
\end{proof}

Similar to the Laplacian of $\psi$, we can compute the decomposition of the Lie derivative with respect to any vector field. We recall that the vector field $X$ is called an \emph{infinitesimal symmetry} of $\psi$, if $\cL_X\psi=0$. The next result was done in \cite{dgk-isometric} for the $3$-form $\varphi$.    
\begin{prop}
\label{theorem.lie.derivative.psi}
	Let $\varphi$ be a coclosed $\gt$-structure on $M^7$, with associated metric $g$, and let $X$ be a vector field on $M$. Then, if $\psi=*\varphi$,  
	\begin{equation}
	\label{Eq:Lie.derivative.psi}
		\cL_X \psi=\frac{4}{7}(\div X)\psi\oplus(-\frac{1}{2}\Curl X+X\lrcorner T  )^{\flat}\wedge\varphi\oplus\ast \ri_{\varphi}\Big(\frac{1}{7}(\div X)g -\frac{1}{2}(\cL_Xg)
		\Big)\in\Omega_1^4\oplus\Omega_7^4\oplus\Omega_{27}^{4}.
		\end{equation}
		In particular, $X$ is an infinitesimal symmetry of $\psi$ if and only if $X$ is  a Killing vector field of $g$ and satisfies $\Curl(X)=2X\lrcorner T$.
\end{prop}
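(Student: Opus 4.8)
The plan is to compute $\cL_X\psi$ by projecting it onto the three irreducible summands in $\Omega^4_1\oplus\Omega^4_7\oplus\Omega^4_{27}$, exactly as in Lemma~\ref{Lemma. Laplacian Grigorian}. Writing
\[
\cL_X\psi = a\,\psi + V^\flat\wedge\varphi + \ast\ri_\varphi(s),
\]
with $a\in C^\infty(M)$, $V\in\sX(M)$ and $s\in\mathcal{S}_0$, I extract the three pieces by pairing against $\psi$, against $e^n\wedge\varphi$, and against $\psi_j{}^{mnp}$ respectively, using orthogonality of the decomposition. The inputs are the coordinate expression for the Lie derivative of a $4$-form,
\[
(\cL_X\psi)_{ijkl} = X^m\nabla_m\psi_{ijkl} + (\nabla_iX^m)\psi_{mjkl} + (\nabla_jX^m)\psi_{imkl} + (\nabla_kX^m)\psi_{ijml} + (\nabla_lX^m)\psi_{ijkm}
\]
(valid since $\nabla$ is torsion-free), the derivative formula \eqref{eq: nabla.psi}, and the same $\varphi$–$\psi$ contraction identities used in Lemma~\ref{Lemma. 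Laplacian Grigorian}; throughout I use $(\cL_Xg)_{ab}=\nabla_aX_b+\nabla_bX_a$, $\div X=\nabla_aX^a$, and $\Curl X_c=\nabla_aX_b\varphi^{ab}{}_c$. As a consistency check, Cartan's formula together with the coclosed condition $\rd\psi=0$ gives $\cL_X\psi=\rd(X\lrcorner\psi)$, so $\cL_X\psi$ is automatically exact.

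For the $\Omega^4_1$ component I compute $a=\tfrac17\langle\cL_X\psi,\psi\rangle$. The torsion term $X^m\nabla_m\psi_{ijkl}\,\psi^{ijkl}$ drops out because $|\psi|^2$ is constant, and each of the four $\nabla X$ terms contracts, via $\psi_{mjkl}\psi^{ijkl}=24\,\delta_m^i$, to a multiple of $\div X$; collecting factors gives $a=\tfrac47\div X$. For the $\Omega^4_7$ component I use $\langle\cL_X\psi,e^n\wedge\varphi\rangle=4V^n$ and contract $(\cL_X\psi)_{njkl}\varphi^{jkl}$. Here the $\nabla X$ terms produce a multiple of $\Curl X$ through the contraction of $\psi$ with $\varphi$, while the torsion term $X^m\nabla_m\psi_{njkl}\varphi^{jkl}$, reduced by \eqref{eq: nabla.psi}, produces the contraction $X\lrcorner T$; after normalisation, with the signs fixed by the contraction identities, these assemble into the $1$-form $-\tfrac12\Curl X+X\lrcorner T$.

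The main work is the $\Omega^4_{27}$ component. Mirroring the symmetric-tensor step of Lemma~\ref{Lemma. Laplacian Grigorian}, I contract
\[
(\cL_X\psi)_{imnp}\,\psi_j{}^{mnp}+(\cL_X\psi)_{jmnp}\,\psi_i{}^{mnp},
\]
whose right-hand side equals $24\bigl(2a\,g_{ij}-s_{ij}\bigr)$ once the $\psi$- and $\ast\ri_\varphi$-parts are inserted and \eqref{eq:i_psi} is used. On the left, the $\nabla X$ terms recombine into the symmetric tensor $\cL_Xg$ (this is where the symmetrisation in $i,j$ is essential), while the torsion contributions from \eqref{eq: nabla.psi} reduce, after the $\varphi$–$\psi$ contractions, to expressions of the form $X^aT_a{}^m\varphi_{ijm}$, which are antisymmetric in $i,j$ and are therefore killed by the symmetrisation. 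Solving for $s$ and subtracting the trace already recorded in $a$ yields $s=\tfrac17(\div X)g-\tfrac12\cL_Xg$, which is indeed trace-free because $\tr\cL_Xg=2\div X$. This bookkeeping—tracking signs through the contraction identities and verifying that the genuinely torsion-dependent terms drop out of $s$—is the step I expect to be most delicate, just as in the Laplacian computation.

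Finally, the characterisation of infinitesimal symmetries is read off from the decomposition. Since $\psi\neq0$, since $\beta\mapsto\beta\wedge\varphi$ is injective on $\Omega^1$, and since $\ri_\varphi$ is injective, $\cL_X\psi=0$ is equivalent to the simultaneous vanishing of $a$, $V^\flat$ and $s$, i.e.
\[
\div X=0,\qquad \Curl X=2\,X\lrcorner T,\qquad \tfrac17(\div X)g=\tfrac12\cL_Xg.
\]
The first and third equations together force $\cL_Xg=0$, so $X$ is Killing; conversely a Killing field satisfies $\div X=0$ and hence the third equation automatically. Therefore $\cL_X\psi=0$ if and only if $X$ is a Killing field with $\Curl X=2\,X\lrcorner T$, as claimed.
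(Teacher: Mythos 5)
Your proposal is correct and follows essentially the same route as the paper: both decompose $\cL_X\psi$ into its $\Omega^4_1\oplus\Omega^4_7\oplus\Omega^4_{27}$ components by pairing with $\psi$, with $e^n\wedge\varphi$, and via the symmetrised contraction against $\psi_j{}^{mnp}$, arriving at the same component formulas (including the observation that the genuinely torsion-dependent terms in the symmetric part are antisymmetric in $i,j$ and drop out). The only inessential difference is that you start from the covariant-derivative expression for the Lie derivative of a $4$-form, whereas the paper starts from $\cL_X\psi=\rd(X\lrcorner\psi)$ and computes $\rd\alpha$ for $\alpha=X\lrcorner\psi$ --- and, as you note, these coincide precisely because $\rd\psi=0$.
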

\begin{proof}
	Since $\varphi$ is coclosed, i.e. $\rd\psi=0$, we have 
	$$ \cL_X\psi=\rd(X\lrcorner\psi)+X\lrcorner\rd\psi= \rd(X\lrcorner \psi).$$
	Let $\alpha=X\lrcorner\psi$, so that locally $\alpha_{ijk}=X^l\psi_{lijk}$ and 
	\begin{equation*}		
		(\cL_X\psi)_{ijkl}=(\rd\alpha)_{ijkl}=\nabla_i\alpha_{jkl}-\nabla_j\alpha_{ikl}+\nabla_k\alpha_{ijl}-\nabla_l\alpha_{ijk}.
	\end{equation*}
	Denoting by $\pi_l^k:\Omega^k\rightarrow \Omega_l^k$ the orthogonal projections, we decompose $\cL_X\psi$ as
	\begin{equation}
	\label{Eq:decomposition.lie.derivative.psi}
		\cL_X\psi=\pi_1^4(\cL_X\psi)+\pi_7^4(\cL_X\psi)+\pi_{27}^4(\cL_X\psi)=a\psi+W^{\flat}\wedge\varphi+\ast i_{\varphi}(h),
	\end{equation}
	where  $a\in\Omega^0$, 
	and 
	$h$ is a trace-free symmetric $2$-tensor on $M$. We compute $a$ as follows: 
\begin{equation}
\label{Eq:valor.a.lx.psi }
    \begin{split}
    a &= \frac{1}{7}\langle\cL_X\psi,\psi\rangle=\frac{1}{168}(\nabla_i\alpha_{jkl}-\nabla_j\alpha_{ikl}+\nabla_k\alpha_{ijl}-\nabla_l\alpha_{ijk})\psi^{ijkl}\\
	&= \frac{1}{42}\nabla_i\alpha_{jkl}\psi^{ijkl}=\frac{1}{42}\nabla_i(\alpha_{jkl}\psi^{ijkl})-\frac{1}{42}\alpha_{jkl}\nabla_i\psi^{ijkl}\\
	&= \frac{24}{42}\nabla_i(X^mg_{mi})-\frac{1}{42}X^m\psi_{mjkl}(\nabla_i\psi^{ijkl})
	=\frac{4}{7}\nabla_iX_i=\frac{4}{7}\div X,
    \end{split}
\end{equation}
	where we used \eqref{Eq.contraction.var.psi.2 index} and because $T$ is symmetric.  
	To compute $W^{\flat}$, note that 
	$$ \langle\ast((\ast \cL_X\psi)\wedge\varphi),e^m\rangle= 4\langle W^{\flat},e^m\rangle,$$ 
	thus
	\begin{align*}
	    4W^{m} &= \ast\big((\ast\cL_X\psi)\wedge\varphi\wedge e^m\big)=\langle\varphi\wedge e^m,\cL_X\psi\rangle
		= \langle \varphi\wedge e^m, d\alpha\rangle
	\end{align*}
	Therefore, we obtain
\begin{equation}
\label{Eq:W.lie.derivative.psi}
    \begin{split}
    W^m &= \frac{1}{4}\langle \varphi\wedge e^m,d\alpha
\rangle=\frac{1}{4!}(\nabla^i\alpha^{jkm}-\nabla^j\alpha^{ikm}+\nabla^k\alpha^{ijm}-\nabla^m\alpha^{ijk})\varphi_{ijk}\\
    &=\frac{1}{4!}(3\nabla^i\alpha^{jkm}\varphi_{ijk}-\nabla^m\alpha^{ijk}\varphi_{ijk})\\
    &=\frac{3}{4!}\nabla^i(\alpha^{jkm}\varphi_{ijk})-\frac{3}{4!}\alpha^{jkm}\nabla^i\varphi_{ijk}-\frac{1}{4!}\nabla^m(\alpha^{ijk}\varphi_{ijk})+\frac{1}{4!}\alpha^{ijk}\nabla^{m}\varphi_{ijk}\\
     &=\frac{3}{4!}\nabla^i(X_l\psi^{ljkm}\varphi_{ijk})-\frac{3}{4!}X_l\psi^{ljkm}T^i_n\psi^n_{\,\,ijk}-\frac{1}{4!}\nabla^m(X_l\psi^{lijk}\varphi_{ijk})+\frac{1}{4!}X_l\psi^{lijk}T_n^m\psi^n_{\,\,ijk}\\
     &=-\frac{1}{2} \nabla^i(X_l\varphi_i^{\,\,lm})+X_lT^{ml}=-\frac{1}{2}(\nabla^iX_l\varphi_i^{\,\,lm}+X_l\nabla_i\varphi_i^{\,\,lm})+X_lT^{ml}\\
     &=-\frac{1}{2}\Curl X^m-\frac{1}{2}X_lT_i^{\,\,n}\psi_{ni}^{\,\,\,\,lm}+(X\lrcorner T)^m=-\frac{1}{2}(\Curl X)^m+(X\lrcorner T)^m
    \end{split}
\end{equation}
	Finally, to compute $h$, observe that
\begin{equation}		\label{Eq.Soliton.com.h}
	\begin{split}
	&  (\cL_X\psi)_{imnp}{\psi_j}^{mnp}+(\cL_X\psi)_{jmnp}{\psi_i}^{mnp}\\
	&  =a(\psi_{imnp}{\psi_j}^{mnp}+\psi_{jmnp}{\psi_i}^{mnp})+(\ast \ri_{\varphi}(h))_{imnp}{\psi_j}^{mnp}+(\ast \ri_{\varphi}(h))_{jmnp}{\psi_i}^{mnp},
	\end{split}
\end{equation}
	where $$(\ast \ri_{\varphi}(h))_{imnp}=-(h_i^q\psi_{qmnp}+h_m^q\psi_{iqnp}+h_n^q\psi_{imqp}+h_{p}^q\psi_{imnq}).$$
 Using \eqref{Eq.contraction.psi.psi.2 index} and \eqref{Eq.contraction.psi.ps. 3 index}, we get
 \begin{align*}
     (\ast \ri_{\varphi}(h))_{imnp}{\psi_j}^{mnp}=&-h_i^q\psi_{qmnp}{\psi_j}^{mnp}-3h_m^q\psi_{iqnp}{\psi_j}^{mnp}\\
     =&-24h_i^qg_{qj}-3h_m^q(4g_{ij}g_q^m-4g_i^mg_{qj}+2{\psi_{iqj}}^m)=-12h_{ij}
 \end{align*}
	By symmetry, the right hand side of  \eqref{Eq.Soliton.com.h} becomes
\begin{equation}
\label{Eq.soliton.h.left}	                    \begin{split}
(\cL_X\psi)_{imnp}{\psi_j}^{mnp}+(\cL_X\psi)_{jmnp}{\psi_i}^{mnp}=24\left(\frac87(\div X)g_{ij}-h_{ij}\right)
	\end{split}
\end{equation}
For the left-hand side of \eqref{Eq.soliton.h.left}, using the identities \eqref{Eq.contraction.var..psi.1 index},\eqref{Eq.contraction.var.psi.2 index},\eqref{Eq.contraction.psi.psi.2 index} and \eqref{Eq.contraction.psi.ps. 3 index}, we have:
\begin{align*}
    (\cL_X\psi)_{imnp}{\psi_j}^{mnp}=&\nabla_i\alpha_{mnp}{\psi_j}^{mnp}-3\nabla_m\alpha_{inp}{\psi_j}^{mnp}\\
    =&\nabla_i(\alpha_{mnp}{\psi_j}^{mnp})-\alpha_{mnp}\nabla_i{\psi_j}^{mnp}-3\nabla_m(\alpha_{inp}{\psi_j}^{mnp})+3\alpha_{inp}\nabla_m{\psi_j}^{mnp}\\
    =&24\nabla_iX_j-12T_i^m(X\lrcorner \varphi)_{mj}+12(\div X)g_{ij}-12\nabla_iX_j-6{(\nabla_mX\lrcorner\psi)_{ij}}^m\\
    &-6\tr(T)(X\lrcorner\varphi)_{ij}+6(X\lrcorner T)_m{\varphi^m}_{ij}+6T_i^m(X\lrcorner\varphi)_{mj}-6T_j^m(X\lrcorner\varphi)_{mi}\\
     &-12\tr(T)(X\lrcorner\varphi)_{ij}+6T_j^m(X\lrcorner\varphi)_{mi}+6T_i^m(X\lrcorner\varphi)_{mj}+6(X\lrcorner T)_m{\varphi^m}_{ij}\\
     =&12\nabla_iX_j+12(\div X)g_{ij}-6{(\nabla_mX\lrcorner\psi)_{ij}}^m-18\tr(T)(X\lrcorner\varphi)_{ij}+12(X\lrcorner T)_m{\varphi^m}_{ij}
\end{align*}
By symmetry, we get 
\begin{align*}
    (\cL_X\psi)_{imnp}{\psi_j}^{mnp}+(\cL_X\psi)_{jmnp}{\psi_i}^{mnp}=12(\nabla_iX_j+\nabla_jX_i)+24(\div X)g_{ij}
\end{align*}
	So, using \eqref{Eq:valor.a.lx.psi }, \eqref{Eq.soliton.h.left} and the above expressions, we obtain 
	$$	\frac{1}{2}(\nabla_iX_j+\nabla_jX_i)+(\div X)g_{ij}=\frac{8}{7}(\div X)h_{ij}-h_{ij}$$
	which, upon re-arranging, it gives
\begin{equation}		\label{Eq:h.lie.derivative.psi}
    \begin{split}
 	h_{ij } =\frac{1}{7}(\div X)g_{ij}-\frac{1}{2}(\cL_Xg)_{ij}
	\end{split}
\end{equation}
    Hence, substituting \eqref{Eq:valor.a.lx.psi }, \eqref{Eq:W.lie.derivative.psi} and \eqref{Eq:h.lie.derivative.psi} into \eqref{Eq:decomposition.lie.derivative.psi} we obtain \eqref{Eq:Lie.derivative.psi}.
	\end{proof}

\begin{prop}
\label{prop.general.soliton}
	Let $\varphi$ be a coclosed $\gt$-structure on $M$ with associated metric $g$. If $(\varphi,X,\lambda)$ is a soliton of the Laplacian coflow as in \eqref{Eq.Soliton}, then its full torsion tensor $T$ satisfies 
\begin{equation}
\label{Eq:Condition.soliton}
\begin{split}
    \div T=&-\frac{1}{2}(\Curl X)^\flat+X\lrcorner T,\\
    -\Ric+\frac{1}{2}T\circ T+(\tr T)T=&\frac{\lambda}{4}g+\frac{1}{2}\cL_X g.
\end{split}
	\end{equation}
	\end{prop}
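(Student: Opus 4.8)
The plan is to substitute the soliton equation \eqref{Eq.Soliton} into the two decomposition results already in hand, namely Lemma \ref{Lemma. Laplacian Grigorian} for $\Delta_\psi\psi$ and Proposition \ref{theorem.lie.derivative.psi} for $\cL_X\psi$, and then compare the three irreducible components in $\Omega^4 = \Omega^4_1\oplus\Omega^4_7\oplus\Omega^4_{27}$. The key structural observation is that the extra term $\lambda\psi$ sits purely in $\Omega^4_1$, so matching the $\Omega^4_7$ and $\Omega^4_{27}$ parts is untouched by $\lambda$, while the scalar ($\Omega^4_1$) part is exactly where $\lambda$ enters. Since the maps $a\mapsto a\psi$, $W\mapsto W^\flat\wedge\varphi$ and $s\mapsto \ast\ri_{\varphi}(s)$ (the last on trace-free symmetric $2$-tensors) are isomorphisms onto the respective summands, equating the two sides of \eqref{Eq.Soliton} is equivalent to equating the scalar, the $1$-form, and the trace-free symmetric $2$-tensor data separately.

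Carrying this out, the $\Omega^4_7$ comparison reads $(\div T)^\flat = \left(-\tfrac12\Curl X + X\lrcorner T\right)^\flat$, where I use $\rd\tr T = (\div T)^\flat$ from \eqref{eq: Laplacian.Grigorian}; this is precisely the first equation of \eqref{Eq:Condition.soliton}. The $\Omega^4_1$ comparison yields the scalar identity $\tfrac{2}{7}\big((\tr T)^2+|T|^2\big) = \tfrac{4}{7}\div X + \lambda$, and the $\Omega^4_{27}$ comparison yields, after cancelling the injective operator $\ast\ri_{\varphi}$, the trace-free tensor identity $\Ric - \tfrac12 T\circ T - (\tr T)T + \tfrac{1}{14}\big((\tr T)^2+|T|^2\big)g = \tfrac17(\div X)g - \tfrac12\cL_X g$.

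The one genuinely non-mechanical step is assembling the second (non-trace-free) equation of \eqref{Eq:Condition.soliton} out of the scalar and the trace-free pieces. I would negate the $\Omega^4_{27}$ identity and move the $\tfrac{1}{14}\big((\tr T)^2+|T|^2\big)g$ term across, so that the coefficient of $g$ becomes $\tfrac{1}{14}\big((\tr T)^2+|T|^2\big) - \tfrac17\div X$; the $\Omega^4_1$ scalar identity gives exactly $\tfrac{1}{14}\big((\tr T)^2+|T|^2\big) - \tfrac17\div X = \tfrac{\lambda}{4}$, turning the right-hand side into $\tfrac{\lambda}{4}g + \tfrac12\cL_X g$ and producing $-\Ric + \tfrac12 T\circ T + (\tr T)T = \tfrac{\lambda}{4}g + \tfrac12\cL_X g$. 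As consistency checks I would verify that both $\ast\ri_{\varphi}$ arguments are genuinely trace-free, using $\tr(\cL_X g)=2\div X$ on one side and $\tr(T\circ T)=(\tr T)^2-|T|^2$ together with $\tr\Ric = R = (\tr T)^2 - |T|^2$ from \eqref{eq: Ricci_formula} on the other; this guarantees they land in $\Omega^4_{27}$ and that the comparison is legitimate. The main obstacle is purely bookkeeping, namely keeping the trace and trace-free parts straight so that $\lambda$ is correctly split between the $g$-coefficient and the scalar equation; there is no analytic difficulty, since both decomposition ingredients are already established.
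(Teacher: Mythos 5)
Your proof is correct and follows essentially the same route as the paper: both substitute Lemma \ref{Lemma. Laplacian Grigorian} and Proposition \ref{theorem.lie.derivative.psi} into \eqref{Eq.Soliton} and match irreducible components, with the $\Omega^4_7$ part giving the first equation directly. The only difference is bookkeeping: the paper absorbs the $\Omega^4_1$ and $\Omega^4_{27}$ data into a single term $\ri_{\psi}(\cdot)$ of a non-trace-free symmetric $2$-tensor via \eqref{eq:i_psi}, which performs automatically the recombination of the scalar and trace-free identities that you carry out by hand.
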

	\begin{proof}
	Using \eqref{eq: Laplacian.Grigorian}, \eqref{eq:i_psi} and Lemma \ref{Lemma. Laplacian Grigorian}, we obtain
		\begin{align*}
	\Delta_{\psi}\psi&=(\div T)^\flat\wedge\varphi+ \ri_{\psi} \Big(-\Ric+\frac{1}{2}T\circ T+(\tr T)T\Big).
	\end{align*}
On the other hand, by \eqref{eq:i_psi} and Proposition \ref{theorem.lie.derivative.psi} we have 
\begin{align*}
  \lambda\psi+\cL_X\psi=(-\frac{1}{2}\Curl X+X\lrcorner T  )^{\flat}\wedge\varphi+ \ri_{\psi}\Big(\frac{\lambda}{4}g +\frac{1}{2}(\cL_Xg)
		\Big)
\end{align*}
and thus we get \eqref{Eq:Condition.soliton}.
\end{proof}
	
\begin{remark}
   \begin{itemize}
       \item We notice that \eqref{Eq:Condition.soliton} coincides with the soliton equation for a general geometric flow given in \cite{fadel2022}*{Definition 1.52}.
       \item The tuple $(g,X,\lambda)$ is called a \emph{Ricci soliton} if it satisfies $\Ric=\lambda g+\cL_X g$. The second equation of \eqref{Eq:Condition.soliton} can be view as a perturbation of the Ricci soliton equation using the torsion tensor $T$. A similar remark was done by Lotay-Wei for the Laplacian flow \cite{Lotay2017}, but in contrast, the first equation of \eqref{Eq:Condition.soliton} coincides with one of the equation of the isometric soliton condition of the harmonic flow of $\gt$-structures \cite{Karigiannis2020}*{Definition 2.16}.
       \item From the second equation of \eqref{Eq:Condition.soliton} is natural to ask for solitons of the Laplacian coflow, inducing Ricci solitons, aside from the nearly parallel case where $\Delta\psi=\lambda^2\psi$ and $\Ric=\frac38\tau_0^2g$. For instance, in \cite{Moreno2019} the authors obtain an example of a Laplacian coflow soliton inducing a Ricci soliton on a solvable Lie group. 
   \end{itemize} 
\end{remark}

Using \eqref{Eq:Condition.soliton}, we can give an alternative proof for the non-existence of shrinking solitons in the compact case \cite{Karigiannis2012}*{Proposition 4.3}, and we extend this result to non-compact cases with $X$ divergence free:

\begin{cor}\label{cor: non-compact_shrinking_soliton}
\begin{enumerate}
    \item  There are no compact shrinking solitons of the Laplacian coflow.
    \item The only compact steady solitons of the Laplacian coflow are given by torsion free $\rG_2$-structures.
    \item There do not exist steady (non-trivial i.e. $X=0$) and shrinking solitons of the Laplacian coflow with $\div X=0$.
\end{enumerate}
\end{cor}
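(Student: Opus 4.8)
The plan is to reduce the whole Corollary to a single scalar identity obtained by tracing the second equation of the soliton system \eqref{Eq:Condition.soliton}, and then to argue either by integration (in the compact parts (1)--(2)) or pointwise (when $\div X=0$, in part (3)). Taking the $g$-trace of
$$-\Ric+\frac{1}{2}T\circ T+(\tr T)T=\frac{\lambda}{4}g+\frac{1}{2}\cL_X g,$$
I would use that $\tr\Ric=R$, that $\tr g=7$, and that $\tr(\cL_X g)=2\div X$. The one nontrivial ingredient is the trace of the circ product: writing $(T\circ T)_{ab}=\varphi_{amn}\varphi_{bpq}T^{mp}T^{nq}$ from \eqref{eq:products} and setting $a=b$, the one-index contraction $\varphi_{amn}\varphi_{apq}=g_{mp}g_{nq}-g_{mq}g_{np}+\psi_{mnpq}$ together with the symmetry of $T$ (which turns the first two terms into $(\tr T)^2$ and $|T|^2$) and the total antisymmetry of $\psi$ (which kills the last term) yields $\tr(T\circ T)=(\tr T)^2-|T|^2$. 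Since by \eqref{eq: Ricci_formula} we also have $R=(\tr T)^2-|T|^2$, everything collapses to the clean pointwise identity
$$\frac{1}{2}\big((\tr T)^2+|T|^2\big)=\frac{7\lambda}{4}+\div X.$$

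For parts (1) and (2) I would integrate this over the closed manifold $M$. The divergence term integrates to zero, leaving $\int_M \frac12((\tr T)^2+|T|^2)\,\dvol=\frac{7\lambda}{4}\vol(M)$. As the left side is non-negative, $\lambda\geq 0$, which rules out shrinking solitons and proves (1). For (2), setting $\lambda=0$ forces the non-negative integrand to vanish identically, so $|T|^2=0$ and $T\equiv 0$; a coclosed $\gt$-structure with vanishing full torsion is torsion-free.

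For part (3) the hypothesis $\div X=0$ lets me drop the divergence term and use the identity pointwise, namely $\frac{1}{2}((\tr T)^2+|T|^2)=\frac{7\lambda}{4}$. If $\lambda<0$ the non-negative left side cannot equal the negative right side, so no shrinking soliton exists, with no compactness assumption. If $\lambda=0$ the left side vanishes pointwise, again forcing $T\equiv 0$; then \eqref{eq: Ricci_formula} gives $\Ric=0$, and feeding $T=0$, $\Ric=0$ back into \eqref{Eq:Condition.soliton} yields $\Curl X=0$ and $\cL_X g=0$, so $X$ is Killing with $\Curl X=2X\lrcorner T=0$. By Proposition \ref{theorem.lie.derivative.psi} this is exactly the condition $\cL_X\psi=0$, i.e. the self-similar solution is static, so the steady soliton is trivial.

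The computations are routine once the trace identity is in hand; the only place demanding care is the reduction $\tr(T\circ T)=(\tr T)^2-|T|^2$, where one must combine the one-index $\varphi$-contraction (with the sign convention used in this paper) with the vanishing of the contraction of the totally antisymmetric $\psi$ against the symmetric $T^{mp}T^{nq}$. I expect this to be the main, though modest, obstacle; everything downstream is a sign-chase together with the divergence theorem.
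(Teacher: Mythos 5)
Your proposal is correct and follows essentially the same route as the paper: trace the second soliton equation using $\tr(T\circ T)=(\tr T)^2-|T|^2$ and $\tr\Ric=R$ to get $\tfrac12((\tr T)^2+|T|^2)=\tfrac{7\lambda}{4}+\div X$, then integrate over $M$ in the compact cases and argue pointwise when $\div X=0$. Your extra verification that the $\psi$-term drops out of $\tr(T\circ T)$, and your closing argument in part (3) that $T\equiv 0$ forces $X$ to be an infinitesimal symmetry via Proposition \ref{theorem.lie.derivative.psi}, are both sound and in fact slightly more detailed than the paper's own proof.
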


\begin{proof}
    Taking the trace on the second equation of \eqref{Eq:Condition.soliton}, we obtain
    \begin{equation}\label{eq: div X+ lambda}
        \frac{1}{2}\left((\tr T)^2+|T|^2\right)=\frac74 \lambda+\div X,
    \end{equation}
    since $\tr(T\circ T)=(\tr T)^2-|T|^2$ and $\tr \Ric = R$ (see \eqref{eq: Ricci_formula}). If $\div X=0$ then $\lambda\geq 0$. When the manifold $M$ is compact, we have
    $$
    \lambda\vol(M)=\frac27\int_M\left((\tr T)^2+|T|^2\right)\vol\geq 0.
    $$
    Hence, $\lambda > 0$ or $\lambda=0$ if and only if $T=0$. 
\end{proof}



\section{Almost Abelian Lie groups revisited}\label{Sec: almost abelian}
We study in this section the Laplacian coflow and its solitons in a class of solvable Lie group, which  has a codimension one Abelian ideal using the bracket flow as described in \cite{Lauret2016}.  

Let $G$ be a Lie group, it is called \emph{almost Abelian} if its Lie algebra $\fg$ admits an Abelian ideal $\fh$ of codimension $1$.  For $\dim G=7$, any invariant $\gt$-structure is completely determined by a $\gt$-structure on $\fg$. Moreover, since $\gt$ acts transitively on the $6$-sphere, thus, for any orthonormal basis $\{e_1,...,e_7\}$, we can suppose that $e_7 \perp \fh$ and that the $\gt$-structure has the form  
\begin{equation}\label{coclosed_almost_abelian_G2}
\varphi=\omega\wedge e^7+\rho^+=e^{127}+e^{347}+e^{567}+e^{135}-e^{146}-e^{245}-e^{236},
\end{equation}
where $\omega=e^{12}+e^{34}+e^{56}$ and $\rho_+=e^{135}-e^{146}-e^{245}-e^{236}$ are the canonical $\SU(3)$--structure of $\mathfrak{h}\cong \R^6$. Additionally, the induced dual $4$-form is

\begin{equation}
\psi:=\ast\varphi=\frac{1}{2}\omega^2+\rho_-\wedge e^7=e^{1234}+e^{1256}+e^{3456}-e^{2467}+e^{2357}+e^{1457}+e^{1367},
\end{equation}
where $\rho_-=J^\ast\rho_+$ and $J$ is the canonical complex structure on $\mathbb{R}^6$ defined by $\omega:=\langle J\cdot,\cdot\rangle$. Moreover, the Lie bracket of $\mathfrak{g}$ is encoded by $A\in \mathfrak{gl}(\R^6)$ where $A:=\ad(e_7)|_{\mathfrak{h}}$


The transitive action of  $\GL(\fg)$ on the space of $\gt$-structures, defined by $h\cdot\varphi:=(h^{-1})^\ast\varphi$  ( for $h\in \GL(\fg)$), yields an infinitesimal representation of the alternating $3$-form
\begin{equation}\label{eq: theta_model_varphi}
\Lambda^3(\fg)^\ast=\theta(\mathfrak{gl}(\mathfrak{g}))\varphi 
\end{equation}
where $\theta: \mathfrak{gl}(\mathfrak{g})\rightarrow\End(\Lambda^3\mathfrak{g}^\ast)$ is defined by
\begin{equation}\label{eq: theta_on_varphi}
    \theta(B)\varphi:=\frac{d}{dt}\Big|_{t=0}e^{tB}\cdot\varphi=-\varphi(B\cdot,\cdot,\cdot)-\varphi(\cdot,B\cdot,\cdot)-\varphi(\cdot,\cdot,B\cdot).
\end{equation}
Since the orbit $\GL(\fg)\psi$ is also open in $\Lambda^4\fg^*$, the relation \eqref{eq: theta_model_varphi} also holds for the $4$-forms and $\psi$, namely $\Lambda^4(\fg)^\ast=\theta(\mathfrak{gl}(\mathfrak{g}))\psi$. 
Coclosed $\gt$-structures on almost Abelian Lie algebras are equivalent with the Lie bracket constrain $A\in \mathfrak{sp}(\R^6)$ \cite{Freibert2012}, where $$
\mathfrak{sp}(\R^6)=\{A\in\mathfrak{gl}(\R^6): \quad AJ+JA^t=0  \quad \Leftrightarrow \quad \theta(A)\omega=0\}.$$

In particular, the non-vanishing torsion forms $\tau_0$ and $\tau_3$ can be described in terms of $A$:
\begin{prop}\cite{moreno2022}*{Prop. 3.2 \&  Cor 3.3}\label{torsion_forms}
	Let $(\mathfrak{g},A)$ be an almost Abelian Lie algebra with coclosed $\gt$-structure $\varphi$. Hence, the  torsion forms of $\varphi$ are
	\begin{align*}
	\tau_0=&\frac{2}{7}\tr(JA) \qandq 
	\tau_{27}=\left(\begin{array}{c|c}
	\frac{1}{14}\tr(JA)\rI_{6\times 6}-\frac{1}{2}[J,A] & 0 \\ \hline
	0 & -\frac{3}{7}\tr(JA)
	\end{array}
	\right).
    \end{align*}
    And its full torsion tensor is
	\begin{equation}\label{eq: torsion_tensor_A}
	T= \left(\begin{array}{c|c}
	\frac{1}{2}[J,A] & 0 \\ \hline
	0 & \frac{1}{2}\tr(JA)
	\end{array}
	\right).
    \end{equation}
    \end{prop}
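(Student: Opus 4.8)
The plan is to compute the full torsion tensor $T$ directly from its definition \eqref{Eq.nabla.varphi} and then read off $\tau_0$ and $\tau_{27}$ from \eqref{eq:torsion.coclosed}; since $\varphi$ is coclosed we have $\tau_1=\tau_2=0$, so the torsion forms are completely determined by $T$. Contracting \eqref{Eq.nabla.varphi} with $\psi$ and using $\psi_{mjkl}{\psi_b}^{jkl}=24\delta_{mb}$ gives the pointwise formula $T_{ab}=\tfrac14\langle\nabla_{e_a}\varphi,e_b\lrcorner\psi\rangle$, so the whole problem reduces to knowing the Levi-Civita connection of the left-invariant metric and the covariant derivative of $\varphi$.

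First I would compute $\nabla$ from the Koszul formula. Since the only non-trivial brackets are $[e_7,e_i]=Ae_i$ for $i\le 6$, writing $A=S+K$ with $S=\tfrac12(A+A^t)$ and $K=\tfrac12(A-A^t)$, one finds $\nabla_{e_7}e_7=0$, $\nabla_{e_7}e_i=Ke_i$, $\nabla_{e_i}e_7=-Se_i$ and $\nabla_{e_i}e_j=S_{ij}e_7$. Because every tensor in sight is left-invariant, $\nabla_{e_a}\varphi=\theta(\nabla_{e_a})\varphi$ with $\theta$ as in \eqref{eq: theta_on_varphi}, where $\nabla_{e_a}$ now denotes the endomorphism $Y\mapsto\nabla_{e_a}Y$ of $\fg$. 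Feeding in $\varphi=\omega\wedge e^7+\rho^+$ and using the coclosed constraint $A\in\mathfrak{sp}(\R^6)$ in the form $A^t=JAJ$ (equivalently $\theta(A)\omega=0$), a short computation yields $\nabla_{e_i}\varphi=-\omega\wedge(Se_i)^\flat+e^7\wedge\big((Se_i)\lrcorner\rho^+\big)$ for $i\le6$, and $\nabla_{e_7}\varphi=\theta(K)\omega\wedge e^7+\theta(K)\rho^+$.

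Next I would evaluate the three blocks of $T_{ab}=\tfrac14\langle\nabla_{e_a}\varphi,e_b\lrcorner\psi\rangle$ against $e_b\lrcorner\psi$, using $e_j\lrcorner\psi=\omega\wedge(e_j\lrcorner\omega)+(e_j\lrcorner\rho^-)\wedge e^7$ and $e_7\lrcorner\psi=-\rho^-$. The cross terms $T_{i7}$ vanish because $\omega\wedge\Lambda^1(\fh^*)$ is orthogonal to $\rho^\pm$, i.e.\ $\langle\omega\wedge\alpha,\rho^-\rangle=0$. For the $\fh$-block the key point is that under $A^t=JAJ$ the symmetric part $S$ anticommutes with $J$, so that $JS=\tfrac12[J,A]$ is itself symmetric; evaluating the $\SU(3)$ pairings $\langle\omega\wedge\alpha,\omega\wedge\beta\rangle=2\langle\alpha,\beta\rangle$ and $\langle X\lrcorner\rho^+,Y\lrcorner\rho^-\rangle$ then gives $T_{ij}=(JS)_{ij}=\tfrac12[J,A]_{ij}$. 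For the $(7,7)$ entry one uses that the $J$-commuting part is $K$, together with the identity $\langle\theta(K)\rho^+,\rho^-\rangle=-2\tr(JK)$ (checked on $K=J$ via $\theta(J)\rho^+=3\rho^-$), to obtain $T_{77}=-\tfrac14\langle\theta(K)\rho^+,\rho^-\rangle=\tfrac12\tr(JK)=\tfrac12\tr(JA)$, since $\tr(JS)=0$.

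Assembling the blocks yields the stated $T$, and then \eqref{eq:torsion.coclosed} produces the torsion forms: taking the trace, $\tr T=\tfrac12\tr(JA)$ gives $\tau_0=\tfrac47\tr T=\tfrac27\tr(JA)$, while $\tau_{27}=\tfrac{\tau_0}{4}g-T$ is exactly the displayed block matrix. The hard part will be the precise evaluation of the $\SU(3)$-invariant pairings of $\rho^\pm$ with $\omega\wedge\Lambda^1(\fh^*)$ and with the contractions $X\lrcorner\rho^\pm$: these fix the numerical constants and, on the $\fh$-block, the identification of the equivariant map $S\mapsto JS$. They are most safely handled either through the explicit contraction identities for $\varphi$ and $\psi$ recorded earlier, or by a Schur-type argument exploiting irreducibility of the relevant $\SU(3)$-modules.
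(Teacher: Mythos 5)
The paper does not actually prove this proposition: it is imported verbatim from \cite{moreno2022} (Prop.~3.2 and Cor.~3.3), so there is no internal argument to compare against. Your blind derivation is a correct, self-contained substitute, and the route you choose --- Koszul formula for the left-invariant connection, then $\nabla_{e_a}\varphi=\theta(\Lambda_a)\varphi$ with $\Lambda_a$ the connection endomorphism, then pairing against $e_b\lrcorner\psi$ via $T_{ab}=\tfrac14\langle\nabla_{e_a}\varphi,e_b\lrcorner\psi\rangle$ --- is exactly the kind of computation one would expect behind the cited result. I checked the main ingredients: the connection formulas $\nabla_{e_7}e_i=Ke_i$, $\nabla_{e_i}e_7=-Se_i$, $\nabla_{e_i}e_j=S_{ij}e_7$ are right; the constraint $A^t=JAJ$ does give $SJ=-JS$, hence $JS=\tfrac12[J,A]$ symmetric, and also $KJ=JK$, which is what forces $\theta(K)\omega=0$ and makes $\nabla_{e_7}\varphi$ proportional to $\rho^-$ (you use this implicitly for $T_{77}$; it would be worth stating, since it is also what kills $T_{7j}$). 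The normalizations $\langle\omega\wedge\alpha,\omega\wedge\beta\rangle=2\langle\alpha,\beta\rangle$ and $\langle X\lrcorner\rho^+,Y\lrcorner\rho^-\rangle=2\omega(X,Y)$ (the latter from \eqref{eq: rho_rho_2-index}) combine to give $T_{ij}=(JS)_{ij}$ with the two contributions entering with equal weight, and $\tr(JS)=0$ then yields $\tr T=\tfrac12\tr(JA)$, so $\tau_0=\tfrac47\tr T=\tfrac27\tr(JA)$ and $\tau_{27}=\tfrac{\tau_0}{4}g-T$ reproduces the stated block matrix, including the $-\tfrac37\tr(JA)$ entry. The only soft spot is the one you flag yourself: the sign of $\theta(J)\rho^+=3\rho^-$ and the orientation conventions for $\omega$ versus $J$ must be fixed consistently with the explicit frame \eqref{coclosed_almost_abelian_G2}, since an error there would flip the sign of $T_{77}$ relative to the $\fh$-block; anchoring these against the contraction identities in Appendix~A, as you propose, settles this.
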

Moreover, we can describe the Hodge Laplacian $\Delta \psi$ in function of $A\in \mathfrak{sp}(\R^6)$, hence according with Lemma \ref{Lemma. Laplacian Grigorian}, we first compute the tensor $T\circ T$ given in \eqref{eq:products}:

\begin{lemma}
	Let $(\fg,A)$ be an almost Abelian Lie algebra with coclosed $\gt$-structure $\varphi$. Denote by $S_A:=\frac{1}{2}(A+A^t)$ the symmetric part of $A$, then we have
	\begin{equation}\label{TcircT}
	T\circ T=\left(\begin{array}{c|c}
	-\frac{1}{2}(\tr JA)[J,A] - S_A\circ_6 S_A & 0 \\ \hline
	0 & -\tr S_A^2
	\end{array}\right),
	\end{equation}
	where $\circ_6$ is the product on $\mathfrak{gl}(\R^6)$, defined by $(S_A\circ_6S_A)_{ab}:=(S_A)_{mn}(S_A)_{pq}\rho^+_{mpa}\rho^+_{nqb}$.
\end{lemma}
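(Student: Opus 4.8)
The plan is to compute the tensor $T\circ T$ entrywise, directly from the definition $(T\circ T)_{ab}=\varphi_{amn}\varphi_{bpq}T^{mp}T^{nq}$ in \eqref{eq:products}, feeding in the explicit block form of $T$ from Proposition \ref{torsion_forms} together with the splitting $\varphi=\omega\wedge e^7+\rho^+$ from \eqref{coclosed_almost_abelian_G2}. Since $T_{i7}=0$, $T_{77}=\tfrac12\tr(JA)$ and the $6\times6$ block is $B:=\tfrac12[J,A]$, I would organise the contraction according to how many of the summation indices $m,n,p,q$ equal $7$. Any term that forces two copies of the index $7$ into a single $\varphi$ factor vanishes by antisymmetry, so only an ``all-six'' contribution survives (where each $\varphi$ enters through its $\rho^+$-part) together with ``mixed'' contributions (where one of the two $T$-factors is $T_{77}$ and the matching $\varphi$ enters through its $\omega$-part, via $\varphi_{ij7}=\omega_{ij}$).

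Before contracting I would record the algebraic consequences of $A\in\mathfrak{sp}(\R^6)$. From $AJ+JA^t=0$ one gets $A^t=JAJ$, whence $\tfrac12[J,A]=JS_A$ and $S_AJ=-JS_A$; in particular $B=JS_A$ is symmetric, and $BJ=S_A$, $JB=-S_A$. Recording also that the matrix $(\omega_{mn})$ equals $J^t=-J$, these identities are exactly what convert every expression produced by the contraction (which naturally involves $B=\tfrac12[J,A]$, $\omega\cong -J$ and the scalar $T_{77}$) back into the $S_A$-language of the statement.

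With this in hand the three groups of terms read as follows. The all-six term gives $\rho^+_{amn}\rho^+_{bpq}B^{mp}B^{nq}=(B\circ_6 B)_{ab}$ for $a,b\le6$. The two mixed terms each produce $\omega_{am}\omega_{bp}B^{mp}\,T_{77}=-T_{77}B_{ab}$, using $(\omega)B(\omega)^t=-B$ (which reduces to $JBJ=B$), so together they contribute $-\tr(JA)\,\tfrac12[J,A]_{ab}$. For the $(7,7)$ entry only the all-six term survives, giving $\omega_{mn}\omega_{pq}B^{mp}B^{nq}=-\tr(JBJB)=-\tr S_A^2$ via $JB=-S_A$. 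I would also verify that the mixed entries $(T\circ T)_{a7}$ with $a\le6$ vanish, so that $T\circ T$ is genuinely block diagonal; this follows from the same case analysis, since the surviving contractions there pair a $\rho^+$-factor against an $\omega$-factor and drop out. Assembling these pieces produces the block matrix \eqref{TcircT}, provided the all-six block is rewritten correctly.

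The main obstacle is precisely that final rewriting of the all-six block, namely the identity $B\circ_6 B=-(S_A\circ_6 S_A)$ with $B=JS_A$. To prove it I would invoke the $SU(3)$ contraction identity $\rho^+(Ju,Jv,w)=-\rho^+(u,v,w)$, the real part of the fact that $\rho^++i\rho^-$ is of type $(3,0)$ (so that replacing one argument by $J\cdot$ multiplies the form by $i$). Writing $B_{mn}=J_{mk}(S_A)_{kn}$ and substituting into $(B\circ_6 B)_{ab}=B_{mn}B_{pq}\rho^+_{mpa}\rho^+_{nqb}$, the two factors of $J$ both act on the first two slots of the single $\rho^+$ carrying the free index $a$, turning $\rho^+_{mpa}$ into $-\rho^+_{kla}$; after relabelling this is exactly $-(S_A\circ_6 S_A)_{ab}$. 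Keeping the orientation and the sign of $\omega\cong -J$ consistent with the fixed normalisation \eqref{coclosed_almost_abelian_G2}, together with the sign in the $(3,0)$-identity, is the only delicate bookkeeping; everything else is routine index manipulation, and combining it with the mixed and $(7,7)$ contributions completes the proof.
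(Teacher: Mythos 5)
Your proposal is correct and follows essentially the same route as the paper's proof: an entrywise computation of the contraction organised by how many summation indices equal $7$, the $\mathfrak{sp}(\R^6)$ identities (in particular $\tfrac12[J,A]=JS_A$ and $JBJ=B$), and the $(3,0)$-type identity $\rho^+(J\cdot,J\cdot,\cdot)=-\rho^+(\cdot,\cdot,\cdot)$ to convert the all-six block $B\circ_6 B$ into $-S_A\circ_6 S_A$ --- exactly the steps carried out in the paper via the contraction identities \eqref{SU3_identities}. The only point where you are terser than the paper is the vanishing of the $(a,7)$ entries: this is not a generic ``$\rho^+$ against $\omega$'' cancellation but rests on the fact that the surviving contraction pairs the \emph{symmetric} matrix $[J,A]^2=4S_A^2$ against the antisymmetric $\rho^-_{\,\cdot\,\cdot\, a}$ (after using $\rho^+_{ijp}{\omega^p}_k=\rho^-_{ijk}$), which deserves the one extra line the paper gives it.
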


\begin{proof}
	We first compute the entry $(T\circ T)_{77}$, thus by \eqref{eq: torsion_tensor_A} we have 
	\begin{align*}
	(T\circ T)_{77}=&T_{mn}T_{pq}\varphi_{mp7}\varphi_{nq7}=\frac{1}{4}[J,A]_{mn}[J,A]_{pq}\omega_{mp}\omega_{nq}\\
	=&\frac{1}{4}(J[J,A])_{np}([J,A]J)_{np}\\
	=&\frac{1}{4}\langle J[J,A],[J,A]J\rangle=-\tr S_A^2,
	\end{align*}
	for the last equality we used $A=JA^tJ$ (i.e. $A\in \mathfrak{sp}(\R^6)$). Now,
for $i\neq 7$ and $j=7$, we have
	\begin{align*}
	(T\circ T)_{i7}=&T_{mn}T_{pq}\varphi_{mpi}\omega_{nq}
 =\frac{1}{4}[J,A]_{mn}[J,A]_{pq}\rho^+_{mpi}\omega_{nq}\\
 =&\frac{1}{4}([J,A]J[J,A])_{mp}\rho^+_{mpi}
 =\frac{1}{4}([J,A]^2)_{mn}J_{np}\rho^+_{pmi}\\
 =&-\frac{1}{4}([J,A]^2)_{mn}\rho^-_{nmi}=0.
	\end{align*}
For the above computation, we used that $[J,A]\in \mathfrak{sp}(\R^6)$ is symmetric and \eqref{SU3_identities}. 
	Finally, for $i\neq 7$ and $j\neq 7$ we have
	\begin{align*}
	(T\circ T)_{ij}
	=&2T_{mn}T_{77}\omega_{mi}\omega_{nj}+T_{mn}T_{pq}\rho^+_{mpi}\rho^+_{nqj}\\
	=&\frac{1}{2}(\tr JA) [J,A]_{mn}J_{mi}J_{nj}+\frac{1}{4}[J,A]_{mn}[J,A]_{pq}\rho^+_{mpi}\rho^+_{nqj}\\
	=&-\frac{1}{2}(\tr JA)(J[J,A]J)_{ij} +(JS_A)_{mn}(JS_A)_{pq}\rho^+_{mpi}\rho^+_{nqj}\\
	=&-\frac{1}{2}(\tr JA) ([J,A])_{ij}+J_{mk}(S_A)_{kn}J_{pl}(S_A)_{lq}\rho^+_{mpi}\rho^+_{nqj}\\
	=&-\frac{1}{2}(\tr JA) ([J,A])_{ij}-(S_A)_{kn}(S_A)_{lq}\rho^+_{kli}\rho^+_{nqj}.
	\end{align*}
 Once again, we used the identities \eqref{SU3_identities} Finally, combining each case of $i$ and $j$, we get the expression for $T\circ T$.
\end{proof}

Now, for almost 
Abelian Lie algebras $(\fg,A)$ the Ricci curvature is  \cite{Arroyo2013}*{Eq (8)}:
\begin{equation}\label{Ricci_operator}
	\Ric_A=\left(\begin{array}{c|c}
	\frac{1}{2}[A,A^t] & 0 \\ \hline
	0 & -\tr S_A^2
	\end{array}\right) \qforq A\in \mathfrak{sp}(\R^6),
	\end{equation}
 and since $\tr(T)$ is constant, we have $\div T=0$. Therefore, we can write Lemma \ref{Lemma. Laplacian Grigorian} in function of the Lie bracket:
 
\begin{prop}\label{coflow_symbol}
	Let $(\fg,A)$ be an almost Abelian Lie algebra with coclosed $\gt$-structure $\varphi$. Thus, the Hodge Laplacian of $\psi$ is $\Delta_A\psi=\theta(Q_A)\psi$ where
		
		\begin{align}\label{homogeneous_Q}
		\begin{split}
		Q_A=&\left(\begin{array}{c|c}
		Q_A^\fh & 0 \\ \hline
		0   & q_A
		\end{array}\right)		=\left(\begin{array}{c|c}
		\frac{1}{2}[A,A^t]+\frac{1}{2}S_A\circ_6S_A & 0 \\ \hline
		0   & -\frac{1}{2}\tr(S_A)^2-\frac{1}{4}(\tr JA)^2
		\end{array}\right)
		\end{split}
		\end{align}
		In particular, $Q_A\in \mathfrak{gl}(\mathfrak{g})$ is symmetric.
\end{prop}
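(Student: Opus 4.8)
The plan is to observe that $\Delta_A\psi$ is an invariant $4$-form, hence lies in $\theta(\mathfrak{gl}(\fg))\psi$ by \eqref{eq: theta_model_varphi}, and to pin down $Q_A$ by matching the $\gt$-irreducible pieces delivered by Lemma \ref{Lemma. Laplacian Grigorian}. First I would record the dictionary between $\theta$ and the maps $\rivarphi,\ripsi$. Writing \eqref{eq: theta_on_varphi} in coordinates for the $4$-form gives $(\theta(B)\psi)_{ijkl}=-(B_i^m\psi_{mjkl}+B_j^m\psi_{imkl}+B_k^m\psi_{ijml}+B_l^m\psi_{ijkm})=-\ripsi(B)_{ijkl}$ for symmetric $B$; combined with \eqref{eq:i_psi} this yields $\ast\rivarphi(s)=\ripsi(\bar s)=\theta(s)\psi$ whenever $s$ is trace-free (so that $\bar s=-s$), together with the normalisation $\theta(cI_7)\psi=-4c\,\psi$ for a scalar $c$. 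Note that symmetric endomorphisms thereby produce exactly the $\Omega^4_1\oplus\Omega^4_{27}$ part while skew ones produce $\Omega^4_7$, which already explains why $Q_A$ must be symmetric.

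Next I would feed the almost-Abelian data into Lemma \ref{Lemma. Laplacian Grigorian}. Since $T$ has the constant block form \eqref{eq: torsion_tensor_A}, we have $\tr T=\tfrac12\tr(JA)$ constant, so $\rd\tr T=0$ and the $\Omega^4_7$-summand drops out (equivalently $\div T=0$). What survives is the scalar term $\tfrac{2}{7}((\tr T)^2+|T|^2)\psi$ together with $\ast\rivarphi(s)$ for $s=\Ric-(\tr T)T-\tfrac12 T\circ T+\tfrac1{14}((\tr T)^2+|T|^2)g$. A one-line trace computation using $\tr\Ric=(\tr T)^2-|T|^2$ and $\tr(T\circ T)=(\tr T)^2-|T|^2$ from \eqref{eq: Ricci_formula} shows $\tr s=0$, so the dictionary applies and $\ast\rivarphi(s)=\theta(s)\psi$.

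Converting the scalar term via $\tfrac{2}{7}((\tr T)^2+|T|^2)\psi=\theta\!\big(-\tfrac1{14}((\tr T)^2+|T|^2)I_7\big)\psi$ and adding, the two $\tfrac1{14}((\tr T)^2+|T|^2)$ contributions cancel, leaving $\Delta_A\psi=\theta(Q_A)\psi$ with $Q_A=\Ric-(\tr T)T-\tfrac12 T\circ T$. It then remains to substitute the explicit blocks \eqref{Ricci_operator}, \eqref{eq: torsion_tensor_A} and \eqref{TcircT}: in the $\fh$-block the term $-\tfrac14\tr(JA)[J,A]$ coming from $(\tr T)T$ cancels the term $+\tfrac14\tr(JA)[J,A]$ coming from $\tfrac12 T\circ T$, leaving $\tfrac12[A,A^t]+\tfrac12 S_A\circ_6 S_A$; in the $e_7$-entry one gets $-\tr S_A^2-\tfrac14(\tr JA)^2+\tfrac12\tr S_A^2=-\tfrac12\tr S_A^2-\tfrac14(\tr JA)^2$, which is exactly \eqref{homogeneous_Q}. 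Symmetry of $Q_A$ is immediate, since $\Ric$, $T$ and $T\circ T$ are all symmetric.

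The proof is mostly assembly of the earlier lemmas; the one place demanding care is the $\theta$–$\ri$ dictionary, where the sign in $\theta(B)\psi=-\ripsi(B)$ and the normalisation $\theta(I_7)\psi=-4\psi$ must be tracked so that the scalar $\Omega^4_1$-term and the $\tfrac1{14}((\tr T)^2+|T|^2)g$ piece inside $s$ cancel exactly. I expect this bookkeeping, rather than any genuine difficulty, to be the main thing to get right.
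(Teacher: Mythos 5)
Your proposal is correct and follows the same route as the paper, whose proof is just the one-line instruction to substitute \eqref{eq: torsion_tensor_A}, \eqref{Ricci_operator} and \eqref{TcircT} into Lemma \ref{Lemma. Laplacian Grigorian}; you additionally make explicit the translation $\theta(B)\psi=-\ripsi(B)$, $\theta(cI_7)\psi=-4c\psi$ that the paper leaves implicit, and your intermediate identity $Q_A=\Ric-(\tr T)T-\tfrac12 T\circ T$ together with the block cancellations checks out against \eqref{homogeneous_Q}.
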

\begin{proof}
The result follows by applying equations \eqref{eq: torsion_tensor_A}, \eqref{Ricci_operator} and  \eqref{TcircT} into Lemma \ref{Lemma. Laplacian Grigorian}.
\end{proof}

Notice that the closed condition on $\psi$ implies that $\Delta_\psi\psi=dd^\ast\psi$ is also closed.  Similarly, it is interpreted as $Q_A^\fh\in\mathfrak{sp}(\R^6)$ for $A\in\mathfrak{sp}(\R^6)$. Indeed:

\begin{lemma}\label{S_product_S}
	If $A\in \mathfrak{sp}(\R^6)$ is symmetric then $A\circ_6 A\in \mathfrak{sp}(\R^6)$.
\end{lemma}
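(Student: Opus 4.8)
The plan is to verify directly the defining condition of $\mathfrak{sp}(\R^6)$, i.e. $BJ+JB^t=0$ with $B=A\circ_6 A$. First I would record two preliminary reductions. Since $A$ is symmetric, relabelling the contracted indices $m\leftrightarrow n$, $p\leftrightarrow q$ in $(A\circ_6 A)_{ab}=A_{mn}A_{pq}\rho^+_{mpa}\rho^+_{nqb}$ and using $A_{mn}=A_{nm}$ interchanges $a$ and $b$, so $A\circ_6 A$ is itself symmetric; hence the condition to be checked collapses to the anticommutation relation $(A\circ_6 A)_{ac}J_{cb}+J_{ac}(A\circ_6 A)_{cb}=0$. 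Moreover, because $A\in\mathfrak{sp}(\R^6)$ is symmetric, the defining relation $AJ+JA^t=0$ becomes $A_{mn}J_{ns}=-J_{mn}A_{ns}$, i.e. $A$ anticommutes with $J$.

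The computation of the two terms rests on the $\SU(3)$-identities \eqref{SU3_identities} relating $\rho^+$ and $\rho^-$ through $J$, in the form $J_{am}\rho^+_{mbc}=-\rho^-_{abc}$ and $J_{am}\rho^-_{mbc}=\rho^+_{abc}$ (valid on any slot by total skew-symmetry of $\rho^\pm$). Contracting $J$ into the free index of $A\circ_6 A$ on each side and applying these relations converts exactly one factor of $\rho^+$ into $\rho^-$. After relabelling one finds
$$
(A\circ_6 A)_{ac}J_{cb}=F_{ab},\qquad J_{ac}(A\circ_6 A)_{cb}=-F_{ba},\qquad F_{ab}:=A_{mn}A_{pq}\rho^+_{mpa}\rho^-_{nqb},
$$
so that the sum equals $F_{ab}-F_{ba}$ and the entire claim reduces to showing that the tensor $F$ is symmetric.

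The decisive step — and the only place where the hypotheses on $A$ enter essentially — is the verification $F_{ab}=F_{ba}$. Here I would rewrite $\rho^-_{nqb}=-J_{ns}\rho^+_{sqb}$, move the resulting $J$ across the adjacent factor of $A$ using the anticommutation $A_{mn}J_{ns}=-J_{mr}A_{rs}$, and then feed that $J$ back into $\rho^+$ via $J_{mr}\rho^+_{mpa}=\rho^-_{rpa}$. Two applications of $J\rho^+=-\rho^-$ sandwiching the single use of $AJ=-JA$ turn $F_{ab}$ into $A_{mn}A_{pq}\rho^-_{mpa}\rho^+_{nqb}$, which is precisely $F_{ba}$. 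Hence $(A\circ_6 A)_{ac}J_{cb}+J_{ac}(A\circ_6 A)_{cb}=F_{ab}-F_{ba}=0$, and $A\circ_6 A\in\mathfrak{sp}(\R^6)$.

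I expect the main obstacle to be exactly this cancellation. Without invoking that $A$ anticommutes with $J$, the two terms reduce to the $\rho^+\!\leftrightarrow\!\rho^-$–swapped tensors $F_{ab}$ and $-F_{ba}$, which have no reason to cancel; it is the combination of $A=A^t$ with $A\in\mathfrak{sp}(\R^6)$ that forces $F$ to be symmetric and closes the argument. The only routine-but-delicate point is keeping track of the slot-dependent signs in \eqref{SU3_identities} when contracting $J$ against the third index of $\rho^+$, which I would handle by cyclically permuting the (totally skew) indices of $\rho^\pm$ back to the first slot before applying the identity.
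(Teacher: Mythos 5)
Your argument is correct and follows essentially the same route as the paper: both reduce, via the symmetry of $A\circ_6 A$, to an (anti)commutation condition with $J$, and both verify it by converting $\rho^+$ into $\rho^-$ through the identities \eqref{SU3_identities} and moving $J$ past $A$ using $AJ=-JA$ (the paper packages this as the equivalent check $J(A\circ_6 A)J=A\circ_6 A$, while you check $BJ+JB=0$ via the auxiliary tensor $F$). The difference is purely organisational, and your sign bookkeeping is consistent with the paper's conventions.
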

\begin{proof}
Notice that $B:=A\circ_6 A$ is symmetric, thus it is enough to prove the equality $JBJ=B$. Hence
	\begin{align*}
	(JBJ)_{ij}=&J_{ik}(A\circ_6A)_{kl}J_{lj}=J_{ik}A_{mn}A_{pq}\rho^+_{mpk}\rho^+_{nql}J_{lj}\\
    =&-(JAJ)_{mn}A_{pq}\rho^-_{mpi}\rho^-_{nqj}=-J_{mr}A_{rs}J_{sn}A_{pq}\rho^-_{mpi}\rho^-_{nqj}\\
    =&A_{rs}A_{pq}\rho^+_{rpi}\rho^+_{sqj}=B_{ij}
    \end{align*}
	Later, we used the identities \eqref{SU3_identities} time and again, as well as the symmetry of $A$.     
\end{proof}

\subsection{The bracket flow} 
In this section we adapt the general approach of geometric flows of homogeneous geometric structures,  proposed by J. Lauret, to the framework of the Laplacian coflow \eqref{Eq.coflowLaplacian} on almost Abelian Lie algebras with coclosed $\gt$-structures, for a broad exposition see  \cite{Lauret2016}.

Let $\{\varphi(t)\}_{t\in(\epsilon_1,\epsilon_2)}$ be a solution of the Laplacian coflow on $(\fg,A)$ with initial condition $\varphi(0)=\varphi_0$. Since $\varphi(t)\in \GL(\fg)\varphi_0$, we can write $\varphi(t)=h(t)^*\varphi_0$ for $h(t)\in \GL(\fg)$ satisfying $h(0)=I$. Since $\ast_{\varphi(t)}=(h^{-1})^**_{\varphi_0}h^*$ (see \cite{Moreno2019}*{Lemma 3.1}), we can write $\psi(t)=h(t)^*\psi_0$ for $\psi_0=\ast_{\varphi_0}\varphi_0$ and  by Proposition \ref{coflow_symbol}, we have $$\Delta_A\psi(t)=\theta(Q_A(t))\psi(t),$$
hence, the Laplacian coflow is equivalent with
\begin{equation}\label{endomorphism_flow}
    \frac{d}{dt}h(t)=-h(t)Q_{A}(t)
\end{equation}

\begin{definition}
    Let $(G_1,\varphi_1)$ and $(G_2,\varphi_2)$ be Lie groups with $\gt$-structure $\varphi_i$ (for $i=1,2$). An isomorphism $f:(G_1,\varphi_1)\rightarrow (G_2,\varphi_2)$ is called an \emph{equivariant isomorphism} if it is a Lie group isomorphism such that $\varphi_1=f^*\varphi_2$ and this case, $(G_1,\varphi_1)$ and $(G_2,\varphi_2)$ are called \emph{equivariant equivalent}.
\end{definition}

 Since $\varphi(t)=\ast_t\psi(t)$ induces a $\SU(3)$-structure on $\fh$ for each $t$, we can write 
\begin{equation}\label{eq: h_endomorphism}
    h(t)=k(t)+a(t)e^7\otimes e_7 \qwhereq k(t)\in \Gl(\R^6) \qandq a(t)\in \R^*.
\end{equation}
Thus, we can defined the bracket $A(t)=a(t)^{-1}k(t)Ak(t)^{-1}$ such that \eqref{eq: h_endomorphism} becomes a Lie algebra isomorphism between  $(\fg,A,\varphi(t))$ and $(\fg,A(t),\varphi)$ such that $\varphi(t)=h(t)^*\varphi$. Moreover, since $\Delta_A\psi(t)=h(t)^\ast\Delta_{A(t)}\psi$ we get the relation $Q_{A(t)}=h(t)Q_A(t)h(t)^{-1}$ and consequently, the equation \eqref{endomorphism_flow} becomes an ODE on $(\fg,A(t),\varphi)$ 
\begin{align}\label{eq: evolution of h with A(t)}
    \frac{d}{dt}h(t)=-Q_{A(t)}h(t)
\end{align}
In particular, under the flow \eqref{eq: evolution of h with A(t)} the bracket $A(t)$ evolves by:
\begin{equation}\label{bracket_flow}
    \frac{d}{dt}A(t)=q_{A(t)}A(t)-[Q_{A(t)}^\fh,A(t)],
\end{equation}
where $q(t)$ and $Q_{A(t)}^\fh$ are defined in \eqref{homogeneous_Q} for each $t\in (\epsilon_1,\epsilon_2)$.
The ODE \eqref{bracket_flow} is known as the \emph{bracket flow} and it provides an equivalent analysis of the geometric flow of homogeneous geometric structures, varying the Lie bracket instead of the geometric structure:

\begin{theorem}[\cite{Lauret2016}*{Theorem 5}]\label{eq: Lauret_theorem 1}
Let $\{\varphi(t)\}_{t\in(\epsilon_1,\epsilon_2)}$ be a solution of the Laplacian coflow on $(\fg,A)$ with initial condition $\varphi(0)=\varphi_0$. Then, there exist an equivariant isomorphism $f(t): (G_A,\varphi(t))\rightarrow (G_{A(t)},\varphi)$, such that $h(t)=df(t)_1$ solves either \eqref{endomorphism_flow} or \eqref{eq: evolution of h with A(t)} for all $t\in (\epsilon_1,\epsilon_2)$. 
    In addition, the solutions of \eqref{Eq.coflowLaplacian} and \eqref{bracket_flow} are
    $$
      \varphi(t)=h(t)^\ast\varphi \qandq A(t)=a(t)k(t)^{-1}Ak(t),
    $$
    respectively,  for $t\in(\epsilon_1,\epsilon_2)$.
\end{theorem}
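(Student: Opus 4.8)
The bulk of the construction needed for this statement is already in place in the discussion preceding it; the proof amounts to collecting those facts and supplying the two points not yet argued in detail, namely the block form of the gauge $h(t)$ and its integration to a genuine Lie group isomorphism $f(t)$. The guiding principle is the naturality of the Hodge Laplacian under isomorphisms, recorded above as $Q_{A(t)}=h(t)Q_A(t)h(t)^{-1}$, which is exactly what lets one trade the evolution of the structure on the fixed algebra $(\fg,A)$ for the evolution of the bracket on the fixed structure $\varphi$.

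First I would lift the flow to $\GL(\fg)$. Because the orbit $\GL(\fg)\psi_0$ is open in $\Lambda^4\fg^*$ and coincides with the set of admissible $4$-forms, the solution $\psi(t)$ stays in it, and its velocity $\Delta_A\psi=\theta(Q_A)\psi$ lies in the tangent direction $\theta(\mathfrak{gl}(\fg))\psi$ by Proposition \ref{coflow_symbol} and the $4$-form analogue of \eqref{eq: theta_model_varphi}. Hence $\psi(t)=h(t)^*\psi_0$ for a smooth curve $h(t)$ with $h(0)=I$, and I would pin the lift down (the stabiliser of $\psi_0$ is $\gt$, so a choice is needed) by declaring $h(t)$ to be the solution of the linear ODE \eqref{endomorphism_flow}, $\dot h=-hQ_A$, the infinitesimal form of $\dot\psi=\theta(Q_A)\psi$. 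Since $Q_A$ is block diagonal by Proposition \ref{coflow_symbol} and $h(0)=I$, uniqueness for this linear system forces $h(t)$ to remain block diagonal, yielding the decomposition \eqref{eq: h_endomorphism} together with $\dot k=-kQ_A^\fh$ and $\dot a=-a\,q_A$.

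Next I would build $f(t)$ and transport the equation. With $A(t)=a(t)^{-1}k(t)Ak(t)^{-1}$ as above, a direct check on generators (using that $\fh$ is abelian and $[e_7,v]_A=Av$, together with the block form of $h$) shows that $h(t)$ is a Lie algebra isomorphism $(\fg,A)\to(\fg,A(t))$. As almost Abelian Lie groups are simply connected (diffeomorphic to $\R^7$), the integrability of Lie algebra homomorphisms produces a unique Lie group isomorphism $f(t)\colon G_A\to G_{A(t)}$ with $df(t)_1=h(t)$; left-invariance then promotes the Lie-algebra identity $h(t)^*\varphi=\varphi(t)$ to $f(t)^*\varphi=\varphi(t)$, so $f(t)$ is equivariant. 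Substituting the naturality relation $Q_{A(t)}=hQ_Ah^{-1}$ into \eqref{endomorphism_flow} rewrites it as \eqref{eq: evolution of h with A(t)}, $\dot h=-Q_{A(t)}h$.

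Finally I would differentiate the conjugation formula for $A(t)$, feeding in $\dot k=-kQ_A^\fh$ and $\dot a=-a\,q_A$ and converting the fixed-algebra operators into $Q_{A(t)}^\fh$ and $q_{A(t)}$ through naturality; after collecting terms the time derivative collapses into the claimed form \eqref{bracket_flow}, $\dot A=q_{A(t)}A-[Q_{A(t)}^\fh,A]$. The assignment $\varphi(t)\mapsto A(t)$ and its inverse $A(t)\mapsto h(t)^*\varphi$ are then mutually inverse bijections between solutions of \eqref{Eq.coflowLaplacian} and of \eqref{bracket_flow} sharing the same maximal interval $(\epsilon_1,\epsilon_2)$, since each is governed by an equivalent system of ODEs. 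I expect the only real obstacle to be bookkeeping: carrying out the differentiation of $A(t)=a^{-1}kAk^{-1}$ and matching it termwise against \eqref{bracket_flow}, where it is precisely the identity $Q_{A(t)}=hQ_Ah^{-1}$ that makes the time-dependent conjugations telescope into the stated scaling and commutator terms.
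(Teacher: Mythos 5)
Your proposal is correct and follows essentially the same route as the paper, which does not prove this theorem independently but cites Lauret's general framework and assembles exactly the ingredients you list in the discussion preceding the statement (open orbit, the lift $\dot h=-hQ_A$, the block form \eqref{eq: h_endomorphism}, naturality $Q_{A(t)}=h Q_A h^{-1}$, and differentiation of the conjugated bracket to get \eqref{bracket_flow}); your extra care with the uniqueness argument for block-diagonality and the integration of $h(t)$ to $f(t)$ on the simply connected group is a welcome filling-in of details the paper leaves implicit. Note only that your derivation produces $A(t)=a(t)^{-1}k(t)Ak(t)^{-1}$, matching the paper's preceding discussion and consistent with \eqref{bracket_flow}, whereas the theorem statement writes $A(t)=a(t)k(t)^{-1}Ak(t)$ --- an internal inconsistency of the paper (the formula for the inverse isomorphism), not a gap in your argument.
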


 Theorem \ref{eq: Lauret_theorem 1} provides a useful tool for addressing long-time existence and regularity questions, since it shows that the Laplacian coflow and the bracket flow have the same maximal interval of solution. Hence, the bracket flow \eqref{bracket_flow} is explicitly given by 

\begin{prop}
    Let $\mathcal{L}\simeq \fg\fl(\R^6)$ be the family of $7$-dimensional almost Abelian Lie algebras. The subfamily $\cL_{coclosed} \simeq\mathfrak{sp}(\mathbb{R}^6)\subset \mathcal{L}$ of coclosed $\gt$-structures is invariant under the bracket flow \eqref{bracket_flow}, which becomes equivalent to the following ODE for a one-parameter family of matrices $A=A(t)\in \mathfrak{sp}(\mathbb{R}^6)$:
	\begin{align}
	\begin{split}\label{matrix_ODE}
	\frac{d}{dt}A=&-\Big(\frac{1}{2}\tr(S_A)^2+\frac{1}{4}(\tr JA)^2\Big)A+\frac{1}{2}[A,[A,A^t]]+\frac{1}{2}[A,S_A\circ_6S_A]
	\end{split}
	\end{align}
\end{prop}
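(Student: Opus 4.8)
The plan is to treat the two assertions of the proposition separately: first that the bracket flow \eqref{bracket_flow} becomes the explicit ODE \eqref{matrix_ODE} once the endomorphism $Q_A$ is inserted, and second that the coclosed locus $\mathfrak{sp}(\R^6)$ is preserved by this flow.

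For the first assertion I would simply substitute the block expressions for $q_A$ and $Q_A^\fh$ from Proposition \ref{coflow_symbol}, equation \eqref{homogeneous_Q}, into \eqref{bracket_flow}. The scalar term $q_A A$ reproduces $-\bigl(\frac{1}{2}\tr(S_A)^2+\frac{1}{4}(\tr JA)^2\bigr)A$ verbatim, and the antisymmetry $[B,A]=-[A,B]$ turns $-[Q_A^\fh,A]$ into $\frac{1}{2}[A,[A,A^t]]+\frac{1}{2}[A,S_A\circ_6 S_A]$, which is exactly the remaining part of \eqref{matrix_ODE}. This step is purely formal.

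For the invariance, the key structural point is that $\mathfrak{sp}(\R^6)=\{A:AJ+JA^t=0\}$ is simultaneously a linear subspace and a Lie subalgebra of $\mathfrak{gl}(\R^6)$; because it is linear, it suffices to check that the right-hand side of \eqref{matrix_ODE} lies in $\mathfrak{sp}(\R^6)$ whenever $A$ does. The first summand is a scalar multiple of $A$, hence symplectic. For the commutators I would first record that $A\in\mathfrak{sp}(\R^6)$ forces $A^t\in\mathfrak{sp}(\R^6)$, which follows from $A^t=JAJ$ (a direct consequence of the defining relation together with $J^2=-I$ and $J^t=-J$); consequently $S_A=\frac{1}{2}(A+A^t)\in\mathfrak{sp}(\R^6)$. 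Closure of $\mathfrak{sp}(\R^6)$ under the bracket then gives $[A,A^t]\in\mathfrak{sp}(\R^6)$, while Lemma \ref{S_product_S}, applied to the symmetric element $S_A$, gives $S_A\circ_6 S_A\in\mathfrak{sp}(\R^6)$. Hence $Q_A^\fh=\frac{1}{2}[A,A^t]+\frac{1}{2}S_A\circ_6 S_A\in\mathfrak{sp}(\R^6)$, and since $A$ is also symplectic, one final use of bracket-closure shows $[Q_A^\fh,A]\in\mathfrak{sp}(\R^6)$. Assembling these, the whole right-hand side stays in $\mathfrak{sp}(\R^6)$, so the flow is invariant.

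The main obstacle is the invariance step, and within it the statement that the $\circ_6$-product keeps us inside $\mathfrak{sp}(\R^6)$; but since that is precisely the content of Lemma \ref{S_product_S}, the remaining work reduces to the elementary verifications that $A^t$ and $S_A$ are symplectic and that $\mathfrak{sp}(\R^6)$ is bracket-closed. Alternatively, invariance could be deduced abstractly from the fact that the Laplacian coflow preserves the coclosed condition together with the identification of coclosed invariant $\gt$-structures with $\mathfrak{sp}(\R^6)$ recalled before Proposition \ref{torsion_forms}; I would nonetheless prefer the direct algebraic check above, as it is self-contained and reuses the machinery already developed.
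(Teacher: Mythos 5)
Your proposal is correct and follows essentially the same route as the paper: the paper likewise obtains \eqref{matrix_ODE} by direct substitution of \eqref{homogeneous_Q} into \eqref{bracket_flow}, and establishes invariance of $\mathfrak{sp}(\R^6)$ by observing that the velocity lies in $\mathfrak{sp}(\R^6)$, with Lemma \ref{S_product_S} supplying the only non-obvious ingredient $S_A\circ_6 S_A\in\mathfrak{sp}(\R^6)$. Your version merely spells out the elementary facts (that $A^t$, $S_A$ and the commutators stay symplectic) that the paper leaves implicit.
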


\begin{proof}
	Notice that the velocity $\dot{A}(t)=q_{A(t)}A+[A,Q_{A(t)}^\fh]$ lies in $\mathfrak{sp}(\mathbb{R}^6)$, since $S_A\circ_6S_A\in\mathfrak{sp}(\mathbb{R}^6)$ by Lemma \ref{S_product_S}, hence, the family $\cL_{coclosed}\subset \mathcal{L}$ is invariant under the bracket flow. Finally, replacing \eqref{homogeneous_Q} into \eqref{bracket_flow},  we obtain \eqref{matrix_ODE}.
	\end{proof}

\begin{prop}\label{evolution_of_the_norm}
	If $A(t)$ is a bracket flow solution associated to the Laplacian coflow, then its norm evolves by
	\begin{align}\label{evolution_of_the_norm2}
	\frac{d}{dt}|A|^2=&-\Big(|S_A|^2+\frac{1}{2}(\tr JA)^2\Big)|A|^2-|[A,A^t]|^2-\langle S_A\circ_6S_A,[A,A^t]\rangle
	\end{align}
\end{prop}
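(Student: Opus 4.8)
The plan is to differentiate the squared norm directly along the flow. Writing $|A|^2=\langle A,A\rangle$ for the trace inner product $\langle X,Y\rangle=\tr(XY^t)$ on $\fg\fl(\R^6)$, the chain rule gives $\tfrac{d}{dt}|A|^2=2\langle A,\tfrac{d}{dt}A\rangle$, so it suffices to pair $A$ against each of the three terms on the right-hand side of the bracket flow ODE \eqref{matrix_ODE} and add the contributions.

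First I would dispatch the scalar term. Multiplying $-\big(\tfrac12\tr(S_A)^2+\tfrac14(\tr JA)^2\big)A$ by $2\langle A,\cdot\rangle$ produces $-\big(\tr(S_A)^2+\tfrac12(\tr JA)^2\big)|A|^2$, and since $S_A$ is symmetric we have $\tr(S_A^2)=|S_A|^2$; this is precisely the first term of \eqref{evolution_of_the_norm2}.

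The two commutator terms are the heart of the matter, and both are handled by a single algebraic identity: for the trace inner product,
\begin{equation*}
    \langle A,[A,X]\rangle=-\langle [A,A^t],X\rangle \qforq X\in\fg\fl(\R^6).
\end{equation*}
This follows from a short cyclic-trace manipulation, namely $\langle A,[A,X]\rangle=\tr\big(A(X^tA^t-A^tX^t)\big)=\tr\big((A^tA-AA^t)X^t\big)=\langle [A^t,A],X\rangle$, together with $[A^t,A]=-[A,A^t]$. Applying it with $X=[A,A^t]$, which is symmetric so that $\langle[A,A^t],[A,A^t]\rangle=|[A,A^t]|^2$, collapses $\langle A,[A,[A,A^t]]\rangle$ to $-|[A,A^t]|^2$; applying it with $X=S_A\circ_6 S_A$ collapses $\langle A,[A,S_A\circ_6 S_A]\rangle$ to $-\langle S_A\circ_6 S_A,[A,A^t]\rangle$. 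After accounting for the factors of $\tfrac12$ in \eqref{matrix_ODE} and the overall factor of $2$ from differentiating the norm, these reproduce exactly the second and third terms of \eqref{evolution_of_the_norm2}.

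I do not expect a serious conceptual obstacle: once the adjoint identity above is in place, the proof reduces to bookkeeping of the factors of $2$ and $\tfrac12$ and of the signs and transposes in the cyclic rearrangement. The only structural inputs beyond this are the symmetry of $[A,A^t]$ and of $S_A\circ_6 S_A$ (the latter supplied by Lemma \ref{S_product_S}), which is what guarantees that the paired inner products collapse to the stated scalar quantities rather than leaving residual antisymmetric pieces.
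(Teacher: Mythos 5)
Your proposal is correct and follows essentially the same route as the paper: differentiate $|A|^2=\tr(AA^t)$ along the flow, substitute the bracket flow ODE \eqref{matrix_ODE}, and use the cyclic-trace identity $\langle A,[A,X]\rangle=-\langle [A,A^t],X\rangle$ to collapse the two commutator terms. The only cosmetic difference is that you isolate this adjoint identity explicitly, whereas the paper performs the trace manipulations inline; the symmetry of $S_A\circ_6 S_A$ is in fact not needed for the collapse, since the trace inner product is already symmetric.
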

\begin{proof}
	From equation \eqref{matrix_ODE}, we have
	\begin{align*}
	\frac{d}{dt}|A|^2=&2\langle \dot{A},A\rangle=2\tr(\dot{A}A^t)\\
	=&-\Big(\tr(S_A)^2+\frac{1}{2}(\tr JA)^2\Big)|A|^2 +\tr([A,[A,A^t]]A^t)+\tr([A,S_A\circ_6S_A],A^t)\\
	=&-\Big(|S_A|^2+\frac{1}{2}(\tr JA)^2\Big)|A|^2-|[A,A^t]|-\langle S_A\circ_6S_A,[A,A^t]\rangle
	\end{align*}
\end{proof}	

In order to proof long time existence solution for \eqref{matrix_ODE} we need the following identity.

\begin{lemma}\label{S_circ_S}
	For the symmetric part $S_A$ of the matrix $A\in\mathfrak{sp}(\mathbb{R}^6)$, we have
	$$
	|S_A\circ_6 S_A|^2=4(|S_A|^2|S_A|^2-2|S_A^2|^2-\langle JS_A,S_A\rangle^2).
	$$
\end{lemma}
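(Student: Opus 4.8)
The plan is to expand $|S_A\circ_6 S_A|^2$ straight from the definition of $\circ_6$ and then collapse the resulting product of four copies of $\rho^+$ with the $SU(3)$ contraction identities \eqref{SU3_identities}. Writing $S:=S_A$ and using $(S\circ_6 S)_{ab}=S_{mn}S_{pq}\rho^+_{mpa}\rho^+_{nqb}$, the square becomes
\[
|S\circ_6 S|^2=S_{mn}S_{pq}S_{m'n'}S_{p'q'}\,\big(\rho^+_{mpa}\rho^+_{m'p'a}\big)\big(\rho^+_{nqb}\rho^+_{n'q'b}\big),
\]
so the whole computation rests on the single-index contraction $\rho^+_{mpa}\rho^+_{m'p'a}$, which by \eqref{SU3_identities} has the ``direct minus cross'' shape $\delta_{mm'}\delta_{pp'}-\delta_{mp'}\delta_{pm'}-J_{mm'}J_{pp'}+J_{mp'}J_{pm'}$.

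First I would carry out the inner contraction: feeding $S_{mn},S_{pq},S_{m'n'},S_{p'q'}$ into $\rho^+_{mpa}\rho^+_{m'p'a}$ yields an intermediate tensor $R_{nq,n'q'}$ that has exactly the same direct-minus-cross shape, but now built from the symmetric matrix $P:=S^2$ in place of $\delta$ and from $SJS$ in place of $J$. Here the hypothesis $A\in\mathfrak{sp}(\R^6)$ is what makes the rest work: from $AJ+JA^t=0$ one checks $JS=-SJ$, hence $SJS=-S^2J$, the matrix $Q:=S^2J$ is skew-symmetric, and $J$ commutes with the even power $S^2$. These three facts are the engine of the simplification.

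Then I would contract $R_{nq,n'q'}$ against the same combination of $\delta\delta$ and $JJ$ terms in the indices $n,q,n',q'$. Because $J$ commutes with $S^2$, every trace that appears reduces to either $\tr(S^2)^2=|S_A|^2|S_A|^2$ or $\tr(S^4)=|S_A^2|^2$; the computations that do the collapsing are $\tr(S^2JS^2J)=\tr(S^4J^2)=-\tr(S^4)$, together with $\tr(S^2J)=0$ and $\tr(J)=0$. Grouping the sixteen elementary contractions into the four outer blocks, each block evaluates to $|S_A|^2|S_A|^2-2|S_A^2|^2$, and the four add up to $4(|S_A|^2|S_A|^2-2|S_A^2|^2)$. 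Finally, since $S^2$ is symmetric and $J$ is skew one has $\langle JS_A,S_A\rangle=\tr(JS_A^2)=0$, so the result may be recorded in the stated form $4(|S_A|^2|S_A|^2-2|S_A^2|^2-\langle JS_A,S_A\rangle^2)$.

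The main obstacle is bookkeeping rather than anything conceptual: keeping all signs correct across the sixteen index contractions generated by the two direct-minus-cross expansions, and reducing each $J$-laden trace without error. What organizes the calculation is the observation that $R$ reproduces the algebraic shape of the contraction identity itself with $(\delta,J)$ replaced by $(S^2,S^2J)$, so that all four outer blocks can be handled uniformly; the anticommutation $JS=-SJ$ coming from $A\in\mathfrak{sp}(\R^6)$ is the one ingredient that forces every surviving trace down to $|S_A|^2|S_A|^2$ and $|S_A^2|^2$. If one prefers to avoid indices altogether, an alternative is to note that both sides are $SU(3)$-invariant quartic polynomials in $S_A$ and to verify the identity on a normal form for symmetric elements of $\mathfrak{sp}(\R^6)$; but the direct contraction stays closest to the methods already used for \eqref{TcircT} and Lemma \ref{S_product_S}.
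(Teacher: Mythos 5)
Your proposal is correct and is essentially the paper's own proof: the paper simply asserts that the identity ``follows by direct computations, using the contractions \eqref{SU3_identities} and \eqref{eq: rho+rho+}'', and your expansion of the two $\rho^+\rho^+$ contractions, the reduction of the intermediate tensor to the same direct-minus-cross shape in $(S_A^2,\,S_AJS_A)$, and the collapse of all traces via $JS_A=-S_AJ$ is exactly that computation carried out (each of the four outer blocks indeed yields $|S_A|^4-2|S_A^2|^2$). Your side remark that $\langle JS_A,S_A\rangle=\tr(JS_A^2)=0$, so the last term of the stated formula vanishes identically, is also correct and consistent with the result.
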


\begin{proof}
	This identity follows by direct computations, using the contractions \eqref{SU3_identities} and \eqref{eq: rho+rho+}.
\end{proof}

\begin{theorem}
	The Laplacian coflow solution $(\fg,A,\varphi(t))$ starting at any coclosed (non-flat) $\gt$-structure is defined for all $t\in (\epsilon_1,\infty)$.
\end{theorem}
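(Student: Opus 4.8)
The plan is to reduce the claim to an a priori bound on the norm of the Lie bracket and then to invoke the standard escape criterion for ordinary differential equations. By Theorem \ref{eq: Lauret_theorem 1}, the Laplacian coflow solution $\varphi(t)$ and the bracket flow solution $A(t)$ of \eqref{matrix_ODE} share the same maximal interval of existence $(\epsilon_1,\epsilon_2)$, and the latter is an ODE on the finite-dimensional vector space $\mathfrak{sp}(\R^6)$ whose right-hand side is polynomial, hence smooth on the whole space. Consequently, if $\epsilon_2<\infty$, the escape lemma forces $|A(t)|\to\infty$ as $t\uparrow\epsilon_2$. It therefore suffices to show that $t\mapsto|A(t)|^2$ stays bounded on finite intervals; I will establish the stronger fact that it is non-increasing, so that $|A(t)|^2\le|A(0)|^2$ throughout and blow-up is impossible.

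The first step is to invoke the norm evolution \eqref{evolution_of_the_norm2} and to argue that its right-hand side is non-positive. The two terms $-\big(|S_A|^2+\tfrac12(\tr JA)^2\big)|A|^2$ and $-|[A,A^t]|^2$ are manifestly $\le 0$, so the entire difficulty lies in controlling the sign-indefinite cross term $-\langle S_A\circ_6 S_A,[A,A^t]\rangle$. I would bound it from below by Cauchy--Schwarz, $\langle S_A\circ_6 S_A,[A,A^t]\rangle\ge-|S_A\circ_6 S_A|\,|[A,A^t]|$, and then read the resulting quantity
$$
\big(|S_A|^2+\tfrac12(\tr JA)^2\big)|A|^2+|[A,A^t]|^2-|S_A\circ_6 S_A|\,|[A,A^t]|
$$
as a quadratic in $u:=|[A,A^t]|$. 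This quadratic is non-negative for every $u$ precisely when its discriminant is non-positive, that is, when
$$
|S_A\circ_6 S_A|^2\le 4\big(|S_A|^2+\tfrac12(\tr JA)^2\big)|A|^2 .
$$

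The key algebraic input is Lemma \ref{S_circ_S}, which gives $|S_A\circ_6 S_A|^2=4\big(|S_A|^4-2|S_A^2|^2-\langle JS_A,S_A\rangle^2\big)\le 4|S_A|^4$. Combining this with the elementary Frobenius-norm bound $|S_A|^2\le|A|^2$ (the symmetric part never exceeds the whole matrix, since $A=S_A+\tfrac12(A-A^t)$ is an orthogonal splitting) yields $4|S_A|^4\le 4|S_A|^2|A|^2\le 4\big(|S_A|^2+\tfrac12(\tr JA)^2\big)|A|^2$, so the discriminant condition holds and $\tfrac{d}{dt}|A|^2\le 0$. This produces the uniform bound $|A(t)|\le|A(0)|$, contradicting the escape of $|A(t)|$ to infinity, and hence $\epsilon_2=\infty$, proving immortality.

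The main obstacle is exactly this indefinite cross term, and its control rests entirely on the two negative correction terms $-2|S_A^2|^2-\langle JS_A,S_A\rangle^2$ supplied by Lemma \ref{S_circ_S}: without them there would be no slack in the discriminant estimate and the quadratic argument would collapse. Finally, I would trace through the equality cases to note that the inequality is strict unless $S_A=0$ and $\tr JA=0$ (equivalently $T=0$, the flat case excluded by hypothesis), since for $S_A\neq0$ one has $|S_A^2|^2>0$ and otherwise the term $\tfrac12(\tr JA)^2|A|^2$ is strictly positive; this recovers the strict decrease of $|A(t)|^2$ stated in the abstract, although only the non-strict bound is needed for immortality.
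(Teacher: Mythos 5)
Your proposal is correct and follows essentially the same route as the paper: reduce to the bracket flow via Theorem \ref{eq: Lauret_theorem 1}, show $\frac{d}{dt}|A|^2\le 0$ from \eqref{evolution_of_the_norm2} by controlling the cross term $-\langle S_A\circ_6 S_A,[A,A^t]\rangle$ with Cauchy--Schwarz and Lemma \ref{S_circ_S}, and conclude by boundedness of $|A(t)|$. Your discriminant-of-a-quadratic packaging is just a reformulation of the paper's Peter--Paul step $ab\le \frac{a^2}{4}+b^2$, and your equality analysis matches the paper's identification of the stationary case with the torsion-free one.
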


\begin{proof}
Let $\varphi(t)$ a solution of the Laplacian coflow defined for all $t\in(\epsilon_1,\epsilon_2)$, according to Theorem \ref{eq: Lauret_theorem 1}, we get that the solution $A(t)\in \mathfrak{sp}(\R^6)$ of \eqref{matrix_ODE} is defined for all $t\in (\varepsilon_1,\varepsilon_2)$. 
 Now, using the Cauchy-Schwarz and Peter-Paul inequalities (i.e. $ab\leq \frac{a^2}{4}+ b^2$ for $a,b\geq 0$), we have
\begin{align}\label{CSPP-inequality}
\begin{split}
-\langle S_A\circ_6S_A,[A,A^t]\rangle\leq& |S_A\circ_6S_A||[A,A^t]|\\
\leq&  \frac{|S_A\circ_6S_A|^2}{4}+|[A,A^t]|^2= |S_A|^2|S_A|^2-2|S_A^2|^2-\langle JS_A,S_A\rangle^2+|[A,A^t]|^2.
\end{split}
\end{align}
Replacing the last inequality into equation \eqref{evolution_of_the_norm2}, we have
\begin{align*}
\frac{d}{dt}{|A|^2}\leq&-\Big(|S_A|^2+\frac{1}{2}(\tr JA)^2\Big)|A|^2-|[A,A^t]|^2+|S_A|^2|S_A|^2-2|S_A^2|^2-\langle JS_A,S_A\rangle^2+|[A,A^t]|^2\\
=&-|S_A|^2|S_A|^2-\frac{1}{4}|S_A|^2|A-A^t|^2-\frac{1}{2}(\tr JA)^2|A|^2+|S_A|^2|S_A|^2-2|S_A^2|^2-\langle JS_A,S_A\rangle^2\\
=&-\frac{1}{4}|S_A|^2|A-A^t|^2-\frac{1}{2}(\tr JA)^2|A|^2-2|S_A^2|^2-\langle JS_A,S_A\rangle^2\leq 0
\end{align*}
Thus, $|A|^2$ is non-increasing and non-negative, therefore $A(t)$ is an inmortal solution, i.e. it is defined for all $t\in(\varepsilon_1,\infty)$. In particular, $|A|^2$ is  strictly decreasing unless $(\fg,A(t),\varphi)$ is torsion free, that is 
$$
\dot{|A|^2}=0 \quad \Leftrightarrow \quad A^t=-A \qandq \tr JA=0
$$
and thus $A(t)\equiv A_0\in \mathfrak{sl}(\C^3)\cap \mathfrak{sp}(\R^6)=\su(3)$ the bracket flow solution is constant.
\end{proof}

\begin{remark}
	 In \cite{Fino2018} Bagaglini and Fino address also the Laplacian coflow on almost Abelian Lie algebras, there the approach is different  from ours, the authors find explicit solutions of the Laplacian coflow when $A\in \mathfrak{sp}(\mathbb{R}^6)$ is normal.  Notice that the above Corollary holds for any $A\in  \mathfrak{sp}(\mathbb{R}^6)$. 
\end{remark}
	
\begin{prop}\label{Prop. Example}
  Let $(\mathfrak{g},A)$ be an almost Abelian algebra with the matrix $A$ defined by 
  \begin{equation}\label{Eq. Matrix.EDO}
	    A=\left[
	    \begin{array}{c|c}
	    B & 0 \\ \hline
	    0 & -B^t
	    \end{array}
	    \right] \qwithq B=\left[\begin{array}{ccc}
	    0 & x & 0 \\ 
	    y & 0 & 0 \\
	    0 & 0 & 0
	    \end{array}
	    \right] \qandq x,y\in\R
	\end{equation}
 Then, the bracket flow $A(t)$ given by \eqref{matrix_ODE} is stable. 
\end{prop}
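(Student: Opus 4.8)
The plan is to use that the given family is a two-dimensional linear slice of $\mathfrak{sp}(\R^6)$ and to show it is invariant under the bracket flow \eqref{matrix_ODE}, so that the flow becomes a planar ODE in $(x,y)$. First I would evaluate the three terms of \eqref{matrix_ODE} at $A=A(x,y)$. With $S_A=\tfrac12\diag(B+B^t,-(B+B^t))$ one gets $\tr(S_A)^2=(x+y)^2$ and $\tr JA=0$, so the scalar term is $-\tfrac12(x+y)^2A$; and a direct computation gives $[A,A^t]=(x^2-y^2)\diag(1,-1,0,-1,1,0)$, whence $\tfrac12[A,[A,A^t]]$ stays in the family. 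The subtle term is $\tfrac12[A,S_A\circ_6S_A]$: by Lemma \ref{S_product_S}, $S_A\circ_6S_A\in\mathfrak{sp}(\R^6)$, and carrying out the $\rho^+$-contraction \eqref{TcircT} I expect it to be a diagonal matrix supported exactly on the two coordinates untouched by $A$, so that the commutator $[A,S_A\circ_6S_A]$ vanishes. Granting this, the right-hand side reduces to $-\tfrac12(x+y)^2A+\tfrac12[A,[A,A^t]]$, the family is invariant, and the induced planar system is $\dot x=-\tfrac12 x(x+y)(3x-y)$, $\dot y=-\tfrac12 y(x+y)(3y-x)$.

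Next I would study this planar system. Its set of equilibria is precisely the line $\{x+y=0\}$, i.e. the skew-symmetric (torsion-free) brackets in $\su(3)$, consistent with the immortality theorem. As a Lyapunov function I would take $|A|^2=2(x^2+y^2)$: by Proposition \ref{evolution_of_the_norm} it satisfies $\tfrac{d}{dt}|A|^2=-2(x+y)^2(3x^2-4xy+3y^2)\le 0$, strictly negative off the equilibrium line since $3x^2-4xy+3y^2$ is positive definite. Changing to $u=x+y$, $v=x-y$ the system becomes $\dot u=-\tfrac12 u(u^2+2v^2)$ and $\dot v=-\tfrac32 u^2 v$; hence $|u|$ decreases strictly to $0$ while $v$ retains its sign and is monotone and bounded, so $v\to v_\infty$. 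Every trajectory therefore converges to a single equilibrium $(0,v_\infty)$ on $\{x+y=0\}$.

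Finally I would turn this convergence into the stability statement. Linearising at an equilibrium $(u,v)=(0,v_0)$ gives Jacobian $\diag(-v_0^2,0)$, so for $v_0\ne 0$ the fixed line is normally attracting, the neutral direction being tangent to the line of equilibria; this yields Lyapunov stability of each such equilibrium and asymptotic stability of the whole fixed-point line, while at the fully degenerate origin one falls back on the monotone quantities $|A|^2$ and $|v|$. The main obstacle is the explicit evaluation of the $\rho^+$-contraction $S_A\circ_6S_A$, needed both to prove invariance of the family and to pin down the reduced ODE; once the vanishing of $[A,S_A\circ_6S_A]$ is verified, the remaining planar phase-plane analysis is routine.
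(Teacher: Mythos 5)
Your proposal is correct and, after the (common, essentially forced) reduction to a planar ODE, it takes a genuinely different route to stability than the paper. The one step you leave conditional does check out: in the convention $\omega=e^{14}+e^{25}+e^{36}$, $\rho^+=e^{123}-e^{156}+e^{246}-e^{345}$ (the one for which $A=\mathrm{diag}(B,-B^t)\in\mathfrak{sp}(\R^6)$), the contraction gives $S_A\circ_6S_A=(x+y)^2\,\diag(0,0,-1,0,0,1)$, supported on the coordinates annihilated by $A$, so $[A,S_A\circ_6S_A]=0$ and the family is invariant; you should carry this computation out explicitly rather than leave it as an expectation. Your reduced system $\dot x=-\tfrac12x(x+y)(3x-y)$, $\dot y=-\tfrac12y(x+y)(3y-x)$ agrees with a direct evaluation of \eqref{matrix_ODE}; the paper's displayed system \eqref{Eq:system1} is exactly $4$ times this, an overall time rescaling that is immaterial for stability (the paper also misstates the sign of $\dot x=-6x^3$ on the axis $y=0$, a harmless slip). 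From there the two arguments diverge: the paper identifies the nullclines, integrates the homogeneous trajectory equation to get the first integral $H=(y-x)^2/(x^3y^3)$, and uses the Lyapunov function $V=(x+y)^2$, whose derivative $\dot V=-4(x+y)^2(3x^2-2xy+3y^2)\le 0$ vanishes exactly on the equilibrium line, yielding stability of that line as a set; you instead use the monotone quantity $|A|^2$ (consistent with Proposition \ref{evolution_of_the_norm}) together with the decoupling $u=x+y$, $v=x-y$, $\dot u=-\tfrac12u(u^2+2v^2)$, $\dot v=-\tfrac32u^2v$, which gives $u\to0$ and $v\to v_\infty$ monotonically, hence convergence of \emph{every} trajectory to a single equilibrium, and then the Jacobian $\diag(-v_0^2,0)$ at $(0,v_0)$ exhibits the equilibrium line as normally attracting, with the degenerate origin handled by the decreasing norm. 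Your version buys a strictly stronger conclusion (pointwise convergence and Lyapunov stability of each individual equilibrium, not just of the equilibrium set), at the cost of the change of variables; the paper's first integral $H$, which your argument does not use, is extra qualitative information about the trajectories rather than a necessary ingredient.
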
  
\begin{proof}
    The bracket flow \eqref{matrix_ODE} being $A(t)$ the matrix given by \eqref{Eq. Matrix.EDO} can be analized by the following nonlinear system $\dot{\bf{x}}=\bf{f}(\bf{x})$ where $\bf{f}: \mathbb{R}^2\rightarrow\mathbb{R}^2$; $\bf{x}\mapsto \bf{f}=(f_1(\bf{x}),f_2(\bf{x}))$ given by
	\begin{equation}\label{Eq:system1}
	 \dot{x}=-2x(3x-y)(x+y) \qandq \dot{y}=2y(x-3y)(x+y).
	\end{equation}
A point $\bx\in \R^2$ is an \emph{equlibrium point} if $\bf{f} (x)=0$ which is given by the surface $S=\{(x,y)\in\R^2: x=-y\}$. The $x$-nullclines (i.e, $\bx\in\R^2$ where $\bf{f}_1(\bx)=0$) are the lines $x=0$, $x=-y$ and $y=3x$ 
and the $y$-nullclines (i.e, $\bx\in \R^2$ where $\bf{f}_2(x)=0$) are the lines $y=0$, $x=-y$ and $y=\frac{1}{3}x$. The intersection of the $x$-nulcines and $y$-nulclines yield the equilibrium points.
\begin{figure}
    \centering
    \includegraphics[width=0.5\textwidth]{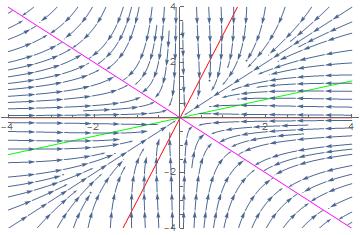}
    \caption{\textcolor{green}{x-nulcline}, \textcolor{red}{y-nulcline}, \textcolor{magenta}{equilibrium points}}
    \label{fig:my_label}
\end{figure}
On the other hand, the lines $y=0$ and $x=0$ are invariants for the system \eqref{Eq:system1}. If we set $y=0$ then we obtain $\dot{x}=-6x^3$. Therefore, $\dot{x}$ is positive if $x>0$ and negative if $x<0$ which clearly shows the stability along the line $y=0$.  

To determine the trajectories,  if $\bf{x_0}$ is not a equilibrium point  then, at least one of $f_1(\bf{x}_0)$ or $f_2(\bf{x}_0)$ is not zero. Let us suppose that $f_1(\bf{x}_0)\neq 0$. Then, there is an open neighborhood of $\bf{x}_0$, such that $f_1(\bf{x}_0)\neq 0$, so the orbit through $\bf{x}_0$ can be defined as a solution of the nonautonomous scalar equation 
\begin{equation}
    \frac{dy}{dx}=-\frac{y(x-3y)}{x(3x-y)}.
\end{equation}
This differential equation is homogeneous. Setting $y=xv(x)$, we obtain
\begin{align*}
    v+x\frac{dv}{dx}=&-\frac{v(1-3v)}{3-v}.
\end{align*}
    That is, 
\begin{equation*}
    x\frac{dv}{dx}=4v\left(\frac{v-1}{3-v}\right).
\end{equation*}
The resulting ODE is separable, with solution $x^{-4}v^{-3}(v-1)^2=c$. Reverting back to the original variables, the trajectories are level curves of 
\begin{equation*}
    H(x(t),y(t))=\frac{(y(t)-x(t))^2}{y(t)^3x(t)^3}.
\end{equation*}
On the other hand, let
$$ V(\bx)=x^2+y^2+2xy,$$
be a Lyapunov function. In fact, $V(\bx)=0$ when $\bx$ is an equilibrium point for this system and $V({x})=(x+y)^2\geq 0$ if $\bf{x}$ is not an equilibrium point. Computing $\dot{V}(\bx)$, we find 
\begin{equation}
   \dot{V}(\bx)=-2(x+y)^2(6x^2-4xy+6y^2), 
\end{equation}
where $\dot{V}(x)=0$ if $x=-y$ and $\dot{V}(x)\leq 0$ otherwise. For any curve $\gamma(r,\theta)=(r\cos\theta, r\sin\theta)$ with $r>0$ and $0\leq\theta\leq 2\pi$, we obtain 
\begin{equation}
    \dot{V}(\gamma(r,\theta))=-2r^2(r\sin\theta+r\cos\theta)^2(6-2\sin(2\theta))\leq 0,
\end{equation}
since $|\sin(2\theta)|\leq 1$ then we have $6-2\sin(\theta)>0$.  Therefore, the system is stable if $\bx_0$ is a equilibrium point. 
\end{proof}

 



\begin{prop}\label{gap_scalar_curvature}
	The scalar curvature $R(t)$ of an invariant Laplacian coflow solution starting at a non-flat coclosed $\gt$-structure is strictly increasing and satisfies the inequality
 \begin{equation}\label{eq: scalar_inequality}
	\frac{1}{-\frac{|t|}{2}+\frac{1}{R(0)}}\leq R(t)\leq 0 \quad \text{for} \quad \text{any} \quad t\in (\varepsilon_1,\infty),
	\end{equation}
	In particular, $|T|^2$ is strictly decreasing.  and it converges to zero when $t\rightarrow \infty$ as $|A(t)|^2\rightarrow 0$.
\end{prop}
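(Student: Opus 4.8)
The plan is to reduce the whole statement to one scalar differential inequality for $R(t)$. First I would use Theorem \ref{eq: Lauret_theorem 1}: the equivariant isomorphism $f(t)\co(G_A,\varphi(t))\to(G_{A(t)},\varphi)$ is an isometry, so $R(t)$ equals the scalar curvature of $(\fg,A(t),\varphi)$, which by tracing the Ricci operator \eqref{Ricci_operator} and using $\tr[A,A^t]=0$ is
\[
R=-\tr S_A^2=-|S_A|^2\le 0 ,
\]
with equality iff $S_A=0$. Since torsion-free brackets ($A\in\su(3)$) are fixed points of \eqref{matrix_ODE} and the initial datum is non-flat, uniqueness gives $R(t)<0$ for all finite $t$; this already yields the upper bound $R\le 0$.

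The heart of the argument is the evolution of $R$. I would differentiate $R=-|S_A|^2$ along the flow, so that $\dot R=-2\langle S_A,\dot A\rangle$, and insert $\dot A=q_AA+[A,Q_A^{\fh}]$ from \eqref{bracket_flow}, with $Q_A^{\fh}=\tfrac12[A,A^t]+\tfrac12 S_A\circ_6S_A$ and $q_A=-\tfrac12|S_A|^2-\tfrac14(\tr JA)^2$. Writing $A=S_A+K_A$ for the symmetric/skew splitting, one has $\langle S_A,A\rangle=|S_A|^2$, and the adjoint identity $\langle S_A,[A,B]\rangle=\langle B,[A^t,S_A]\rangle$ together with $[A^t,S_A]=[S_A,K_A]=-\tfrac12[A,A^t]$ collapses the commutator contribution, giving
\[
\dot R=|S_A|^4+\tfrac12(\tr JA)^2|S_A|^2+\tfrac12|[A,A^t]|^2+\tfrac12\langle S_A\circ_6S_A,[A,A^t]\rangle .
\]
This is structurally parallel to \eqref{evolution_of_the_norm2} (with $|A|^2$ replaced by $|S_A|^2$ and the last two terms halved), and the only sign-indefinite term is the last one.

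To extract a sign I would bound that cross term exactly as in the long-time-existence estimate: Cauchy–Schwarz and Peter–Paul give $-\langle S_A\circ_6S_A,[A,A^t]\rangle\le\tfrac14|S_A\circ_6S_A|^2+|[A,A^t]|^2$, and Lemma \ref{S_circ_S} rewrites $\tfrac14|S_A\circ_6S_A|^2=|S_A|^4-2|S_A^2|^2-\langle JS_A,S_A\rangle^2$. Substituting, the indefinite and the $|[A,A^t]|^2$ terms cancel and I am left with a manifestly nonnegative lower bound
\[
\dot R\ge\tfrac12|S_A|^4+\tfrac12(\tr JA)^2|S_A|^2+|S_A^2|^2+\tfrac12\langle JS_A,S_A\rangle^2\ge\tfrac12 R^2 .
\]
Since $R<0$ this forces $\dot R>0$, i.e. $R$ is strictly increasing; and the scalar comparison with $\dot y=\tfrac12 y^2$, $y(0)=R(0)$, integrates to $R(t)\ge\big(-\tfrac t2+\tfrac1{R(0)}\big)^{-1}$, which is the claimed lower bound for $t\ge 0$. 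Being increasing and bounded above by $0$, $R$ converges to some $L\le 0$; if $L<0$ then $\dot R\ge\tfrac12L^2>0$ for all large $t$, contradicting boundedness, so $L=0$ and $|S_A|^2=-R\to 0$.

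Finally, for the torsion I would use $|T|^2=|S_A|^2+\tfrac14(\tr JA)^2$ (from \eqref{eq: torsion_tensor_A}); since $|S_A|^2\to 0$, it remains to control $\tr JA$. The refined inequality obtained in the long-time-existence proof, $\tfrac{d}{dt}|A|^2\le-\tfrac12(\tr JA)^2|A|^2\le 0$, makes $(\tr JA)^2$ integrable in time, and a Barbalat-type argument then yields $\tr JA\to 0$, whence $|T|^2\to 0$ and the monotone decay of $|T|^2$ follows by combining $\tfrac{d}{dt}|S_A|^2\le 0$ with the decay of the $\tr JA$ term. I expect the main obstacle to be the bookkeeping in the second and third steps — reducing $\langle S_A,[A,Q_A^{\fh}]\rangle$ to $[A,A^t]$-terms and bounding the single cross term — though this reuses \eqref{CSPP-inequality} and Lemma \ref{S_circ_S} almost verbatim. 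A secondary subtlety is that the comparison only delivers the stated bound cleanly for $t\ge 0$; the regime $t<0$ (and the convergence $\tr JA\to 0$) needs the separate arguments just indicated.
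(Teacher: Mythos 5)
Your proof follows essentially the same route as the paper's: both differentiate the scalar curvature $R=-|S_A|^2$ along the bracket flow \eqref{matrix_ODE}, reduce the commutator contribution to $\langle Q_A^{\fh},[A,A^t]\rangle$ via $[S_A,A-A^t]$, apply the Cauchy--Schwarz/Peter--Paul estimate \eqref{CSPP-inequality} together with Lemma \ref{S_circ_S} to obtain $\dot R\ge\tfrac12R^2$, and integrate the resulting Riccati comparison, and your evolution formula for $\dot R$ agrees with the paper's (which works with $\tr(A+A^t)^2=-4R$ instead). The only substantive differences are at the margins: the paper integrates the same differential inequality backwards to cover $t<0$ (which you defer), and your justification that $R(t)<0$ --- needed before dividing by $R$ --- is not quite right as stated, since $S_A=0$ does not force $A\in\su(3)$ (e.g.\ $A=J\in\fu(3)$ is non-flat with $R\equiv0$), a degenerate case the paper itself silently excludes by writing $1/R(0)$.
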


\begin{proof}
	From \eqref{Ricci_operator}, we have $R=-\tr S_A^2=-\frac{1}{4}\tr(A+A^t)^2$. Thus, using the bracket flow equation \eqref{matrix_ODE} we have
	\begin{align*}
	\frac{d}{dt}\tr (A+A^t)^2=& 2\tr\Big((A+A^t)\frac{d}{dt}(A+A^t)\Big)\\
	=& - \Big(\tr S_A^2+\frac{1}{2}(\tr JA)^2\Big)\tr(A+A^t)^2+\tr\Big((A+A^t)[A-A^t,[A,A^t]]\Big)\\
	&+\tr\Big((A+A^t)[A-A^t,S_A\circ_6 S_A]\Big)\\
	=& - 4\Big(\tr S_A^2+\frac{1}{2}(\tr JA)^2\Big)\tr S_A^2+\tr\Big([A+A^t,A-A^t][A,A^t]\Big)\\
	&+\tr\Big([A+A^t,A-A^t]S_A\circ_6 S_A\Big)\\
	=& - 2\Big( |S_A|^2+(\tr JA)^2\Big)| S_A|^2-2|[A,A^t]|^2-2\langle[A,A^t]S_A\circ S_A\rangle
	\end{align*}
	Using the inequality \eqref{CSPP-inequality}, we obtain
	\begin{align*}
	\frac{d}{dt}\tr (A+A^t)^2\leq& - 2\Big(2| S_A|^2+(\tr JA)^2\Big)| S_A|^2+2| S_A|^4-4|S_A^2|^2-2(\langle  JS_A,S_A\rangle)^2\\
	\leq& -2(\tr S_A^2)^2=-\frac{1}{8}(\tr(A+A^t)^2)^2.
	\end{align*}
	For any $t_1,t_2\in (\varepsilon_1,\infty)$ satisfying $t_1\leq t_2$, the last inequality implies
    \begin{equation*}
        \frac{1}{R(t_2)}-\frac{1}{R(t_1)}\geq \frac{t_2-t_1}{2}.
    \end{equation*}
    If $t_1=0$ then we get
    \begin{equation*}
        \frac{1}{-\frac{t_2}{2}+\frac{1}{R(0)}}\leq R(t_2)<0 \quad \text{any} \quad t_2\in [0,\infty).
    \end{equation*}
    If $t_2=0$ then we obtain
    \begin{equation*}
        \frac{1}{\frac{t_1}{2}+\frac{1}{R(0)}}\leq R(t_1)<0 \quad \text{any} \quad t_1\in (\varepsilon_1,0].
    \end{equation*}
	 Finally, by \eqref{eq: Laplacian.Grigorian} and  \eqref{eq: torsion_tensor_A}, the scalar curvature of a coclosed $\gt$-structure is 
  
  $$
	  R_A=-|T|^2+(\tr(JA))^2.
	$$
	Hence, using the Cauchy-Schwarz inequality, we have
	\begin{align*}
	    |T|^2\leq -R(t)+|J|^2|A(t)|^2=-R(t)+6|A(t)|^2\leq \frac{1}{\frac{|t|}{2}-\frac{1}{R(0)}}+6|A(t)|^2.
	\end{align*}
	Therefore, $|T|^2$ is strictly decreasing, since $|A(t)|^2$ is strictly decreasing as well and $|T|^2$ goes to zero as $|A(t)|\rightarrow 0$.
\end{proof}

\subsection{Algebraic solitons} 
In this section, we  characterise the invariant $\gt$-structures on almost Abelian Lie algebras which are solitons of the Laplacian coflow, in terms of the Lie bracket $A\in \mathfrak{sp}(\R^6)$.

The self-similar solution \eqref{eq: self-similar_solution} on the almost Abelian Lie group $G_A$ is invariant if $\lambda(t)\in \R^\ast$ and
$f(t)\in \Aut(G_A)$, then \eqref{eq: self-similar_solution} corresponds with the solution
\begin{equation}\label{eq: invariant_self-similar_solution}
    \psi(t)=\lambda(t)h(t)^\ast\psi\in \Lambda^4\fg^\ast \qwithq \lambda(t)\in\R^\ast \qandq h(t)\in\Aut(\fg,A),
\end{equation}
with $df(t)_{1}=h(t)$ and then, the soliton equation \eqref{Eq.Soliton} becomes $\Delta_\psi\psi=\lambda\psi+\cL_{X_D}\psi\in\Lambda^4\fg^\ast$ with $\lambda\in\R$ and $X_D:=\frac{d}{dt}|_{t=0}h(t)=:-D\in\der(\fg,A)$. Using the representation \eqref{eq: theta_model_varphi}, we have
\begin{align*}
    \theta(Q_A)\psi=&\Delta_\psi\psi=\lambda\psi+\cL_{X_D}\psi\\
    =&\theta\left(-\frac{\lambda}{4}I_7\right)\psi+\frac{d}{dt}|_{t=0}h(t)^\ast\psi\\
    =&\theta\left(-\frac{\lambda}{4}I_7+D\right)\psi.
\end{align*}
By Proposition \ref{coflow_symbol}, the matrix $Q_A$ is symmetric, hence, setting $\lambda=-4c$ we say that $\psi$ is a \emph{semi-algebraic} soliton if 
\begin{equation*}
    Q_A=cI_7+\frac{1}{2}(D+D^t),
\end{equation*}
and $\psi$ is an \emph{algebraic soliton} if $D^t\in \der(\fg,A)$. Moreover, the self-similar solution \eqref{eq: invariant_self-similar_solution} is given
\begin{equation*}
    \lambda(t)=(1-2ct)^2 \qandq h(t)=e^{-s(t)D} \qwhereq s(t)=-\frac{1}{2c}\log|2ct-1|
\end{equation*}
(For $c=0$ set $s(t)=t$). And the corresponding bracket solution of a semi-algebraic soliton is 
\begin{align}\label{eq: Auto_similar_sol_bracket}
    A(t)=(1-2ct)^{-1/2}e^{s(t)E}Ae^{-s(t)E} \qwhereq E=\frac{1}{2}(D-D^t),
\end{align}
(e.g. \cite{lafuente2014}*{Remark 3.4} for the homogeneous Ricci soliton case). 
The next Theorem shows the (semi-) algebraic soliton equation in terms $A\in \mathfrak{sp}(\mathbb{R}^6)$.

\begin{theorem}\label{alge_soliton_prop}
	Let $(\fg,A,\varphi)$ be an almost Abelian Lie algebra with coclosed $\gt$-structure:
	\begin{itemize}
	    \item[(i)] $\psi$ 	is an algebraic soliton for the Laplacian coflow if and only if 
	\begin{equation}\label{eq: algebraic_soliton-condition}
		[[A,A^t]+S_A\circ_6 S_A,A]=\frac{|[A,A^t]|^2+\langle S_A\circ_6 S_A,[A,A^t]\rangle}{|A|^2}A
	\end{equation}
	In this case, $D=Q_A-cI_7\in \der(\fg, A)$ for 
	\begin{equation}\label{constant_c}
		c=-\frac{1}{2}\Big(\tr S_A^2+\frac{1}{2}(\tr JA)^2+\frac{|[A,A^t]|^2}{|A|^2}+\frac{\langle S_A\circ_6 S_A,[A,A^t]\rangle}{|A|^2}\Big).
	\end{equation}
	 \item[(ii)] $\psi$ is a semi-algebraic soliton if and only if
	 \begin{equation}\label{semi_algebraic_eq}
		[A,A^t]+S_A\circ_6 S_A=-\Big(\tr S_A^2-\frac{1}{2}(\tr JA)^2+2d\Big)I_6+D_1+D_1^t,
 	\end{equation}
    for some $D_1\in \mathfrak{gl}(\R^6)$ such that $[D_1,A]=dA$, where
    $$
      d=\frac{|[A,A^t]|^2+\langle S_A\circ_6 S_A, [A,A^t]\rangle}{2|A|^2}.
    $$
    In this case $Q_A=cI_7+\frac{1}{2}(D+D^t)$ for 
    \begin{equation}\label{eq: constant_c_semialgebraic}
      c=-\frac{1}{2}\Big(\tr S_A^2+\frac{1}{2}(\tr JA)^2+\frac{|[A,A^t]|^2}{|A|^2}+\frac{\langle S_A\circ_6 S_A,[A,A^t]\rangle}{|A|^2}\Big)
    \end{equation}
	\end{itemize}
\end{theorem}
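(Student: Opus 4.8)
The plan is to use the reduction established just above the statement, namely that $\psi$ is a semi-algebraic soliton precisely when $Q_A=cI_7+\tfrac12(D+D^t)$ for some $D\in\der(\fg,A)$ (with $\lambda=-4c$), and to match this identity blockwise against $Q_A=\operatorname{diag}(Q_A^\fh,q_A)$ from Proposition \ref{coflow_symbol}. First I would pin down the shape of a derivation: writing $D$ in the splitting $\fg=\R^6\oplus\R e_7$ and imposing the Leibniz rule on the pairs $(x,y)\in\fh\times\fh$ and $(x,e_7)$, the only bracket in play is $[e_7,x]=Ax$, so the $\fh$-component of the mixed relation becomes the single operative constraint $[D_1,A]=aA$, where $D_1$ is the $\R^6$-block of $D$ and $a$ its $e_7$-eigenvalue; the remaining (off-diagonal) data of $D$ will be annihilated automatically when matched against the block-diagonal $Q_A$.

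Substituting the block forms into $Q_A=cI_7+\tfrac12(D+D^t)$ and using $Q_A^\fh=\tfrac12\bigl([A,A^t]+S_A\circ_6 S_A\bigr)$, the off-diagonal blocks vanish, the $(7,7)$-entry gives the scalar relation $q_A=c+a$, and the $\R^6$-block gives
\begin{equation*}
[A,A^t]+S_A\circ_6 S_A=2cI_6+\bigl(D_1+D_1^t\bigr),\qquad [D_1,A]=aA.
\end{equation*}
Writing $d:=a=q_A-c$ already reproduces the structure of \eqref{semi_algebraic_eq}, and it remains to identify the scalars. The key step is to pair the $\R^6$-block equation with the symmetric matrix $[A,A^t]$: since $\langle D_1+D_1^t,[A,A^t]\rangle=2\langle D_1,[A,A^t]\rangle=2\tr\bigl([D_1,A]A^t\bigr)=2d\,|A|^2$ and $\langle I_6,[A,A^t]\rangle=\tr[A,A^t]=0$, the pairing collapses to $|[A,A^t]|^2+\langle S_A\circ_6 S_A,[A,A^t]\rangle=2d\,|A|^2$, which is the asserted value of $d$; then $c=q_A-d$ together with the expression for $q_A$ in \eqref{homogeneous_Q} yields $c$. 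The converse is immediate: given $D_1$ with $[D_1,A]=dA$ solving \eqref{semi_algebraic_eq}, the block matrix $D=\operatorname{diag}(D_1,d)$ lies in $\der(\fg,A)$ and rebuilds $Q_A=cI_7+\tfrac12(D+D^t)$.

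For part (i) I would specialize to $D=Q_A-cI_7$, which is symmetric, so that the algebraic requirement $D^t\in\der(\fg,A)$ is equivalent to $D\in\der(\fg,A)$; being block diagonal, this reduces to $[Q_A^\fh-cI_6,A]=(q_A-c)A$, i.e. $[[A,A^t]+S_A\circ_6 S_A,A]=2d\,A$, which is \eqref{eq: algebraic_soliton-condition}. Pairing this identity with $A$ and using $\langle[Q_A^\fh,A],A\rangle=\langle Q_A^\fh,[A,A^t]\rangle$ recovers the eigenvalue $2d$ and hence the value of $c$ in \eqref{constant_c}.

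The main obstacle I anticipate is not depth but bookkeeping: correctly isolating $\der(\fg,A)$ and checking that the off-diagonal part of $D$ drops out against the block-diagonal $Q_A$, and above all the pairing identity $\langle D_1,[A,A^t]\rangle=\tr([D_1,A]A^t)=d\,|A|^2$, which is exactly the device that converts the matrix equation into the scalar formula for $d$. The supporting trace facts $\tr[A,A^t]=0$ and $\tr(S_A\circ_6 S_A)=0$ (the latter from Lemma \ref{S_product_S}) keep the $I_6$-coefficient consistent, and everything else is routine manipulation of the block structure.
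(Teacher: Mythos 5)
Your proposal is correct and follows essentially the same route as the paper: block-decompose $Q_A=cI_7+\tfrac12(D+D^t)$ against Proposition \ref{coflow_symbol}, read off $[D_1,A]=dA$ and $q_A=c+d$, and extract $d$ by the pairing $\langle[D_1,A],A\rangle=\tr([D_1,A]A^t)=d|A|^2$ (equivalently $\langle D_1,[A,A^t]\rangle=d|A|^2$), which is exactly the device the paper uses. Your treatment is slightly more explicit than the paper's about the structure of $\der(\fg,A)$ and why the off-diagonal data of $D$ drops out, and as a bonus your bookkeeping shows the coefficient of $I_6$ in \eqref{semi_algebraic_eq} should carry $+\tfrac12(\tr JA)^2$ to be consistent with \eqref{eq: constant_c_semialgebraic}.
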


\begin{proof}
\begin{itemize}
    \item[(i)]
	Suppose that $(\fg, A,\varphi)$ is an algebraic soliton i.e. $Q_A=cI+D$ for $c\in \mathbb{R}$ and $D\in\der(\fg,A)$. Then, 
	$$
	De_7=de_7 \qforq \text{some} \quad d\in \mathbb{R} \qandq [Q_A^\fh,A]=[D|_{\mathfrak{h}},A]=dA.
	$$ 
	Thus, by Proposition \ref{coflow_symbol} we get
	$$
	  [[A,A^t],A]+[S_A\circ_6 S_A,A]=2dA.
	$$
	Taking the inner product between $A$ and the above equation  we obtain 
	$$
	d=\frac{|[A,A^t]|^2+\langle S_A\circ_6 S_A,[A,A^t]\rangle}{2|A|^2}
	$$
	The converse follows by taking $D=Q_A-cI\in \der(\fg, A)$ and
	$$
	  c=q-d=-\frac{1}{2}\Big(\tr S_A^2+\frac{1}{2}(\tr JA)^2+\frac{|[A,A^t]|^2}{|A|^2}+\frac{\langle S_A\circ_6 S_A,[A,A^t]\rangle}{|A|^2}\Big).
	$$
	 \item[(ii)] Suppose that $(\fg,A,\varphi)$ is a semi algebraic soliton, i.e. $Q_A=cI_7+\frac{1}{2}(D+D^t)$ for some $c\in \mathbb{R}$ and $D\in \der(\fg, A)$. It implies the equations
 $$
 Q_A^\fh=cI_6+\frac{1}{2}(D_1+D_1^t) \qandq q=c+d
 $$
 where
	$$
	De_7=de_7 \qforq d\in \mathbb{R} \qandq [D_1,A]=dA \qwhereq D_1=D|_{\mathfrak{h}}.
	$$
 Since $\langle [D_1,A],A\rangle=\langle A,[D_1^t,A]\rangle$, by Proposition \ref{coflow_symbol} we obtain
 \eqref{semi_algebraic_eq}. The converse follows immediately, and the formulae for $c$ and $d$ are obtained as in (i).
\end{itemize}
\end{proof}

Using the condition \eqref{eq: algebraic_soliton-condition} we describe a class of algebraic solitons.

\begin{cor}\label{skew_sym_algebraic_soliton}
	If $A\in \mathfrak{sp}(\mathbb{R}^6)$ is skew-symmetric then $(\fg,A,\varphi)$ is an algebraic soliton.
\end{cor}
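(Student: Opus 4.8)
The plan is to apply the algebraic soliton criterion \eqref{eq: algebraic_soliton-condition} of Theorem \ref{alge_soliton_prop}(i): rather than constructing a derivation by hand, I would simply check that a skew-symmetric $A$ satisfies that single tensorial identity, after which the theorem delivers the conclusion together with the explicit constant and derivation. The whole argument rests on the observation that skew-symmetry collapses both quantities that appear in \eqref{eq: algebraic_soliton-condition}, namely $[A,A^t]$ and $S_A\circ_6 S_A$.

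First I would record the two vanishing statements. Since $A^t=-A$, the symmetric part is $S_A=\tfrac12(A+A^t)=0$, and because every term of $(S_A\circ_6 S_A)_{ab}=(S_A)_{mn}(S_A)_{pq}\rho^+_{mpa}\rho^+_{nqb}$ carries a factor of $S_A$, we get $S_A\circ_6 S_A=0$. Likewise $[A,A^t]=[A,-A]=-[A,A]=0$. Substituting these into \eqref{eq: algebraic_soliton-condition}, the left-hand side becomes $[[A,A^t]+S_A\circ_6 S_A,A]=[0,A]=0$, while the numerator on the right-hand side is $|[A,A^t]|^2+\langle S_A\circ_6 S_A,[A,A^t]\rangle=0$. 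Hence both sides vanish and the criterion holds. For $A=0$ the structure is abelian and torsion-free, hence trivially an algebraic soliton, so it suffices to treat $A\neq 0$, where the denominator $|A|^2$ in \eqref{eq: algebraic_soliton-condition} is nonzero.

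By Theorem \ref{alge_soliton_prop}(i), $(\fg,A,\varphi)$ is then an algebraic soliton, with constant read off from \eqref{constant_c} as $c=-\tfrac14(\tr JA)^2$ (the terms $\tr S_A^2$, $|[A,A^t]|^2$ and $\langle S_A\circ_6 S_A,[A,A^t]\rangle$ all dropping out) and derivation $D=Q_A-cI_7$. As a consistency check I would verify directly that this $D$ is a derivation of $(\fg,A)$: by Proposition \ref{coflow_symbol} and \eqref{homogeneous_Q} the block $Q_A^{\fh}$ equals $\tfrac12[A,A^t]+\tfrac12 S_A\circ_6 S_A=0$, so $D|_{\fh}=\tfrac14(\tr JA)^2 I_6$ is a scalar multiple of the identity and $De_7=0$; thus $[D|_{\fh},A]=0=d\,A$ with $d=0$, which is exactly the derivation condition $[D_1,A]=dA$ for an almost Abelian Lie algebra.

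There is essentially no analytic difficulty here; the computation is immediate once $S_A=0$ and $[A,A^t]=0$ are noted. The only point that calls for a moment's care is to make sure the soliton characterisation \eqref{eq: algebraic_soliton-condition} is invoked where its denominator is nonzero, which is why I separate off the degenerate case $A=0$ above. Everything else is a direct substitution.
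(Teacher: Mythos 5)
Your proof is correct and follows exactly the route the paper intends: the corollary is stated right after the remark that \eqref{eq: algebraic_soliton-condition} is to be used, and skew-symmetry of $A$ makes $S_A=0$ and $[A,A^t]=0$, so both sides of that criterion vanish identically. Your extra care about the degenerate case $A=0$ and the consistency check that $D=Q_A-cI_7$ is indeed a derivation are sound but not needed beyond what the paper's (omitted) one-line verification requires.
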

Using Lemma \ref{S_circ_S}, we can prove the absence of shrinking (semi-) algebraic solitons for the Laplacian coflow on almost Abelian Lie algebras.

\begin{prop}\label{prop_absences_solitons_steady}
	If $(\fg,A,\varphi)$ is a (semi-) algebraic soliton for the Laplacian coflow then it is expanding, and it is steady if it is torsion-free.
\end{prop}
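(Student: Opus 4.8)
The plan is to show that the soliton constant $c$, related to the soliton parameter by $\lambda=-4c$, is nonpositive and vanishes exactly in the torsion-free case; since \emph{expanding} means $\lambda>0$ and \emph{steady} means $\lambda=0$, this is precisely the assertion. Both the algebraic case \eqref{constant_c} and the semi-algebraic case \eqref{eq: constant_c_semialgebraic} of Theorem \ref{alge_soliton_prop} yield the \emph{same} value
$$
c=-\frac{1}{2}\Big(\tr S_A^2+\frac{1}{2}(\tr JA)^2+\frac{|[A,A^t]|^2}{|A|^2}+\frac{\langle S_A\circ_6 S_A,[A,A^t]\rangle}{|A|^2}\Big),
$$
so it suffices to analyse this single expression. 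Here $|A|^2>0$ because $\varphi$ is non-flat, i.e. $A\neq 0$.

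First I would record a clean reformulation: multiplying by $-2|A|^2$ and comparing with the norm evolution \eqref{evolution_of_the_norm2} (using $\tr S_A^2=|S_A|^2$) shows that $-2c|A|^2$ is exactly the negative of the right-hand side of \eqref{evolution_of_the_norm2}. In other words, a soliton satisfies $\frac{d}{dt}|A|^2=2c|A|^2$, so the sign of $c$ is governed entirely by the monotonicity of $|A|^2$ already used in the long-time existence argument.

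The main obstacle is that $\langle S_A\circ_6 S_A,[A,A^t]\rangle$ is the only indefinite contribution to $c$, so its sign must be controlled. I would dispatch it with the estimate \eqref{CSPP-inequality}, which combines Cauchy–Schwarz, the Peter–Paul inequality, and the identity of Lemma \ref{S_circ_S} for $|S_A\circ_6 S_A|^2$. Substituting this bound and simplifying with $|A|^2=|S_A|^2+\frac14|A-A^t|^2$ collapses $-2c|A|^2$ into a sum of manifestly nonnegative squares,
$$
-2c|A|^2\geq \frac14|S_A|^2|A-A^t|^2+\frac12(\tr JA)^2|A|^2+2|S_A^2|^2+\langle JS_A,S_A\rangle^2\geq 0,
$$
which gives $c\leq 0$, hence $\lambda\geq 0$: no shrinking (semi-)algebraic solitons exist.

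Finally, to pin down the steady case I would read off the equality condition. Steadiness $c=0$ forces every square above to vanish: $|S_A^2|^2=0$ gives $S_A=0$ (as $S_A$ is symmetric), so $A$ is skew-symmetric, and $(\tr JA)^2|A|^2=0$ gives $\tr JA=0$ since $|A|^2>0$. For $A\in\mathfrak{sp}(\R^6)$ skew-symmetry is equivalent to $[J,A]=0$, so by the torsion formula of Proposition \ref{torsion_forms} the full torsion tensor $T$ vanishes; conversely, torsion-freeness forces $[J,A]=0$ and $\tr JA=0$, whence $A\in\su(3)$, $S_A=0$, and every square vanishes, returning $c=0$. Thus the soliton is steady precisely when $\varphi$ is torsion-free, and expanding otherwise.
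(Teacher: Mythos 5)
Your proposal is correct and follows essentially the same route as the paper: bound $c$ from the formula in Theorem \ref{alge_soliton_prop} using the Cauchy--Schwarz/Peter--Paul estimate \eqref{CSPP-inequality} together with Lemma \ref{S_circ_S}, rewrite $-2c|A|^2$ as a sum of nonnegative squares, and read off the equality case to identify steadiness with $S_A=0$, $\tr JA=0$, hence $T=0$. The only (welcome) additions are your observation that a soliton satisfies $\frac{d}{dt}|A|^2=2c|A|^2$ and your explicit verification of the converse implication (torsion-free $\Rightarrow$ steady), which the paper's proof leaves implicit.
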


\begin{proof}
Using the inequality \eqref{CSPP-inequality} in the equation \eqref{eq: constant_c_semialgebraic}, we have	
\begin{align*}
	2c
      \leq&-\Big(\tr S_A^2+\frac{1}{2}(\tr JA)^2+\frac{|[A,A^t]|^2}{|A|^2}\Big)+\frac{1}{|A|^2}\left(|S_A|^2|S_A|^2-2|S_A^2|^2-\langle JS_A,S_A\rangle^2+|[A,A^t]|^2\right)\\
      \leq&-\frac{1}{|A|^2}\Big(|S_A|^2(|A|^2-|S_A|^2)+\frac{1}{2}(\tr JA)^2|A|^2+2|S_A^2|^2+\langle JS_A,S_A\rangle^2\Big)\leq 0.
\end{align*}
If $c=0$ then 
$$
\tr JA=0 \qandq  S_A^2=0.
$$
 In particular $S_A=0$, and thus $A$ is skew-symmetric. And since $A\in \mathfrak{sp}(\R^6)$ it implies that $[J,A]=0$. Therefore, by equation \eqref{eq: torsion_tensor_A} we get that the full torsion tensor $T$ vanishes.
\end{proof}

\begin{remark}
We remark that the previous proposition was prove in \cite{Fino2018}*{Corollary 4.4} for the context of algebraic solitons and assuming that $A$ is normal.
\end{remark}
We conclude this section with an example of a semi-algebraic soliton which is not an algebraic one.

\begin{example}\label{ex: semi_algebraic_example}
Let $(\fg,A,\varphi)$ be an almost Abelian Lie algebra with $\gt$-structure $\varphi=\omega\wedge e^7+\rho^+$, where
\begin{equation*}
    \omega=e^{14}+e^{25}+e^{36} \qandq \rho^+=e^{123}-e^{156}+e^{246}-e^{345},
\end{equation*}
and the Lie bracket is given by the $3$-step nilpotent matrix 
	$$
	A=\left(\begin{array}{c|c}
	0 & B\\ \hline
	C & 0
	\end{array}\right)\in \mathfrak{sp}(\mathbb{R}^6), \quad B=\begin{pmatrix} 
	0 & 0 & 0\\
	0 & 0 & 1\\
	0 & 1 & 0
	\end{pmatrix}, \quad C=\begin{pmatrix}
	0 & \sqrt{2} & 0\\
	\sqrt{2} & 0 & 0\\
	0 & 0 & 0
	\end{pmatrix}.
	$$
	We have that the matrix
	$$
	D=\begin{pmatrix}
	D_1 & 0 \\
	0 & d
	\end{pmatrix}, \quad D_1=\left(\begin{array}{ccc|ccc}
	2 & 0 & 0 & & & \\
	0 & 2 & 0 & & & \\
	-\sqrt{2} & 0 & 4 & & & \\ \hline
	& & & 3 & 0 & \sqrt{2} \\
	& & & 0 & 3 & 0 \\
	& & & 0 & 0 & 1 
	\end{array}\right), \quad d=1,
	$$
	satisfies the relation $[D_1,A]=A$, it means that $D\in \der(\fg,A)$.  Now, for each term of \eqref{homogeneous_Q}, we obtain
	$$
	  [A,A^t]=\left(\begin{array}{ccc|ccc}
	  -2 & 0 & 0 & & & \\
	  0 & -1 & 0 & & & \\
	  0 & 0 & 1 & & & \\ \hline
	  & & & 2 & 0 & 0 \\
	  & & & 0 & 1 & 0\\
	  & & & 0 & 0 & -1
	  \end{array}\right), \quad S_A\circ_6 S_A=\left(\begin{array}{ccc|ccc}
	  1 & 0 & -\sqrt{2} & & & \\
	  0 & 0 & 0 & & & \\
	  -\sqrt{2} & 0 & 2 & & & \\ \hline
	  & & & -1 & 0 & \sqrt{2} \\
	  & & & 0 & 0 & 0\\
	  & & & \sqrt{2} & 0 & -2
	  \end{array}\right)
	$$
	and 
	$$\tr S_A^2= 3, \quad \tr JA=0, \quad |[A,A^t]|^2=12, \quad |A|^2=6 \qandq \langle S_A\circ_6 S_A,[A,A^t]\rangle=0.
	$$ 
	Since the matrices $A$ and $D$ satisfy the equation \eqref{semi_algebraic_eq}, we have that $(\fg,A,\varphi)$ is a semi-algebraic soliton with 
	$$
	  Q_A=-\frac{5}{2}I+\frac{1}{2}(D+D^t).
	$$
	Notice that $[D_1^t,A]\neq A$, so $D^t\notin \der(\fg,A)$ thus $(\fg,A,\varphi)$ is not an algebraic soliton. According to \eqref{eq: Auto_similar_sol_bracket}, the associated bracket flow solution is
 $$
A(t)=(1+5t)^{-1/2}e^{s(t)E}Ae^{-s(t)E}=(1+5t)^{-1/2}\left(\cos\frac{s(t)}{\sqrt{2}}A+\sin\frac{s(t)}{\sqrt{2}}A^\perp\right)
 $$
where 
$$
E=\frac12(D-D^t)=\frac{1}{\sqrt{2}}\left(\begin{array}{c|c|c}
  E_1& 0 & \\ \hline
	0 & E_1 & \\ \hline
 & & 0
\end{array}\right), \quad E_1=\left(\begin{array}{ccc}
    0 & 0 & 1  \\
	0 & 0 & 0 \\
	-1 & 0 & 0 
\end{array}\right)
$$
and
$$
A^\perp=\left(\begin{array}{c|c}
	0 & B'\\ \hline
	C' & 0
	\end{array}\right) \quad B'=\begin{pmatrix} 
	0 & -1 & 0\\
	-1 & 0 & 0\\
	0 & 0 & 0
	\end{pmatrix}, \quad C'=\begin{pmatrix}
	0 & 0 & 0\\
	0 & 0 & \sqrt{2}\\
	0 & \sqrt{2} & 0
	\end{pmatrix}.
$$
As in \cite{Lauret2017}*{Example 5.28}, we obtain that $A(t)/|A(t)|$ runs on a circle and $A(t)$ converges to zero rounding in a cone.
\end{example}

\appendix
\section{Contraction of $\gt$ and $\SU(3)$-identities}
 
\label{sect. Contraction}

Let $\varphi$ be a $\gt$-structure with Hodge dual $4$-form $\psi$ and induced $\SU(3)$-structure $(\omega,\rho^++i\rho^-)\in \Lambda^2(\R^6)^*\oplus\Lambda^3(\C^3)^*.$
From \cite{Karigiannis2007}*{\S A.3} and \cite{bedulli2007}*{\S 2.2}, we gather the following contraction identities for $\gt$ and $\SU(3)$-structures, respectively.

Contractions of $\varphi$ with $\varphi$: 
\begin{align}
    \varphi_{abc}\varphi^{abc}&=  42,\\
    \varphi_{abj}\varphi^{ab}_{\,\,\,k}
    &=  6g_{jk},\label{Eq.contraction.var.var.2 index}\\ 
    \varphi_{apq}\varphi^{a}_{\,\,jk}
    &=  g_{pj}g_{qk}-g_{pk}g_{qj}+\psi_{pqjk}.
    \label{Eq.contraction.var.var. 1 index}
\end{align}
Contractions of $\varphi$ with $\psi$:
\begin{align}
    \varphi_{ijk}\psi_{a}^{\,\,ijk}
    &=  0,\nonumber\\
    \varphi_{ijq}\psi^{ij}_{\,\,\,kl}&=  4\varphi_{qkl}, \label{Eq.contraction.var.psi.2 index}\\
    \varphi_{ipq}\psi^i_{\,\,jkl}
    &=  g_{pj}\varphi_{qkl}-g_{jq}\varphi_{pkl}+g_{pk}\varphi_{jql}\nonumber\\
    & -g_{kq}\varphi_{jpl}+g_{pl}\varphi_{jkq}-g_{lq}\varphi_{jkp}.
    \label{Eq.contraction.var..psi.1 index}
\end{align}
Contractions of $\psi$ with $\psi$:
\begin{align}
    \psi_{abcd}\psi^{ab}_{\,\,\,\,mn}
    &=  4g_{cm}g_{dn}-4g_{cn}g_{dm}+2\psi_{abmn},\label{Eq.contraction.psi.psi.2 index}\\
    \psi_{abcd}\psi_m^{\,\,\,bcd}
    &= 24g_{am},\label{Eq.contraction.psi.ps. 3 index}\\
    \psi_{abcd}\psi^{abcd}
    &= 168\nonumber
\end{align}
Contractions of $\omega$ with $\omega$ and $\rho^\pm$ with $\omega$:
\begin{align}\label{SU3_identities}
\omega_{ip}{\omega^p}_{j}=-\delta_{ij},\quad \rho^+_{iab}\omega^{ab}&=0, \quad  \rho^+_{ijp}{\omega^p}_{k}=\rho^-_{ijk}, \quad \rho^-_{ijp}{\omega^p}_{k}=-\rho^+_{ijk}.
\end{align}
Contractions of $\rho^\pm$ with $\rho^\pm$:
\begin{align}
\label{eq: rho_rho_2-index} 
\rho^+_{ipq}{\rho^-_j}^{pq}&=4\omega_{ij},\quad
\rho^+_{ipq}{\rho^+_j}^{pq}=4\delta_{ij}=\rho^-_{ipq}{\rho^-_j}^{pq},\\ \label{eq: rho-rho+}
\rho^-_{ijp}{\rho^+_{kl}}^p&=-\omega_{ik}\delta_{jl}+\omega_{jk}\delta_{il}+\omega_{il}\delta_{jk}-\omega_{jl}\delta_{ik},\\ \label{eq: rho+rho+}
\rho^+_{ijp}{\rho^+_{kl}}^p&=-\omega_{ik}\omega_{jl}+\omega_{il}\omega_{jk}+\delta_{ik}\delta_{jl}-\delta_{jk}\delta_{il}=\rho^-_{ijp}{\rho^-_{kl}}^p.\end{align}

\addcontentsline{toc}{section}{References}
\bibliography{Bibliografia-2020-07}

\Addresses
	
\end{document}